\numberwithin{equation}{section}
\newtheorem{theorem}{Theorem}[section]
\newtheorem{cor}[theorem]{Corollary}
\newtheorem{lem}[theorem]{Lemma}
\theoremstyle{definition}
\theoremstyle{remark}
\newtheorem{rem}[theorem]{Remark}
\numberwithin{equation}{section}
\definecolor{red}{rgb}{1.0, 0.0, 0.0}
\newcommand{\Bea}{\begin{eqnarray*}}
	\newcommand{\Eea}{\end{eqnarray*}}
\newcommand{\Be} {\begin{equation*}}
	\newcommand{\Ee} {\end{equation*}}
\newcommand{\be} {\begin{equation}}
	\newcommand{\ee} {\end{equation}}
\newcommand{\bea} {\begin{eqnarray}}
	\newcommand{\eea} {\end{eqnarray}}
\newcommand{\R}{\mathbb R}
\newcommand{\K}{\kappa }
\newcounter{alphabet}
\newcommand{\bysame}{\leavevmode\hbox to3em{\hrulefill}\,}
\title[Strichartz estimates for higher order Schr\"odinger equations]
{Strichartz estimates for higher order Schr\"odinger equations with  Partial regular initial data}
\author{Vishvesh Kumar} 
\address{Vishvesh Kumar  \endgraf Department of Mathematics: Analysis, Logic and Discrete Mathematics	\endgraf Ghent University \endgraf Krijgslaan 281, Building S8,	B 9000 Ghent, Belgium.} \email{Vishvesh.Kumar@UGent.be and vishveshmishra@gmail.com}
\author{Shyam Swarup Mondal} \address{Shyam Swarup Mondal    \endgraf Stat-Math Unit\endgraf Indian Statistical Institute  Kolkata \endgraf BT Road,  Baranagar, Kolkata  700108, India } \email{mondalshyam055@gmail.com}
\author{Iswarya Sitiraju} \address{Iswarya Sitiraju  \endgraf Department of Mathematics \endgraf Indian Institute of Technology Bombay \endgraf  Powai, Mumbai, Maharashtra 40007,6 India } \email{isitir1@lsu.edu and iswaryasitiraju1@gmail.com}
\author{Manli Song}
\address{Manli Song \endgraf School of Mathematics and Statistics, \endgraf Northwestern Polytechnical University, \endgraf Xi'an, Shaanxi 710129, China}
\email{mlsong@nwpu.edu.cn}
\keywords{Strichartz estimates, Schr\"odinger equation, Partial regular data, Wellposedness, Dunkl operator, Oscillatory integrals} \subjclass[2020]{Primary 35E15, 42C10; Secondary 35Q55, 35L70, 35B65}
 \date{\today}
\begin{document}
	\allowdisplaybreaks
	
	\begin{abstract}
		In this paper, we establish refined Strichartz estimates for higher-order Schr\"odinger equations with initial data exhibiting partial regularity. By partial regularity, we mean that the initial data are not required to have full Sobolev regularity but only regularity with respect to a subset of the spatial variables. As an application of these estimates, we investigate the well-posedness of nonlinear Schr\"odinger equations with power-type nonlinearities.

In addition, we extend our analysis to the Dunkl–Schr\"odinger equations under partial regularity, defined with respect to two distinct root systems. This extension poses significant challenges, mainly due to the lack of a suitable stationary phase method in the Dunkl setting. To overcome this difficulty, we develop a new result that provides an adaptation of the stationary phase method to the framework of Dunkl analysis.

	\end{abstract}
	\maketitle
	\tableofcontents 	
	
	\section{Introduction and main results}
	This paper aims to investigate Strichartz estimates and their application for a class of dispersive equations associated with the Laplacian and the Dunkl Laplacian on $\mathbb{R}^d$ with power-type nonlinearities. Let us begin our discussion by considering a class of dispersive equations on $\mathbb{R}^d:$
\begin{align}\label{eq10}
		\left\{\begin{aligned}
			i \partial_t u(t, x)+\Phi(D) u(t, x) & =F_p(u)(t, x), \quad (t, x) \in \mathbb{R}_+ \times \mathbb{R}^{d}, \\
			u(0, x) & =f(x),
		\end{aligned}\right.
	\end{align}
	where $\Phi(D)$ denotes the Fourier (or Dunkl-Fourier) multiplier operator associated with the real-valued function (also known as multiplier) $\Phi$. The function $\Phi$ is assumed to satisfy certain specific, yet sufficiently general conditions that encompass several important dispersive equations discussed below.     Regarding the nonlinearity $F_p$, we assume that it belongs to the class $C^1$ and satisfies the growth condition with exponent $p > 1$:
	\begin{equation} \label{nonlin}
	    \left|F_p(u)\right| \lesssim|u|^p \quad \text { and } \quad|u|\left|F_p^{\prime}(u)\right| \sim\left|F_p(u)\right| .
	\end{equation}
	Typical examples of such nonlinearities include the standard forms $\pm |u|^{p-1}u$ and $\pm |u|^p$.

Our initial focus will be on equation \eqref{eq10} in the Euclidean setting, where $\Phi(D)$ denotes a Fourier multiplier on $\mathbb{R}^d$. We will revisit and extend the discussion to the Dunkl framework in a subsequent section. 	Equation \eqref{eq10} serves as a unifying framework that includes several fundamental dispersive equations, depending on the specific choice of the Fourier multiplier $\Phi$. For instance, when $\Phi(\xi) = |\xi|^2$, the solution $u$ to \eqref{eq10} corresponds to the classical Schrödinger evolution $e^{-i t \Delta} f$. If $\Phi(\xi) = |\xi|$, the equation models the linear wave propagation through the operator $ ^{i t \sqrt{-\Delta}} f$.  More generally, for $\Phi(\xi) = |\xi|^m$ with $m > 1$, the solution describes the fractional Schrödinger flow given by $e^{i t (-\Delta)^{m/2}} f$. These examples illustrate the flexibility of the formulation in capturing a broad class of dispersive dynamics through appropriate choices of $\Phi$.

	It is well known that equation \eqref{eq10} with initial data $f \in \dot{H}^s(\mathbb{R}^{d})$, when coupled with the nonlinearities $\pm |u|^{p-1}u$ or $\pm |u|^p$, exhibits scaling invariance due to the homogeneity of the symbol $\Phi$. This invariance under suitable scaling transformations reflects the balance between the linear dispersive term and the nonlinear interaction, a feature that plays a crucial role in the study of criticality and well-posedness. Indeed, we first note that if $u(t,x)$ is a solution of \eqref{eq10}, then so is	
	\begin{equation*}	
		u_{\delta}(t, x)= \delta^{\frac{m}{p-1}} u(\delta^{m} t, \delta x), \quad \delta>0,
	\end{equation*}	 
	with  rescaled initial data $f_{\delta}(x)=u_{\delta}(0, x)$. Now, we observe that 
	\begin{equation}\label{scaling}
		\| f_{\delta}\|_{\dot{H}^s(\mathbb{R}^{d})} =\delta^{\frac{m}{p-1}+s-\frac{d}{2}} \|f\|_{ \dot{H}^s(\mathbb{R}^{d})}.
	\end{equation}
	The scaling relation \eqref{scaling} plays a key role in identifying the critical exponent $p$ (or, equivalently, the scale-invariant Sobolev space $\dot{H}^{s_c}$), given by
	$$
	p=p_c(d, s)=1+\frac{2m}{d-2s}, \quad \text{alternately}\,\, s_c= \frac{d}{2}-\frac{m}{p-1}.
	$$
	
    This critical exponent marks the threshold at which the interplay between the linear dispersion and the nonlinearity changes. It distinguishes between different regimes of solution behavior: when $p > p_c(d, s)$, the problem is said to be supercritical; when $p < p_c(d, s)$, it is subcritical. The critical exponent serves as a dividing line between the existence of global-in-time Sobolev solutions (in the subcritical and critical cases) and the potential for finite-time blow-up of weak solutions, even with small initial data, in the supercritical case.
	
	The subcritical regime $p < p_c(d, s) = 1 + \frac{2m}{d - 2s}$ reveals that higher regularity of the initial data, corresponding to larger Sobolev indices $s$, permits a wider range of admissible nonlinear exponents $p$. This reflects the enhanced control over the nonlinearity afforded by smoother initial profiles. 

Our primary prototype for the symbol $\Phi$ is given by $\Phi(\xi) = |\xi|^m$ with $m > 1$. In this case, equation \eqref{eq10} reduces to the higher-order  Schr\"odinger equation, specifically the Cauchy problem:
	\begin{align}\label{higher order}
		\begin{cases}
i\partial_t u (t, x) + (-\Delta)^{m/2} u (t, x) = F_p(u)(t, x), \\
u(0, x) = f(x),
\end{cases}
	\end{align}
    for $(t, x) \in \mathbb{R}_+ \times \mathbb{R}^d.$ Here $(-\Delta)^{m/2}$ denotes the pseudo-differential operator given by 
    $$(-\Delta)^{m/2} f(x):= \int_{\mathbb{R}^d} e^{i x\xi} |\xi|^m \widehat{f}(\xi) \,d\xi$$
    with the Euclidean Fourier transform  $\widehat{f}(\xi):= \frac{1}{(2\pi)^d} \int_{\mathbb{R}^d} e^{-i x\xi} f(x) dx.$

The higher-order Schr\"odinger equation with $m > 2$ has garnered significant interest in mathematical physics. In particular, when $m = 4$, equation \eqref{higher order} arises in the study of the formation and propagation of intense laser beams in bulk media; see \cite{Ka96, KS00}.  Fractional-order cases with $1 < m < 2$ are also of considerable importance, especially in light of the development of fractional quantum mechanics by Laskin \cite{La00}, where the fractional Schr\"odinger equation was introduced as a generalization of the classical model. This formulation stems from an extension of the Feynman path integral from Brownian-like to L\'evy-like quantum mechanical paths (see \cite{La00, La02}). In this framework, it is conjectured that certain quantum phenomena may be accurately captured only within these fractional regimes. Additionally, the fractional Schr\"odinger equation appears in water wave models (see \cite{Waterwaveref}), further highlighting its interdisciplinary relevance. When $m \in [2, \infty)$, the nonlinear fractional Schr\"odinger equation  generalizes both the standard nonlinear Schr\"odinger equation ($m = 2$) and the fourth-order nonlinear Schr\"odinger equation ($m = 4$); see, for example, \cite{Paus07}.

In the classical case $m = 2$, equation \eqref{higher order} reduces to the standard nonlinear Schr\"odinger equation on $\mathbb{R}^d$, a model that arises naturally in quantum mechanics. A central aspect in the analysis of such equations is understanding the space-time behavior of solutions to the associated free linear problem, $u(t, x) = e^{  -it\Delta} f(x)$. These estimates form the backbone of well-posedness and scattering theory for nonlinear Schr\"odinger equations; see \cite{GV85, GV92, KT98, St77, Ka96} for foundational results. Among these, the Strichartz estimates play a particularly crucial role. In the classical setting, Strichartz \cite{St77} was the first to establish such estimates in the mixed-norm framework $L^q_{t,x}(\mathbb{R}^{d+1})$. Since then, they have become a fundamental tool in the study of nonlinear dispersive equations; see \cite{Bahouri-app} and references therein.

For the higher-order Schr\"odinger equation \eqref{higher order}, analogous estimates have been derived. In particular, \cite{CHL11} established that for $m>2, q, r \geq 2$, $r \neq \infty$, satisfying the scaling condition $\frac{m}{q} + \frac{d}{r} = \frac{d}{2}$, one has
\begin{align} \label{strhighor}
\left\|e^{it (-\Delta)^{\frac{m}{2}}}f\right\|_{L_t^q\left(\mathbb{R}; L_x^r\left(\mathbb{R}^{d}\right)\right)} \lesssim \|f\|_{L^2(\mathbb{R}^{d})}.
\end{align}
Such estimates are indispensable in extending the classical theory of nonlinear Schr\"odinger equations to higher-order and more general settings. Strichartz estimates for  the higher-order in different frameworks  and their application in the nonlinear PDEs can be found in \cite{KS17, KS00, Ka96, CHL11}. Based on the Strichartz estimate \eqref{strhighor}, the local well-posedness of \eqref{higher order} has been discussed in  \cite{CHL11, HS15, Dinh18, Dinh19} with initial data from Sobolev spaces $H^s(\mathbb{R}^d)$ and $\dot{H}^s(\mathbb{R}^d).$ Particularly in the subcritical case, that is $p<p_c(d, s)=1+\frac{2m}{d-2s}$, one could see that the more one assumes regularity s of initial data, the wider a possible range of $p$ in the power type nonlinearity.

The main objective of this paper is to investigate the local well-posedness of the nonlinear Schr\"odinger equation \eqref{higher order} associated with higher-order operators on $\mathbb{R}^d$, when the initial data does not possess full regularity in the classical Sobolev space $H^s(\mathbb{R}^d)$, but instead lies in a partially regular space of the form $L^2(\mathbb{R}^{d-k}; H^{s} (\mathbb{R}^k))$ for some $1 \leq k \leq d$. Recent progress in this direction was made by Koh, Lee, and Seo \cite{KLS23}, who studied local well-posedness of nonlinear Schr\"odinger and wave equations under partial regularity assumptions. Certain nonlinear Schr\"dinger equations exhibit partial off-axis variations, which naturally motivates the consideration of initial data with only partial regularity in the analytical study of such models; see \cite{AA, Ak}.
 Let us briefly explain the motivation behind this type of study.

We decompose the spatial variable $X \in \mathbb{R}^d$ as $(x, y) \in \mathbb{R}^{d-k} \times \mathbb{R}^k$, and assume that the initial data belongs to the mixed Sobolev space $L^2(\mathbb{R}^{d-k}; {\dot{H}^s(\mathbb{R}^k))}$ rather than the full space {$\dot{H}^s(\mathbb{R}^d)$}.  Under this assumption, one can still observe the same scaling behavior as in the fully regular case. More precisely, for a rescaled initial datum $f_\delta$, we have
$$\| f_{\delta}\|_{L_x^2 \dot{H}_y^s} =  \delta^{\frac{m}{p-1} + s - \frac{d}{2}} \|f\|_{L_x^2 \dot{H}_y^s},$$
where the scaling exponent $ \frac{m}{p-1} + s - \frac{d}{2}$ is identical to that in the full regularity setting \eqref{scaling}, independent of the choice of $k$.

This observation suggests that the critical exponent $p$ for well-posedness remains unchanged even under partial regularity assumptions. Consequently, it motivates the development of a well-posedness theory in lower-regularity frameworks, thereby extending the classical theory to a broader class of initial data. It is worth noting that   $L^2 (\mathbb{R}^{d-k}; H^s (\mathbb{R}^k))$ is bigger than that of $H^s(\mathbb{R}^{d})$  in the sense that $1\leq k_1 < k_2 <d$,
	$$
	H^s(\mathbb{R}^{d}) \subsetneq L^2 (\mathbb{R}^{d-k_2}; H^s (\mathbb{R}^{k_2})) \subsetneq L^2 (\mathbb{R}^{d-k_1}; H^s (\mathbb{R}^{k_1})).
	$$ We refer to \cite{KLS23} for more details on this topic.

	As a natural first step toward developing a well-posedness theory for the nonlinear Schrödinger equation \eqref{higher order} under the relaxed partial regularity assumptions discussed above, we begin by establishing Strichartz estimates for the associated free Schrödinger propagator $e^{it (-\Delta)^{\frac{m}{2}}}$. 
    
    More generally, we derive refined Strichartz estimates for the generalized dispersive propagator $e^{it \Phi(D)}$ arising from the Cauchy problem associated with a broader class of Fourier multipliers $\Phi$:

    \begin{align}\label{eq10homo}
		\left\{\begin{aligned}
			i \partial_t u(t, x)+\Phi(D) u(t, x) & =0, \quad (t, x) \in \mathbb{R} \times \mathbb{R}^{d}, \\
			u(0, x) & =f(x),
		\end{aligned}\right.
	\end{align}
    under the following  assumptions {\bf(A)} on $\Phi.$  

    \textbf{Assumptions (A):} We assume that $\Phi \in C^{\infty}\left(\mathbb{R}^{d} \backslash\{0\}\right)$ is a real-valued function satisfying the following conditions:
	\begin{enumerate}
		\item  $|\nabla \Phi(\xi, \eta)| \neq 0$ for all $(\xi, \eta) \in \mathbb{R}^{d-k} \times \mathbb{R}^{k} \backslash\{0\}$.
		\item  There is a constant $\mu \geq 1$ such that $\mu^{-1} \leq|\Phi(\xi, \eta)| \leq \mu$ for any $\xi$ with $|(\xi, \eta)|=1$.
		\item There is a constant $m \geq 1$ such that $\Phi(\lambda (\xi, \eta))=\lambda^m \Phi(\xi, \eta)$ for all $\lambda>0$ and all $(\xi, \eta) \in \mathbb{R}^{d-k} \times \mathbb{R}^{k} \backslash\{0\}$.
		\item The rank of the Hessian $H{\Phi}(\xi, \eta)\geq  M\geq k \geq 1$ for all $(\xi, \eta) \in \mathbb{R}^{d-k} \times \mathbb{R}^{k} \backslash\{0\}$.
		\item   The rank of the Hessian $H{\Phi}(\xi, \eta) \geq M-k,$ for all $(\xi, \eta) \in \mathbb{R}^{d-k} \times \mathbb{R}^{k} \backslash\{0\}$  with fixed $|\eta| \leq 2$.
	\end{enumerate}
It is worth mentioning that
\begin{itemize}
    \item Condition (3) is specifically needed for the frequency-localized estimate arising in the Littlewood–Paley decomposition.
  
    \item The remaining conditions are necessary to control the oscillatory integral $$ \int_{\mathbb{R}^{d-k} \times \mathbb{R}^k} e^{-i(x\cdot\xi+y\cdot \eta)+it\Phi(\xi, \eta)}\Psi(\xi, \eta) \;d\xi\, d\eta$$  in terms of time.
\end{itemize}

    The following theorem about refined Strichartz estimates for the generalized dispersive propagator $e^{it \Phi(D)}$ is one of the main results of this paper. 
	\begin{theorem} \label{them1.1}
		Let $d \geq 1$ and $1 \leq k \leq d$. Let $\Phi$ be a real-valued smooth function satisfying Assumptions (A).  If
		\begin{align*} 
			\frac{2}{q} \leq(M-k)\left(\frac{1}{2}-\frac{1}{r}\right)+k\left(\frac{1}{2}-\frac{1}{\widetilde{r}}\right), \quad 2 \leq \widetilde{r} \leq r<\infty, \quad 2<q \leq \infty. 
		\end{align*}
		Then, we have
		\begin{align}\label{eq1inr}
			\left\|e^{i t\Phi(D)} f\right\|_{L_t^q\left(\mathbb{R} ; L_x^r\left(\mathbb{R}^{d-k} ; L_y^{\tilde{r}}\left(\mathbb{R}^k\right)\right)\right)} \lesssim\|f\|_{\dot{H}_{x, y}^s},
		\end{align}
		under the scaling condition
		\begin{align*} 
		\frac{{{m}}}{q}=-s +(d-k)\left(\frac{1}{2}-\frac{1}{r}\right)+k\left(\frac{1}{2}-\frac{1}{\widetilde{r}}\right).
		\end{align*}
		
	\end{theorem}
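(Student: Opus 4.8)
The plan is to follow the standard Keel–Tao machinery adapted to the mixed-norm, partial-regularity setting, with the dispersive decay coming from a frequency-localized oscillatory integral estimate. First I would perform a Littlewood–Paley decomposition in the full frequency variable $(\xi,\eta)$, reducing \eqref{eq1inr} (via the scaling homogeneity in Assumption (A)(3) and the usual Littlewood–Paley square-function argument, which is admissible since $r,\widetilde r<\infty$ and the spaces are mixed $L^p$ with finite exponents) to a single dyadic piece $f$ with $\widehat f$ supported where $|(\xi,\eta)|\sim 1$. For this localized piece the claim becomes a scale-invariant estimate $\|e^{it\Phi(D)}P_1 f\|_{L^q_t L^r_x L^{\widetilde r}_y}\lesssim \|f\|_{L^2}$ under the stated relation among the exponents; rescaling back and summing the dyadic pieces recovers the homogeneous Sobolev norm on the right, the scaling condition being exactly what makes the $\delta$-powers cancel after summation.

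Next, for the localized propagator I would prove the two endpoint ingredients needed for a $TT^*$/Keel–Tao argument: the trivial $L^2_{x,y}\to L^2_{x,y}$ bound (immediate by Plancherel, since $\Phi$ is real-valued, hence $e^{it\Phi(D)}$ is unitary), and a dispersive decay estimate for the kernel
$$
K_t(x,y)=\int_{\mathbb{R}^{d-k}\times\mathbb{R}^k} e^{-i(x\cdot\xi+y\cdot\eta)+it\Phi(\xi,\eta)}\,\Psi(\xi,\eta)\,d\xi\,d\eta,
$$
where $\Psi$ is a smooth bump adapted to the annulus $|(\xi,\eta)|\sim 1$. Here the point of the hypotheses on the Hessian is that they give \emph{two different} stationary-phase gains: using Assumption (A)(4), the rank of $H\Phi$ is at least $M-k$ on the full cone, so one obtains the full bound $|K_t(x,y)|\lesssim |t|^{-(M-k)/2}$ (in the $(x,y)$ variables jointly) by the van der Corput / stationary phase estimate for phases with Hessian of rank $\geq M-k$; separately, freezing $|\eta|\lesssim 2$ and using Assumption (A)(5) gives rank $\geq M-k$ \emph{in the $\xi$ directions only} — wait, more precisely one must split the gain so that one extracts $|t|^{-k/2}$ in the $y$-variable on the slice where $\eta$ is localized, while keeping $|t|^{-(M-k)/2}$ in the $x$-variable. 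The mixed decay I am after is
$$
\|e^{it\Phi(D)}P_1 f\|_{L^\infty_x L^\infty_y}\lesssim |t|^{-\frac{M-k}{2}}\,\|f\|_{L^1_x L^1_y}
\quad\text{combined with}\quad
\|e^{it\Phi(D)}P_1 f\|_{L^\infty_x L^2_y}\lesssim |t|^{-\frac{M-k}{2}}\|f\|_{L^1_x L^2_y},
$$
so that interpolating in the $y$-variable alone (fixing $x$) between the two, and then running the $x$-variable interpolation, produces the mixed-norm dispersive estimate $\|e^{it\Phi(D)}P_1 f\|_{L^r_x L^{\widetilde r}_y}$-to-$L^{r'}_x L^{\widetilde r'}_y$ with decay rate $(M-k)(\tfrac12-\tfrac1r)+k(\tfrac12-\tfrac1{\widetilde r})$ — this exponent is precisely the right-hand side of the admissibility inequality, divided by $2$ and doubled, i.e. it matches $2/q$ at the endpoint.

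From the unitarity bound and this mixed-norm dispersive bound, the abstract Keel–Tao theorem (applied with the one-parameter family $U(t)=e^{it\Phi(D)}P_1$, the Banach space being $L^{\widetilde r}_y$-valued functions of $x$, and decay exponent $\sigma=(M-k)(\tfrac12-\tfrac1r)+k(\tfrac12-\tfrac1{\widetilde r})$) yields $\|U(t)f\|_{L^q_t L^r_x L^{\widetilde r}_y}\lesssim\|f\|_{L^2}$ for all $q\geq 2$ with $2/q\leq\sigma$, excluding only the doubly-endpoint case $q=2$ when $\sigma=1$; since the hypothesis is $q>2$ this is not an issue, and in fact the non-endpoint range can also be obtained more elementarily by Hardy–Littlewood–Sobolev applied to the $TT^*$ kernel without invoking the hard endpoint. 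Finally I undo the Littlewood–Paley reduction: rescaling the dyadic estimate to frequency $2^j$ costs exactly a power of $2^{js}$ by the computation analogous to \eqref{scaling}, the scaling condition $\tfrac mq=-s+(d-k)(\tfrac12-\tfrac1r)+k(\tfrac12-\tfrac1{\widetilde r})$ guaranteeing the exponents line up, and Minkowski's inequality (valid since $q,r,\widetilde r\geq 2$) plus the Littlewood–Paley characterization of $\dot H^s$ lets me sum in $j$ to close the estimate.

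The main obstacle I expect is the kernel estimate: getting the \emph{mixed} decay rate with the correct split between the $x$- and $y$-directions requires more than a black-box stationary phase bound, because the Hessian of $\Phi$ need not be block-diagonal in the $(\xi,\eta)$-splitting, so the $k$ "extra" degenerate directions guaranteed only off the $|\eta|\lesssim 2$ slice must be handled by first integrating in $\eta$ on that slice with a $|t|^{-k/2}$ gain in $y$ (using Assumption (A)(5) to ensure the $\eta$-Hessian there is non-degenerate in enough directions) and then integrating in $\xi$ with the remaining $|t|^{-(M-k)/2}$ gain in $x$ (Assumption (A)(4)); carefully bookkeeping that these two applications of van der Corput compose to give a clean $|t|$-power uniformly in $(x,y)$, and that the implied constants do not blow up as $|\eta|\to 0$ within the support of $\Psi$, is the delicate technical heart of the argument.
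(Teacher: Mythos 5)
Your overall architecture --- Littlewood--Paley reduction to a unit-frequency piece, a frequency-localized mixed-norm dispersive bound, $TT^*$ plus Young/Hardy--Littlewood--Sobolev in time (the non-endpoint route you mention is exactly what the paper uses, not Keel--Tao), and rescaling via the homogeneity and the scaling condition to sum the dyadic pieces against $\dot H^s$ --- matches the paper's proof. The gap is in the kernel estimates, which as written cannot produce the decay rate $\beta(r,\widetilde r)=(M-k)\bigl(\tfrac12-\tfrac1r\bigr)+k\bigl(\tfrac12-\tfrac1{\widetilde r}\bigr)$. You assign the decay $|t|^{-(M-k)/2}$ to \emph{both} the $L^1_xL^1_y\to L^\infty_xL^\infty_y$ and the $L^1_xL^2_y\to L^\infty_xL^2_y$ bounds. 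First, this misreads Assumption (A)(4): the rank of $H\Phi$ is at least $M$ (not $M-k$) on the whole annulus, so the full kernel obeys $|K_t|\lesssim(1+|t|)^{-M/2}$. Second, and more seriously, if both vertices $(\infty,\infty)$ and $(\infty,2)$ carry the same exponent $(M-k)/2$, then interpolating with the trivial $L^2\to L^2$ bound yields only $(M-k)\bigl(\tfrac12-\tfrac1r\bigr)$: the term $k\bigl(\tfrac12-\tfrac1{\widetilde r}\bigr)$ disappears, and with it the claimed admissible range of $(q,r,\widetilde r)$. The $\widetilde r$-dependence of $\beta$ comes precisely from the gap of $k/2$ between the decay at $(\infty,\infty)$ (rate $M/2$, full-rank Hessian) and at $(\infty,2)$ (rate $(M-k)/2$, Assumption (A)(5)).

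Relatedly, the ``delicate technical heart'' you anticipate --- a two-step stationary phase extracting $|t|^{-k/2}$ in $y$ on the slice $|\eta|\lesssim2$, with worries about the Hessian not being block-diagonal --- is not needed and is not how the estimate is closed. At the vertex $(\infty,2)$ one takes the partial Fourier transform in $y$ and uses Plancherel in that variable (no oscillatory integral in $\eta$ at all); the remaining kernel $\widetilde K(x,\eta,t)=\int e^{i(x,t)\cdot(\xi,\Phi(\xi,\eta))}\psi(\xi,\eta)\,d\xi$ is handled by stationary phase in $\xi$ alone, uniformly over $|\eta|\le 2$, using Assumption (A)(5), followed by Young's inequality in $x$. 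Riesz--Thorin over the three vertices $(2,2)$, $(\infty,\infty)$, $(\infty,2)$ then gives exactly $\beta(r,\widetilde r)$. With the two endpoint decay rates corrected to $M/2$ and $(M-k)/2$ respectively, the remainder of your argument goes through.
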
   

    These refined Strichartz estimates enable us to leverage varying integrability and regularity properties across different spatial directions, thereby making it possible to formulate a well-posedness theory under the assumption of only partial regularity in the initial data.

		\begin{rem} We remark that Theorem \ref{them1.1} and its proof not only generalize several known results, but also provide a unified framework that encompasses various dispersive models, including the Schr\"odinger, wave, and higher-order Schr\"odinger equations.
 In the special case where $r = \tilde{r}$, it recovers the classical Strichartz estimates for higher-order Schr\"odinger equations as established in \cite{CHL11}. When $\Phi(\xi) = |\xi|^2$ or $\Phi(\xi) = |\xi|$, the theorem aligns with the results obtained in \cite{KLS23} for the Schr\"odinger and wave equations, respectively. It is important to note that the admissibility condition in Theorem \ref{them1.1} is based on the assumption (see Assumptions {\bf (A)}) that the Hessian $H_{\Phi}(\xi)$ of $\Phi$ has rank $M \geq 1$ for all $\xi \neq 0$.
\end{rem}

Now that we have established the Strichartz estimates \eqref{eq1inr}, we turn our attention to the local-in-time well-posedness of the Cauchy problem for the nonlinear Schrödinger equation \eqref{higher order}, considering initial data with partial regularity, specifically $f \in L_x^2\left(\mathbb{R}^{d-2} ; H_y^s\left(\mathbb{R}^2\right)\right)$.
	\begin{theorem} \label{thmintowel}
		Let   $0<s \leq 1, m>2$ and $d \geq 3$.  Let   $1<p<1+\frac{2m}{ d-2s}$ and $f \in$ $L_x^2\left(\mathbb{R}^{d-2} ; H_y^s\left(\mathbb{R}^2\right)\right)$.  Further, assume that  
		$(q, r, \tilde{r})$ with $2 \leq \widetilde{r} \leq r<\infty$ and $ 2<q \leq \infty $ satisfies $$			\frac{{{m}}}{q}=(d-2)\left(\frac{1}{2}-\frac{1}{r}\right)+2\left(\frac{1}{2}-\frac{1}{\widetilde{r}}\right).$$ Then there exist  $T>0$ and a unique solution $u$ to \eqref{higher order} such that 
		$$
		u \in C_t\left([0, T] ;L_x^2\left(\mathbb{R}^{d-2} ; H_y^s\left(\mathbb{R}^2\right)\right)\right) \cap L_t^q\left([0, T] ; L_x^r \left(\mathbb{R}^{d-2}; W_y^{s, \widetilde{r}}(\mathbb{R}^{2})\right)\right).
		$$
		\end{theorem}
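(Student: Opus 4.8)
The plan is to run a standard contraction-mapping argument in a suitably chosen complete metric space, using the refined Strichartz estimates of Theorem~\ref{them1.1} as the linear input. First I would fix the triple $(q,r,\widetilde r)$ as in the hypothesis and observe that, since $0<s\le 1$ and $1<p<1+\frac{2m}{d-2s}$, the strict subcriticality gives us room to choose a second admissible triple; more precisely, I would set up the Duhamel formulation
\begin{equation*}
u(t)=e^{it(-\Delta)^{m/2}}f-i\int_0^t e^{i(t-\tau)(-\Delta)^{m/2}}F_p(u)(\tau)\,d\tau,
\end{equation*}
and define the solution space
\begin{equation*}
X_{T,\rho}=\Big\{u\in C_t\big([0,T];L_x^2 H_y^s\big)\cap L_t^q\big([0,T];L_x^r W_y^{s,\widetilde r}\big):\ \|u\|_{X_T}\le \rho\Big\},
\end{equation*}
equipped with the weaker metric $d(u,v)=\|u-v\|_{L_t^q L_x^r L_y^{\widetilde r}}$ (no derivatives), which makes $X_{T,\rho}$ complete. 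The map $\Psi(u)$ given by the right-hand side of the Duhamel formula is the object to which I would apply Banach's fixed point theorem.

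The key steps are then: (i) apply Theorem~\ref{them1.1} (with $\Phi(\xi)=|\xi|^m$, $k=2$, $M-k=d-2$, noting $|\xi|^m$ is elliptic and has full-rank Hessian away from the origin so $M=d$) to bound the linear term $e^{it(-\Delta)^{m/2}}f$ in $L_t^q L_x^r W_y^{s,\widetilde r}$ by $\|f\|_{L_x^2 H_y^s}$, after commuting the $y$-derivatives $\langle D_y\rangle^s$ through the propagator (it is a Fourier multiplier in $y$, hence commutes) and checking that the scaling identity in Theorem~\ref{thmintowel} is exactly the one required there; (ii) apply the inhomogeneous (retarded) estimate, obtained from Theorem~\ref{them1.1} together with the Christ--Kiselev lemma and a $TT^*$/duality argument, to bound the Duhamel term in $X_T$ by $\|F_p(u)\|_{L_t^{q'} L_x^{r'} W_y^{s,\widetilde r'}}$ on $[0,T]$, or more conveniently by $\|F_p(u)\|_{L_t^1 L_x^2 H_y^s}$; (iii) estimate the nonlinearity: using the growth condition \eqref{nonlin}, fractional Leibniz/chain rule in the $y$ variable for $W_y^{s,\widetilde r}$ with $0<s\le1$, and H\"older in $t$ and in $x$, bound $\|F_p(u)\|$ in the dual space by $T^{\theta}\|u\|_{X_T}^p$ for some $\theta>0$ coming precisely from the strict inequality $p<1+\frac{2m}{d-2s}$; (iv) the analogous difference estimate $\|F_p(u)-F_p(v)\|\lesssim T^\theta(\|u\|_{X_T}^{p-1}+\|v\|_{X_T}^{p-1})\,d(u,v)$, again using the $C^1$ bound $|u||F_p'(u)|\sim|F_p(u)|$; then choose $\rho\sim\|f\|_{L_x^2 H_y^s}$ and $T$ small so that $\Psi$ maps $X_{T,\rho}$ into itself and is a contraction in the metric $d$. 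Continuity in time, i.e.\ $u\in C_t([0,T];L_x^2 H_y^s)$, follows from the Strichartz bounds and the continuity of the free flow plus dominated convergence in the Duhamel integral, and uniqueness follows from the contraction (with a standard argument to upgrade uniqueness from $X_{T,\rho}$ to the full space).

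The main obstacle I expect is step (iii): making the nonlinear estimate work in the mixed-norm, partially-regular space $L_x^r W_y^{s,\widetilde r}$ rather than in a classical Sobolev norm. One must choose the auxiliary exponents so that (a) $(q,r,\widetilde r)$ and the exponents dual to the inhomogeneous estimate both satisfy the admissibility condition $\frac2q\le (d-2)(\frac12-\frac1r)+2(\frac12-\frac1{\widetilde r})$ of Theorem~\ref{them1.1}, (b) the fractional Leibniz rule applies in the $y$ variables only (so one needs a Banach-algebra-type estimate $\|uv\|_{W^{s,\widetilde r}_y}\lesssim\|u\|_{W^{s,\widetilde r_1}_y}\|v\|_{L^{\widetilde r_2}_y}+\cdots$ with the right balance of Lebesgue exponents, applied fiberwise in $x$ and then H\"older in $x$), and (c) the leftover power of $T$ is strictly positive, which is where subcriticality $s<s_c$ enters quantitatively. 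Getting all these exponent constraints to be simultaneously satisfiable is the delicate bookkeeping; once the right triple is identified the rest is the routine contraction argument. I would model the exponent choice on the computation in \cite{KLS23} for the second-order case, adjusting the power $2$ to $m$ throughout the scaling relations.
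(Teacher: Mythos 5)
Your proposal follows essentially the same route as the paper: Duhamel formulation, the solution space $X(T)$ equipped with the weaker metric $d(u,v)=\|u-v\|_{L_t^q L_x^r L_y^{\tilde r}}$, the refined Strichartz and inhomogeneous estimates from Theorem~\ref{them1.1} specialized to $\Phi(\xi)=|\xi|^m$ with $k=2$, $M=d$, the fractional chain rule plus two-dimensional Sobolev embedding in the $y$-variable to produce a factor $T^{\theta}$ with $\theta>0$ from subcriticality, and Banach's fixed point theorem; the paper carries out exactly the exponent bookkeeping you flag as the delicate step, choosing $\varepsilon$ and an auxiliary admissible triple $(q_1,r_1,\widetilde r_1)$ modeled on \cite{KLS23}. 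The argument is correct and matches the paper's proof in all essential respects.
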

        \begin{rem}

        The local well-posedness theory for equation \eqref{higher order} in the Schr\"odinger case ($m = 2$) and the wave case ($m = 1$) with partially regular initial data has already been established in the literature; see \cite{KLS23}. Therefore, in Theorem \ref{thmintowel}, we restrict our attention to the higher-order case, namely $m > 2$. Nevertheless, we note that the proof of Theorem \ref{thmintowel} remains valid under the broader assumption $m \geq 2$.
        \end{rem}

    	Now, we turn our attention toward the Strichartz estimates of the Dunkl-Schr\"odinger equations and well-posedness theory for nonlinear Dunkl-Schr\"odinger equations. Over the last four decades, an alternative to classical Fourier analysis, known as \textit{(rational) Dunkl theory}, has been developed in Euclidean spaces. This framework builds upon the structure of a root system $\mathcal{R}$, its associated reflection group (Coxeter group) \( G \), and a multiplicity function \( \K: \mathbb{R} \rightarrow \mathbb{C} \).
 Let $h^2_\K$ and $E_\K$ denote the weight function and Dunkl kernel associated with the root system $\mathcal{R}$, respectively.  Let $N:=d+2\gamma$ denote the homogeneous dimension.  We refer to Section \ref{sec5} for detailed definitions and unexplained notations.

 For $\xi \in \mathbb{C}^{n}$ and for a positive multiplicity function $\kappa,$ Dunkl \cite{dun1991} in 1989 introduced a family of first-order differential-difference operators $T_{\xi}:= T_{\xi}(\kappa)$  by
	\begin{align}\label{dunklinro}
		T_{\xi}(\kappa) f(x):=\partial_{\xi} f(x)+\sum_{\alpha \in \mathcal{R}^{+}} \kappa_{\alpha }\langle \alpha, \xi\rangle \frac{f(x)-f\left(r_{\alpha} x\right)}{\langle \alpha, x\rangle}, \quad f \in C^{1}(\mathbb{R}^{d}),
	\end{align}
	where $\partial_{\xi}$ denotes the directional derivative corresponding to $\xi$. These differential-difference operators, now known as Dunkl operators, extend the classical notion of partial derivatives.  Dunkl operators have found numerous applications across mathematics and mathematical physics. For instance, they play a significant role in the integrability of quantum many-body systems of the Calogero–Moser–Sutherland type and in the analysis of certain probabilistic processes \cite{Van, Hakimi}. Moreover, they have profoundly influenced the theory of special functions and orthogonal polynomials in both one and several variables; see \cite{Dunkl book}.

Let $\{\xi_1, \xi_2, \cdots, \xi_n\}$ be an orthonormal basis of $(\mathbb{R}^{d} , \langle \cdot, \cdot\rangle )$. Then  the Dunkl Laplacian operator $\Delta_\kappa $ is defined as $$\Delta_\kappa:=\sum_{j=1}^nT^2_{\xi_j}(\kappa).$$
  Now, for $m >0 ,$ we consider the   nonlinear Dunkl-Schr\"odinger   equations
	\begin{equation}\label{DNLS1}
		\begin{cases}
			i\partial_t u + (-\Delta_\kappa)^{m/2} u= F_p(u), \\
			u(0, x)=f(x),
		\end{cases}
	\end{equation}
	where the nonlinearity $F_p \in C^1$ with $p>1$ satisfies
	\begin{equation}\label{as1}
		|F_p(u)|\lesssim |u|^p \quad \text{and} \quad |u||F'_p(u)|\sim |F_p(u)|. 
	\end{equation}
     
    Here $(-\Delta_\kappa)^{m/2}$ denotes the Dunkl-Fourier multiplier operator with symbol $|\xi|^m$ given by 
    $$(-\Delta_\kappa)^{m/2} f(x):= \int_{\mathbb{R}^d}  |\xi|^m \mathcal{F}_\kappa{f}(\xi) E_\kappa (i x, \xi) h_\kappa(\xi) d\xi $$
    with the Dunkl Fourier transform  $$\mathcal{F}_\kappa{f}(\xi):= \frac{1}{c_{\kappa}}\int_{\mathbb{R}^d}  f(x) E_\kappa (x,-i \xi) h_\kappa(x)dx.$$

	Our aim is to investigate Strichartz estimates and local well-posedness for the Dunkl–Schr\"odinger equation \eqref{DNLS} within the framework of partial regularity. Since Dunkl analysis is intrinsically connected to root systems, it is essential to first establish a suitable setting for formulating partial regularity on $\mathbb{R}^d$ in this context.
       Let $\mathcal{R}_1$ and $\mathcal{R}_2$ be root systems in $\mathbb{R}^{d-k}$ and $\mathbb{R}^k$, respectively, with associated reflection groups $G_1$ and $G_2$, where $1 \leq k \leq d$.
  Then,
	$$
	\mathcal{R}:=\mathcal{R}_1 \times(0)_{k} \cup(0)_{d-k} \times \mathcal{R}_2 ,
	$$
	where $(0)_j=(0,0, \ldots, 0) \in \mathbb{R}^j$, is a root system on $\mathbb{R}^{d}$ with reflection group $G=G_1 \times G_2$. Let $\kappa_1 $ and $\kappa_2 $   be  multiplicity functions for $\mathcal{R}_1$ and   $\mathcal{R}_2$ respectively. Then we  define the multiplicity function $\kappa: \mathcal{R} \rightarrow \mathbb{Z}_{\geq 0}$ the root system $\mathcal{R}$ is given by $$
	\begin{array}{ll}
		\kappa(\alpha, 0)=\kappa_1(\alpha) & \text { for all } \alpha \in \mathcal{R}_1 \\
		\kappa(0, \beta)=\kappa_2(\beta) & \text { for all } \beta \in \mathcal{R}_2.
	\end{array}
	$$ In fact, for any $(x, y)\in \mathbb{R}^{d}$, let $h_{\kappa_1}(x)$ and $h_{\kappa_2}(y)$ be the
	weight functions on $\mathcal{R}_1$ and $\mathcal{R}_2$ with homogeneous degree $2\gamma_1$ and  $2\gamma_2$.  Then $h_{\kappa}(x, y)=h_{\kappa_1}(x) h_{\kappa_2}(y)$ is the weight on $\mathcal{R}$  with homogeneous degree $2\gamma=2\gamma_1+2\gamma_2$.  From now on, we will fix this root system and multiplicity function. In this case, the homogeneous dimension becomes $N:=d+2\gamma_1+2\gamma_2=d+2\gamma.$

	Note that,  if $u(t,x)$ is a solution of \eqref{DNLS1}, then so is
	\begin{equation*}
		u_{\delta}(t, x)= \delta^{\frac{m}{p-1}} u(\delta^{m} t, \delta x), \quad \delta>0,
	\end{equation*}
    thanks to the scaling invariance of $\Delta_\kappa.$
	In addition, the Sobolev norm of the rescaled initial data $f_{\delta}(x)=u_{\delta}(0, x)$ is given in terms of the original $f$ as 
	\begin{equation*}
		\| f_{\delta}\|_{\dot{H}_\kappa^s(\mathbb{R}^{d})} =\delta^{\frac{m}{p-1}+s-\frac{d+2\gamma}{2}} \|f\|_{ \dot{H}_{\kappa }^s(\mathbb{R}^{d})},
	\end{equation*}
    where the homogeneous Dunkl-Sobolev space $\dot{H}_\kappa^{s}\left(\mathbb{R}^{d}\right)$ is the set of tempered distributions $u \in \mathcal{S}'\left(\mathbb{R}^{d}\right)$ such that $\mathcal{F}^{-1}_\kappa\left(|\cdot|^s \mathcal{F}_\kappa f\right)\in L_\kappa^2\left(\mathbb{R}^{d}\right)$ and is equipped with the norm 
		\begin{equation*}
			\|u\|_{\dot{H}_\kappa^{s}\left(\mathbb{R}^{d}\right)}=\left\|\mathcal{F}^{-1}_\kappa\left(|\cdot|^s \mathcal{F}_\kappa f\right)\right\|_{L_\kappa^2\left(\mathbb{R}^{d}\right)}.
		\end{equation*}
This scaling suggests the critical exponent $p$ for \eqref{DNLS1}  given by
	\begin{equation*}
	    p=p_c(d, s)=1+\frac{2m}{d+2\gamma-2s}, \quad \text{alternately}\,\, s_c= \frac{d+2\gamma}{2}-\frac{m}{p-1}.
	\end{equation*}
    In the classical case of the nonlinear Dunkl–Schrödinger equation, corresponding to $m = 2$, Strichartz estimates for the free Dunkl–Schrödinger propagator $e^{it\Delta_\kappa}$ have been established in \cite{Mejjaoli2009, shyam}. As a consequence, the local well-posedness of the nonlinear Dunkl–Schrödinger equation with initial data in $L^2_\kappa(\mathbb{R}^d)$ has been proved for the range $1 < p < \frac{4}{d + 2\gamma}$; see \cite{Mejjaoli2009}.

	On the other hand 
	$$\| f_{\delta}\|_{L_{\K_1,x}^2 \dot{H}_{\kappa_2 ,y}^s} =\delta^{\frac{m}{p-1}+s-\frac{d+2(\gamma_1+\gamma_2)}{2}} \|f\|_{L_{\K_1,x}^2 \dot{H}_{\kappa_2, y}^s}=\delta^{\frac{m}{p-1}+s-\frac{N}{2}} \|f\|_{L_{\K_1,x}^2 \dot{H}_{\kappa_2, y}^s},$$
regardless of the value of $k$. This shows that the same range of $p$ is can be guarantee even if we impose regularity only partially.

    An essential tool to prove well-posedness theory is the Strichartz estimates. Strichartz estimates for the Dunkl-Schr\"{o}dinger propagator $e^{it\Delta_\kappa}$ can be seen   \cite{Mejjaoli2009, shyam} and references therein. Due to the importance as described above,  an interesting and viable problem
	is to study the Strichartz estimate for the Dunkl-Schr\"odinger equation in the partial regularity framework: 
    \begin{equation*}
		\begin{cases}
			i\partial_t u(t, x) + (-\Delta_\kappa)^{m/2} u(t, x)= 0, \quad (t, x) \in \mathbb{R}_+ \times \mathbb{R}^d, \\
			u(0, x)=f(x),
		\end{cases}
	\end{equation*} for $m>0.$
    In this direction, the following Strichartz estimate for the free Dunkl-Schr\"odinger propagation $u(\cdot, \cdot ):=e^{i t(-\Delta_\kappa)^{\frac{m}{2}}} f(\cdot)$ is one of the main results. 
	\begin{theorem} \label{Dunklshrodintro}
		Let $d\geq 1,$ $1 \leq k \leq d,$ $  2 \leq \widetilde{r} \leq r<\infty , m\in (0, \infty)\backslash \{1\} $  and $ 2<q \leq \infty$. If $ q, r, \tilde{r}$ satisfy 
		\begin{align*}
			\frac{2}{q} \leq(d+2\gamma_1-k)\left(\frac{1}{2}-\frac{1}{r}\right)+(k+2\gamma_2)\left(\frac{1}{2}-\frac{1}{\widetilde{r}}\right). 
		\end{align*}
		Then we have
		\begin{align*}
			\left\|e^{i t(-\Delta_\kappa)^{\frac{m}{2}}} f\right\|_{L_t^q\left(\mathbb{R} ; L_{{ \kappa_1},x}^r\left(\mathbb{R}^{d-k} ; L_{{ \kappa_2},y}^{\tilde{r}}\left(\mathbb{R}^k\right)\right)\right)} \lesssim\|f\|_{\dot{H}_{\kappa}^s}^2
		\end{align*}
		under the scaling condition
		\begin{align*} 
			\frac{{{m}}}{q}=-s +(d+2\gamma_1-k)\left(\frac{1}{2}-\frac{1}{r}\right)+(k+2\gamma_2)\left(\frac{1}{2}-\frac{1}{\widetilde{r}}\right).
		\end{align*}
	\end{theorem}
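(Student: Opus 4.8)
The plan is to derive Theorem~\ref{Dunklshrodintro} from the abstract estimate of Theorem~\ref{them1.1} via the generalized translation and transference principles available in Dunkl analysis, by reducing the Dunkl--Schr\"odinger propagator to the Euclidean-type oscillatory setting up to the weight factors. Observe first that under the product root system $\mathcal{R}=\mathcal{R}_1\times(0)_k\cup(0)_{d-k}\times\mathcal{R}_2$ the Dunkl kernel and weight factorize, $E_\kappa\big((x,y),(\xi,\eta)\big)=E_{\kappa_1}(x,\xi)\,E_{\kappa_2}(y,\eta)$ and $h_\kappa(x,y)=h_{\kappa_1}(x)h_{\kappa_2}(y)$, so that $e^{it(-\Delta_\kappa)^{m/2}}$ acts as a Dunkl--Fourier multiplier with symbol $e^{it|(\xi,\eta)|^m}$ which is precisely of the form $\Phi(\xi,\eta)=|(\xi,\eta)|^m$. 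One checks directly that $\Phi(\xi,\eta)=|(\xi,\eta)|^m$ with $m\in(0,\infty)\setminus\{1\}$ satisfies Assumptions \textbf{(A)} with the substitutions $d-k\rightsquigarrow d+2\gamma_1-k$ and $k\rightsquigarrow k+2\gamma_2$ playing the role of the effective dimensions: indeed $|\nabla\Phi|\neq0$ off the origin, $\Phi$ is $m$-homogeneous, $|\Phi|\simeq1$ on the sphere, and the Hessian of $|\cdot|^m$ has full rank $d$ away from the origin (here $m\neq1$ is exactly what guarantees the Hessian is nondegenerate), so conditions (4) and (5) hold with $M=d$, $M-k = d-k$. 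The shift from $k$ to $k+2\gamma_2$ and from $d-k$ to $d+2\gamma_1-k$ in the final admissibility and scaling relations is the manifestation of the homogeneous dimensions $N_1=d-k+2\gamma_1$ and $N_2=k+2\gamma_2$ replacing the Euclidean dimensions in the dispersive decay.

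Concretely, I would run the standard Keel--Tao machine adapted to the Dunkl measure $h_{\kappa_1}(x)h_{\kappa_2}(y)\,dx\,dy$. First perform a Littlewood--Paley decomposition in the Dunkl--Fourier variable, reducing \eqref{eq1inr}-type estimates to a frequency-localized piece $e^{it(-\Delta_\kappa)^{m/2}}P_j f$; the $m$-homogeneity (condition (3)) lets one rescale to frequency $\sim1$. Next establish the $L^1_\kappa\to L^\infty_\kappa$ dispersive estimate for the localized kernel by analyzing the Dunkl oscillatory integral
\[
K_t(x,y;x',y')=\int_{\mathbb{R}^{d-k}\times\mathbb{R}^k} e^{it|(\xi,\eta)|^m}\,\Psi(\xi,\eta)\,E_{\kappa_1}(i\xi,x)\overline{E_{\kappa_1}(i\xi,x')}\,E_{\kappa_2}(i\eta,y)\overline{E_{\kappa_2}(i\eta,y')}\,h_{\kappa_1}(\xi)h_{\kappa_2}(\eta)\,d\xi\,d\eta.
\]
Here the crucial input is the ``stationary phase method in the Dunkl setting'' that the abstract says the paper develops --- this replaces the classical estimate $|\int e^{-iz\cdot\xi+it\Phi(\xi)}\Psi|\lesssim (1+|t|)^{-\rho}$ with its weighted analogue, giving decay $|K_t|\lesssim |t|^{-(d+2\gamma_1-k)/2}$ in the $x$-variables uniformly, and the improved partial decay $|t|^{-(k+2\gamma_2)/2}$ when $|\eta|$ is further localized (this is where conditions (4) and (5) get used: rank $M=d$ off the origin, rank $\geq M-k=d-k$ when $|\eta|\lesssim2$). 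Interpolating the mixed $L^r_{\kappa_1,x}L^{\tilde r}_{\kappa_2,y}$ dispersive bounds with the $L^2$ conservation $\|e^{it(-\Delta_\kappa)^{m/2}}f\|_{L^2_\kappa}=\|f\|_{L^2_\kappa}$ (Plancherel for the Dunkl transform) and feeding the result into the $TT^*$ / Keel--Tao abstract lemma yields the frequency-localized Strichartz bound; summing the Littlewood--Paley pieces with the scaling relation $\frac{m}{q}=-s+(d+2\gamma_1-k)(\frac12-\frac1r)+(k+2\gamma_2)(\frac12-\frac1{\tilde r})$ absorbing the $\dot H^s_\kappa$ loss completes the argument.

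The main obstacle, as the authors flag in the abstract, is the absence of a ready-made stationary phase / van der Corput lemma in Dunkl analysis: the Dunkl kernel $E_\kappa(ix,\xi)$ is not a plain exponential, it lacks an explicit closed form in higher rank, and its oscillation and decay are only partially understood (one has $|E_\kappa(ix,\xi)|\le1$ and integral representations via the intertwining operator $V_\kappa$, but no pointwise asymptotics). So the genuinely new work is establishing a weighted oscillatory-integral decay estimate robust enough to yield the sharp exponents $(d+2\gamma_1-k)/2$ and $(k+2\gamma_2)/2$; I would attack this by writing $E_\kappa(i\xi,x)=\int e^{i\langle\zeta,x\rangle}\,d\mu_x(\zeta)$ through Rösler's positive representing measure supported in the convex hull of the orbit $G\cdot x$, interchanging integrals, applying the classical stationary phase bound to the inner Euclidean oscillatory integral uniformly in $\zeta$, and then controlling the $\mu_x$-average together with the weight $h_\kappa$ --- the homogeneity of $h_\kappa$ and the support properties of $\mu_x$ should produce exactly the homogeneous-dimension exponents. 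Once this Dunkl stationary phase lemma is in hand, the rest is a faithful transcription of the proof of Theorem~\ref{them1.1} with Lebesgue measure replaced by the Dunkl measure and the Euclidean Fourier transform replaced by $\mathcal{F}_\kappa$, so I would structure the write-up as: (i) the Dunkl stationary phase lemma, (ii) verification of Assumptions \textbf{(A)} for $|\cdot|^m$, (iii) invocation of the (suitably Dunkl-adapted) machinery behind Theorem~\ref{them1.1}.
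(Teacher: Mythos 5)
Your overall architecture is exactly the paper's: a Dunkl stationary-phase lemma giving kernel decay, Riesz--Thorin interpolation of the fixed-time estimate on the three endpoint mixed norms, a $TT^*$ argument with Young/Hardy--Littlewood--Sobolev in time for the frequency-localized piece, and a Littlewood--Paley decomposition plus $m$-homogeneous rescaling to absorb the $\dot H^s_\kappa$ loss. The problem is in the one step you correctly identify as the crux, the Dunkl stationary-phase lemma, where your proposed mechanism does not produce the right exponent. Writing $E_\kappa(i\xi,x)=\int e^{i\langle\zeta,\xi\rangle}\,d\rho_x^\kappa(\zeta)$ and applying the classical stationary-phase bound to the inner Euclidean integral $\int e^{i\langle\zeta,\xi\rangle+it|\xi|^m}\psi(|\xi|)h_\kappa(\xi)\,d\xi$ uniformly in $\zeta$ can only yield decay governed by the rank of the Hessian in the \emph{Euclidean} dimension, i.e. $(1+|t|)^{-d/2}$, whereas the theorem requires $(1+|t|)^{-N/2}$ with $N=d+2\gamma$: the extra $|t|^{-\gamma}$ must come from the weight $h_\kappa(\xi)=\prod_\alpha|\langle\alpha,\xi\rangle|^{2\kappa_\alpha}$, and treating $h_\kappa$ as a harmless bounded amplitude discards exactly that gain (it is also not smooth across the reflecting hyperplanes unless the $2\kappa_\alpha$ are even integers, so Lemma \ref{eq8} does not even apply directly). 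Averaging in $\zeta$ against $\rho_x^\kappa$ cannot recover the loss, since $\rho_x^\kappa$ is a probability measure supported in the convex hull of $G\cdot x$, which contains points far inside the ball of radius $|x|$.

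The paper's Lemma \ref{general case} avoids this entirely by exploiting that both the symbol $e^{-it\phi(|\xi|)}$ and the cutoff $\psi(|\xi|)$ are radial: via the Hankel-type formula \eqref{radial-Dunkl} the $d$-dimensional Dunkl integral collapses to a one-dimensional integral $\int_0^\infty e^{-it\phi(r)}\psi(r)\frac{J_{(N-2)/2}(r|x|)}{(r|x|)^{(N-2)/2}}r^{N-1}\,dr$, in which the homogeneous dimension $N$ enters through the order of the Bessel function and the factor $r^{N-1}$. The decomposition $J_\nu(r)r^{-\nu}=C(e^{ir}h(r)+e^{-ir}\overline{h(r)})$ with $|h^{(\beta)}(r)|\lesssim(1+r)^{-(d-1)/2-\beta}$ turns this into one-dimensional oscillatory integrals with phase $\pm r|x|-t\phi(r)$; repeated integration by parts handles the nonstationary regime $|x|\not\sim|t|$, and a van der Corput argument (using $\phi''\neq0$, i.e. $m\neq1$) handles $|x|\sim|t|$, producing precisely the $(1+|(x,t)|)^{-N/2}$ decay. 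If you want to salvage your route you would need a genuinely weighted stationary-phase estimate that extracts decay from the degeneration of $h_\kappa$ on the hyperplanes; the radial reduction is the clean way around that. A secondary point: the paper estimates $\|K*_\kappa f\|$ via Young's inequality for the Dunkl convolution, which is only available because the kernel is radial, rather than working with a general two-point kernel $K_t(x,y;x',y')$ as you propose; you would need to justify the analogous Schur-type bound in that formulation.
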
   
	We establish this result in Subsection \ref{sec6} by employing the Littlewood–Paley theory in the Dunkl framework, together with the following intresting result, which serves as the Dunkl analogue of the classical lemma from the stationary phase method. 

    \begin{lem}\label{fractionalinr}
			Let  $d \geq 1,  \psi$ be any smooth function on $\mathbb{R}$ supported in $[\frac{1}{2},2]$ and let $\phi: \mathbb{R}^+ \to \mathbb{R}$ be a smooth function.
			\begin{itemize}
				\item If $\phi'(r)\neq0,$ $\forall r\in [\frac{1}{2},2]$, for $(x, t) \in \mathbb{R}^{d+1}$, we have
				$$
				\left|\int_{\mathbb{R}^{d}} E_{\kappa}(ix, \xi) e^{-i t\phi(|\xi|)} \psi(|\xi|) h_\kappa(\xi)  d \xi\right| \lesssim_{\phi,\psi}
				(1+|(x, t)|)^{-\frac{N-1}{2}}.
				$$
				\item If $\phi'(r),\phi''(r)\neq0,$ $\forall r\in [\frac{1}{2},2]$, for $(x, t) \in \mathbb{R}^{d+1}$, we have
				$$
				\left|\int_{\mathbb{R}^{d}} E_{\kappa}(ix, \xi) e^{-i t\phi(|\xi|)} \psi(|\xi|) h_\kappa(\xi)  d \xi\right| \lesssim_{\phi,\psi}
				(1+|(x, t)|)^{-\frac{N}{2}}.
				$$
			\end{itemize}
	\end{lem}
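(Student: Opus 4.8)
The plan is to reduce the Dunkl oscillatory integral estimate to the known decay estimates for the Dunkl kernel and then apply the classical one-dimensional stationary phase lemma in the radial variable. First I would pass to polar coordinates in $\mathbb{R}^d$: writing $\xi = r\omega$ with $r > 0$ and $\omega \in \mathbf{S}^{d-1}$, and using the homogeneity $h_\kappa(r\omega) = r^{2\gamma} h_\kappa(\omega)$, the integral becomes
\begin{equation*}
\int_{1/2}^{2} \psi(r)\, e^{-it\phi(r)}\, r^{N-1} \Bigl( \int_{\mathbf{S}^{d-1}} E_\kappa(ix, r\omega)\, h_\kappa(\omega)\, d\omega \Bigr) dr,
\end{equation*}
since $N - 1 = d - 1 + 2\gamma$. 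The inner spherical integral is (up to a constant) the Dunkl analogue of the Bessel-type spherical mean, often denoted something like $\mathcal{J}(r|x|)$ by rotation-type invariance of the Dunkl kernel under the reflection group; the key input I would invoke is the sharp pointwise/asymptotic bound for this spherical Dunkl kernel average, namely that it and its first two derivatives in $r$ decay like $(1 + r|x|)^{-(N-1)/2}$. This is the Dunkl replacement for the classical fact that $\widehat{d\sigma}(r x) = O((1+r|x|)^{-(d-1)/2})$, and it should either be quoted from the Dunkl literature (e.g. estimates of the Dunkl kernel / generalized Bessel function) or from Section~\ref{sec5} of the paper.

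Next I would treat the two cases separately. When $|(x,t)| \lesssim 1$ the bound is trivial since the integrand is bounded and the domain has finite measure, so assume $|(x,t)| \gtrsim 1$. Write the amplitude as $a(r) = \psi(r)\, r^{N-1} G(r|x|)$ where $G$ is the spherical Dunkl average satisfying $|G^{(j)}(\rho)| \lesssim (1+\rho)^{-(N-1)/2}$ for $j = 0,1,2$. If $|x| \le C|t|$ (so the time oscillation dominates), I apply the standard stationary phase / non-stationary phase lemma in one variable to $\int a(r) e^{-it\phi(r)} dr$: under $\phi' \ne 0$ on $[\tfrac12,2]$ one integration by parts gives decay $|t|^{-1}$, hence decay $(1+|(x,t)|)^{-1}$, and iterating gives any power — in particular $(1+|(x,t)|)^{-(N-1)/2}$ once combined with the bound for $G$; under the stronger hypothesis $\phi', \phi'' \ne 0$ the van der Corput lemma gives the sharp $|t|^{-1/2}$ with a gain, and together with the amplitude decay $\sim |x|^{-(N-1)/2}$ when $|x| \sim |t|$ one assembles the full $(1+|(x,t)|)^{-N/2}$. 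If instead $|x| \ge C|t|$ the spatial decay of $G(r|x|) \sim |x|^{-(N-1)/2}$ already delivers the $(1+|(x,t)|)^{-(N-1)/2}$ (resp. one still needs an extra half power for the $-N/2$ bound, which comes from the $\phi''$ hypothesis via van der Corput applied to the $r$-integral after factoring out the slowly varying amplitude, or from a more careful splitting).

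The main obstacle I anticipate is precisely the proof (or careful citation) of the decay estimate for the spherical Dunkl kernel average $G(\rho) = \int_{\mathbf{S}^{d-1}} E_\kappa(ix, \rho\omega) h_\kappa(\omega)\, d\omega$ together with control of its first two $\rho$-derivatives, since unlike the Euclidean case there is no clean closed form and the Dunkl kernel $E_\kappa$ is only known rather implicitly. I would handle this by either (i) invoking the known product/integral representations of $E_\kappa$ (via the intertwining operator $V_\kappa$ and its positivity, mapping the problem to an average of ordinary exponentials against a compactly supported probability measure, to which the classical $\widehat{d\sigma}$ asymptotics apply uniformly), or (ii) citing the generalized Bessel function asymptotics available in the Dunkl literature; differentiation in $r$ only costs powers of $|\xi| \sim 1$ on the support of $\psi$, so the derivative bounds follow from the same representation. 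A secondary technical point is bookkeeping the two regimes $|x| \lesssim |t|$ versus $|x| \gtrsim |t|$ so that the exponents add up correctly to $(N-1)/2$ in the first bullet and to $N/2$ in the second; this is routine once the amplitude decay and the van der Corput estimates are in hand, mirroring the classical stationary-phase proof of dispersive estimates for $e^{it(-\Delta)^{m/2}}$.
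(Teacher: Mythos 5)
Your overall strategy (reduce to a one-dimensional radial integral, use decay of the spherical Dunkl mean of $E_\kappa$, then do stationary phase in $r$) is the same as the paper's, and your anticipated ``main obstacle'' is actually not one: by \eqref{radial-Dunkl} the spherical Dunkl average you call $G(\rho)$ has an exact closed form, namely the normalized Bessel function $J_{(N-2)/2}(\rho)/\rho^{(N-2)/2}$, so no appeal to the intertwining operator is needed. The genuine gap is elsewhere, in how you treat $G$. You propose to keep $a(r)=\psi(r)r^{N-1}G(r|x|)$ as an amplitude satisfying only $|G^{(j)}(\rho)|\lesssim(1+\rho)^{-(N-1)/2}$ with no gain per derivative. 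That is the correct bound for the Bessel kernel, but precisely because $G$ oscillates at unit frequency, each $r$-derivative of $G(r|x|)$ costs a factor $|x|$ with no compensating decay. In the critical regime $|x|\sim|t|$ (both large), van der Corput with this amplitude gives at best $|t|^{-1/2}\bigl(\|a\|_\infty+\|a'\|_{L^1}\bigr)\lesssim |t|^{-1/2}\cdot|x|\,(1+|x|)^{-(N-1)/2}\sim|t|^{1-N/2}$, which misses the claimed $|t|^{-N/2}$ by a full power of $|t|$; the same loss blocks the second bullet in the regime $|x|\gg|t|$, where the amplitude bound alone yields only $|x|^{-(N-1)/2}$ and the $t$-phase has no useful curvature relative to the frequency $|x|$ of the amplitude. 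Your hedge ``or from a more careful splitting'' does not supply the missing half power (plus the lost whole power).

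The idea your proposal is missing is the one the paper builds the argument around: decompose the Bessel kernel as in \eqref{Bessel-Fourier}, $J_{(N-2)/2}(\rho)/\rho^{(N-2)/2}=C\bigl(e^{i\rho}h(\rho)+e^{-i\rho}\overline{h(\rho)}\bigr)$, where $h$ is genuinely non-oscillatory and satisfies the derivative-improving bound \eqref{h}, $|h^{(\beta)}(\rho)|\lesssim(1+\rho)^{-\frac{N-1}{2}-\beta}$ (so that $\frac{d}{dr}h(r|x|)$ is again $O((1+r|x|)^{-(N-1)/2})$ on $r\in[\tfrac12,2]$, cf.\ \eqref{big-s}). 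One then absorbs $\pm r|x|$ into the phase and studies $\pm r|x|-t\phi(r)$: when $|x|\ge 2\max|\phi'|\,|t|$ or $|x|\le\tfrac12\min|\phi'|\,|t|$ the combined phase is non-stationary with derivative $\gtrsim|(x,t)|$ and repeated integration by parts gives arbitrary decay; when $|x|\sim|t|$ the slowly varying amplitude $h(r|x|)$ contributes $(1+|x|)^{-(N-1)/2}$ and the (possible) stationary point of the combined phase contributes $|t|^{-1/2}$ via Van der Corput under the hypothesis $\phi''\neq0$ (this is the step the paper cites from \cite{LMS}), assembling exactly $|t|^{-N/2}$. Your case split and bookkeeping do work for the first bullet (where $(1+|x|)^{-(N-1)/2}$ plus crude integration by parts in the regime $|x|\lesssim|t|$ suffices), but without extracting the Bessel oscillation the second bullet cannot be closed.
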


    Our last result of this paper concerns the Strichartz estimates for the Dunkl wave propagator, which is a solution to the following Dunkl-wave equation
    \begin{equation*}
		\begin{cases}
			i\partial_t u(t, x) + (-\Delta_\kappa)^{1/2} u(t, x)= 0, \quad (t, x) \in \mathbb{R}_+ \times \mathbb{R}^d, \\
			u(0, x)=f(x),
		\end{cases}
	\end{equation*}
    with  initial data $f$ in $\dot{H}_{\kappa}^s$.

    \begin{theorem}\label{Dunklwaveintro}
		Let $N_1=d+2\gamma_1 \geq 2,$ $1 \leq k \leq N_1-1,$ $ 2 \leq \widetilde{r} \leq r<\infty $ and $ 2<q \leq \infty$. If $ q, r, \tilde{r}$ satisfy 
		\begin{align*}
			\frac{2}{q} \leq(d+2\gamma_1-k-1)\left(\frac{1}{2}-\frac{1}{r}\right)+(k+2\gamma_2)\left(\frac{1}{2}-\frac{1}{\widetilde{r}}\right). 
		\end{align*}
		Then we have
		\begin{align*}
			\left\|e^{i t(-\Delta_\kappa)^{\frac{1}{2}}} f\right\|_{L_t^q\left(\mathbb{R} ; L_{{\kappa_1},x}^r\left(\mathbb{R}^{d-k} ; L_{{ \kappa_2},y}^{\tilde{r}}\left(\mathbb{R}^k\right)\right)\right)} \lesssim\|f\|_{\dot{H}_{\kappa}^s}
		\end{align*}
		under the scaling condition
		\begin{align*}
			\frac{{{1}}}{q}=-s +(d+2\gamma_1-k)\left(\frac{1}{2}-\frac{1}{r}\right)+(k+2\gamma_2)\left(\frac{1}{2}-\frac{1}{\widetilde{r}}\right).
		\end{align*}
	\end{theorem}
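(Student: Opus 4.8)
\textbf{Proof proposal for Theorem \ref{Dunklwaveintro}.}

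The plan is to treat the Dunkl wave propagator $e^{it(-\Delta_\kappa)^{1/2}}$ exactly along the lines already used for Theorem \ref{Dunklshrodintro}, namely by combining a Littlewood--Paley decomposition in the Dunkl setting with the dispersive decay furnished by Lemma \ref{fractionalinr}, and then running the standard $TT^*$ argument. The only genuine difference from the Schr\"odinger case is that the phase is $\phi(r)=r$, for which $\phi'(r)=1\neq 0$ but $\phi''(r)\equiv 0$ on $[\tfrac12,2]$; hence we are forced into the \emph{first} bullet of Lemma \ref{fractionalinr}, which gives the weaker decay rate $(1+|(x,t)|)^{-(N-1)/2}$ rather than $(1+|(x,t)|)^{-N/2}$. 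This loss of one power is exactly what accounts for the appearance of $d+2\gamma_1-k-1$ (instead of $d+2\gamma_1-k$) in the admissibility hypothesis of the theorem, while the scaling condition stays formally identical to the Schr\"odinger one with $m=1$.

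First I would perform a Littlewood--Paley decomposition $f=\sum_{j\in\Z}P_jf$ adapted to $|\xi|$ in the Dunkl-Fourier variable, and reduce \eqref{Dunklwaveintro}, via Littlewood--Paley square-function estimates in $L_\kappa^r$ (valid in the Dunkl framework) together with Minkowski's inequality in the mixed norm $L_{\kappa_1,x}^r L_{\kappa_2,y}^{\tilde r}$, to a single frequency-localized estimate at scale $|\xi|\sim 2^j$. By the scaling invariance of $\Delta_\kappa$ (the rescaling $u_\delta(t,x)=u(\delta t,\delta x)$ used in the text, with the homogeneous dimension $N=d+2\gamma$ replacing $d$), it suffices to prove the estimate for $j=0$, i.e. for data with Dunkl-Fourier support in the annulus $|\xi|\sim 1$. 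For such data the kernel of $e^{it(-\Delta_\kappa)^{1/2}}P_0$ is precisely the oscillatory Dunkl integral in Lemma \ref{fractionalinr} with $\phi(r)=r$ and $\psi$ a bump on $[\tfrac12,2]$; the first bullet of that lemma gives the pointwise bound $\lesssim (1+|(x,t)|)^{-(N-1)/2}$, and this, paired with the trivial $L^2_\kappa\to L^2_\kappa$ bound, is the dispersive input. The partial-regularity refinement — distributing $N-1=(d+2\gamma_1-1)+2\gamma_2$ as $(d+2\gamma_1-k-1)$ ``spent'' on the $x$-variables in $L^r$ and $(k+2\gamma_2)$ on the $y$-variables in $L^{\tilde r}$ — is carried out by interpolating the dispersive kernel bound between the $x$ and $y$ blocks exactly as in the proof of Theorem \ref{Dunklshrodintro}, yielding a decay of order $|t|^{-\frac{N_1-k-1}{2}(\frac12-\frac1r)\cdot 2-\frac{k+2\gamma_2}{2}(\frac12-\frac1{\tilde r})\cdot 2}$ in the mixed norm $L_{\kappa_1,x}^{r'}L_{\kappa_2,y}^{\tilde r'}\to L_{\kappa_1,x}^{r}L_{\kappa_2,y}^{\tilde r}$ after the usual Riesz--Thorin interpolation against the $L^2\to L^2$ bound.

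Next I would feed this frequency-localized dispersive estimate into the abstract $TT^*$/Hardy--Littlewood--Sobolev machinery: writing $T f = e^{it(-\Delta_\kappa)^{1/2}}P_0 f$, the operator $TT^*$ has kernel controlled by $(1+|t-s|)^{-\theta}$ with $\theta = (d+2\gamma_1-k-1)(\tfrac12-\tfrac1r)+(k+2\gamma_2)(\tfrac12-\tfrac1{\tilde r})$ in the relevant mixed norm, so the one-dimensional Hardy--Littlewood--Sobolev inequality in $t$ gives the bound in $L_t^q$ provided $\frac2q\le\theta$ and $q>2$ — which is exactly the stated hypothesis. Undoing the Littlewood--Paley reduction with the scaling bookkeeping reproduces the claimed scaling condition $\frac1q = -s + (d+2\gamma_1-k)(\tfrac12-\tfrac1r)+(k+2\gamma_2)(\tfrac12-\tfrac1{\tilde r})$, the shift from $d+2\gamma_1-k-1$ (decay exponent) to $d+2\gamma_1-k$ (scaling exponent) being the familiar ``half-derivative loss'' of the wave equation encoded through the homogeneity of the symbol $|\xi|$ under $P_j$-rescaling. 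The condition $N_1=d+2\gamma_1\ge 2$ and $1\le k\le N_1-1$ ensures the exponent $d+2\gamma_1-k-1\ge 0$, so the interpolation is legitimate. The main obstacle is not any of these steps individually — each mirrors the Schr\"odinger argument — but rather verifying carefully that the first bullet of Lemma \ref{fractionalinr} interpolates correctly in the mixed-norm setting: one must check that freezing $|\eta|\le 2$ (the ``$y$-low-frequency'' piece that in the Euclidean case corresponds to Assumption (A)(5)) does not destroy the non-vanishing of $\phi'$, and that the Dunkl kernel $E_\kappa(ix,\xi)$ factorizes compatibly with the product root system $\mathcal R=\mathcal R_1\times\mathcal R_2$ so that the $x$- and $y$-interpolations can be performed independently; this factorization $E_\kappa((x,y),(\xi,\eta))=E_{\kappa_1}(x,\xi)E_{\kappa_2}(y,\eta)$ and the corresponding splitting $h_\kappa(x,y)=h_{\kappa_1}(x)h_{\kappa_2}(y)$ is what makes the partial-regularity formulation meaningful in the Dunkl context, and it must be invoked explicitly.
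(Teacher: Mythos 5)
Your proposal follows essentially the same route as the paper: the paper proves this theorem by repeating the argument for Theorem \ref{Dunklshrodintro} (Littlewood--Paley reduction, rescaling to a unit-frequency block, Riesz--Thorin between the $(2,2)$, $(\infty,\infty)$ and $(\infty,2)$ endpoints, then $TT^*$ with Young/Hardy--Littlewood--Sobolev in $t$), with the only change being that for the phase $\phi(r)=r$ one has $\phi''\equiv 0$, so only the first bullet of Lemma \ref{fractionalinr} applies and the kernel decay drops from $(1+|(x,t)|)^{-N/2}$ to $(1+|(x,t)|)^{-(N-1)/2}$, which is exactly the source of the exponent $d+2\gamma_1-k-1$. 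Your identification of this one-power loss, of the partial-kernel estimate for frozen $|\eta|\le 2$, and of the product factorization $E_\kappa((x,y),(\xi,\eta))=E_{\kappa_1}(x,\xi)E_{\kappa_2}(y,\eta)$ matches the paper's (much terser) proof.
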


    \begin{rem} 
As an application of Theorems \ref{Dunklshrodintro} and \ref{Dunklwaveintro}, it is natural to investigate the well-posedness of the nonlinear higher-order Dunkl–Schrödinger equation (i.e., \eqref{DNLS1} with $m>1$) and the Dunkl–wave equation (i.e., \eqref{DNLS1} with $m=1$) with nonlinearities of the type \eqref{as1}, for partially regular initial data. This is in analogy with Theorem \ref{thmintowel} and with the Euclidean case studied in \cite{KLS23}. A crucial ingredient in establishing well-posedness in the partial regularity framework is the fractional generalized chain rule \cite{CW91, KLS23}. Since such a tool is not yet available in Dunkl analysis, an interesting problem in its own right, we plan to address the well-posedness of these Dunkl dispersive equations in this framework, incorporating the fractional chain rule, in a forthcoming paper. It is worth mentioning here that a first step toward a generalized Dunkl chain rule has been initiated in \cite{BM25} by establishing fractional Leibniz rules for the Dunkl operator.
 \end{rem}
	
	Apart from the introduction, the paper is organized as follows: In Section \ref{sec2}, we recall the Littlewood-Paley operator and some of its important properties. Section \ref{sec3} is devoted to proving Strichartz estimates for the class of dispersive semigroups $e^{it\Phi(D)}$ with regular initial data. As an application of the Strichartz estimates, we investigate the local well-posedness result for the nonlinear higher order Schr\"odinger equation (\ref{higher order}) with partially regular initial data in Section \ref{sec4}. In Section \ref{sec5}, first, we recall harmonic analysis associated with the Dunkl operator and prove  Strichartz estimates for the class of dispersive semigroups $e^{it(-\Delta_\kappa)^{\frac{m}{2}}}$ in the partial regularity framework. 
	
    In this paper, we denote by \( C \) a generic positive constant, which may vary from line to line. The notation \( A \lesssim B \) means that \( A \leq C B \) for some constant \( C > 0 \), while \( A \sim B \) indicates that \( C^{-1} B \leq A \leq C B \), with \( C > 0 \) again denoting an unspecified constant.

    \section{Preliminaries}\label{sec2}
    This section is devoted to presenting basic facts, function spaces, and preliminary tools that will be used throughout the paper.

As discussed in the introduction, we denote \( X = (x, y) \in \mathbb{R}^{d-k} \times \mathbb{R}^k \) for \( 1 \leq k \leq d \), with the understanding that when \( k = d \), the \( y \)-variable does not appear. We make use of mixed Lebesgue spaces of the form \( L^p_x(\mathbb{R}^{d-k}, L^q_y(\mathbb{R}^k)) \), which we also denote by \( L^p_xL^q_y(\mathbb{R}^{d-k} \times \mathbb{R}^k) \) or simply \( L^p_xL^q_y \). These spaces are equipped with the norm:
$$ \|f\|_{L^p_xL^q_y} := \left( \int_{\mathbb{R}^{d-k}} \left( \int_{\mathbb{R}^k}  |f(x, y)|^q\, dy \right)^{\frac{p}{q}} dx \right)^{\frac{1}{p}}.$$

As usual, we define the Fourier transform of a function $f=f(x, y)$ as 
$$(\mathcal{F}f)(\xi, \eta):= \widehat{f}(\xi, \eta):= \frac{1}{(2\pi)^d} \int_{\mathbb{R}^d}f(x, y) e^{-i (x. \xi+y.\eta)}\,dx\,dy.$$
On the other hand, the partial Fourier transform with respect to the $y$-variable is defined as 
$$(\mathcal{F}_yf)(x, \eta) :=\frac{1}{(2\pi)^k} \int_{\mathbb{R}^k}f(x, y) e^{-i y.\eta}\,dy. $$
In a similar fashion, the partial Fourier transform with respect to $x$-variable will be denoted by $\mathcal{F}_x.$
Next, we will introduce the mixed Sobolev-type spaces $L^p(\mathbb{R}_x^{d-k}, H^s(\mathbb{R}^{k}_y))$ of order $s \in \mathbb{R}$ via the following norm
$$\|f\|_{L^p_xH^s_y}:= \|(1-\Delta_y)^{\frac{1}{2}} f\|_{L^p_xL^2_y}= \|(1+|\eta|^2)^{\frac{s}{2}} f\|_{L^p_xL^2_y}. $$
    Furthermore, for any interval $I \subset \mathbb{R},$ we will make use of the mixed space-time space $L^q_t(I, L^p_x(\mathbb{R}^{d-k}, H^s_y(\mathbb{R}^{k})))$ which will be briefly denotes by $ L^q_t(I)L^p_xH_y^s$ or simply $L^q_tL^p_xH_y^s.$ These spaces are eqquiped with the norm 
$$\|F\|_{L^q_tL^p_xH_y^s}:= \left( \int_I \|F(t)\|^q_{L^p_xH^s_y}\, dt \right)^{\frac{1}{q}}.$$

    For suitable functions $f$ and $g$ defined on $\R^{d-k} \times \R^k,$ we denote by $f*_y g$ the convolution of $f$ and $g$ with respect to $y$-variable defined as 
$$f*_yg (x, y):= \int_{\R^k} f(x, y-y') g(x, y')\, dy'. $$

	Let $\psi: \mathbb{R}^{d} \rightarrow[0,1]$ be a radial smooth cut-off function supported in $$\{(\xi, \eta) \in\left.\mathbb{R}^{d-k} \times \mathbb{R}^k: \frac{1}{2} \leq|(\xi, \eta)| \leq 2\right\}$$ such that
	$$
	\sum_{j \in \mathbb{Z}} \psi\left(2^{-j} \xi, 2^{-j} \eta\right)=1.
	$$
	
 	For $j \in \mathbb{Z}$, the Littlewood-Paley operator $P_j$ is defined as follows:
	\begin{equation} \label{lpope}
	    \widehat{P_j f}(\xi, \eta)=\psi_j(\xi, \eta) \hat{f}(\xi, \eta),
	\end{equation}
	where $\psi_j(\xi, \eta):=\psi\left(2^{-j} \xi, 2^{-j} \eta\right)$  and its support is the set  $\left\{(\xi, \eta): 2^{j-1} \leq|(\xi, \eta)| \leq 2^{j+1}\right\}$.The Littlewood-Paley theorem  on mixed Lebesgue spaces $L_x^r L_y^{\widetilde{r}}\left(\mathbb{R}^{d-k} \times \mathbb{R}^k\right)$; for $1<r, \widetilde{r}<\infty$,
	is given by 	\begin{align}\label{norm}
	\|f\|_{L_x^r L_y^{\tilde{r}}} \lesssim\left\| \left(\sum_{j \in \mathbb{Z}} \left|P_j f\right|^2\right)^{1 / 2}\right\|_{L_x^r L_y^{\tilde{r}}} .
	\end{align}
	The    Littlewood-Paley operators    $P_j$ satisfy the following important properties. 
	\begin{itemize}
		\item Identity relation: $f= \displaystyle \sum_{j \in \mathbb{Z}} P_j f.$
		\item  Quasi-orthogonal relations: $P_jP_k=0$ if $|j-k|>1.$
		\item There exist constant $C>0$ such that $$C^{-1} \sum_{j \in \mathbb{Z}}\left\| P_j f\right\|_{L^2\left(\mathbb{R}^{d}\right)}^2 \leq\|f\|_{L^2\left(\mathbb{R}^{d}\right)}^2 \leq C \sum_{j \in \mathbb{Z}}\left\|P_j f\right\|_{L^2\left(\mathbb{R}^{d}\right)}^2.$$
		\item  Sobolev norm: $$\|f\|_{H^s\left(\mathbb{R}^{d}\right)}\sim   \left\|\{2^{js}\left\| P_j f\right\|_{L^2\left(\mathbb{R}^{d}\right)}\}_{j\in\mathbb{Z}}\right \|_{\ell^2(\mathbb{Z})} .$$
		Moreover, there exist constant $C>0$ such that
		$$C^{-1} \sum_{j \in \mathbb{Z}} 2^{2js}\left\| P_j f\right\|_{L^2\left(\mathbb{R}^{d}\right)}^2 \leq\|f\|_{H^s\left(\mathbb{R}^{d}\right)}^2 \leq C \sum_{j \in \mathbb{Z}}2^{2js}\left\|P_j f\right\|_{L^2\left(\mathbb{R}^{d}\right)}^2.$$
	\end{itemize}
	To establish one of the main results of this paper, we make use of the following classical lemma from the stationary phase method (see \cite[VIII, Section 5, B]{S2}). For convenience, we denote the Hessian matrix  $\left(\frac{\partial^2}{\partial \xi_i \partial \xi_j}\right)$ by $H$.
	\begin{lem}\label{eq8}
		 Let $\psi$ be a compactly supported smooth function on $\mathbb{R}^{d}$ and let $\phi$ be  a smooth function satisfying rank $H \phi \geq M$ on the support of $\psi$. Then, for $(x, t) \in \mathbb{R}^{d+1},$ we have 
		$$
		\left|\int_{\mathbb{R}^d} e^{i(x, t) \cdot(\xi, \phi(\xi))} \psi(\xi) d \xi\right| \leq C(1+|(x, t)|)^{-M/ 2} .
		$$
		\end{lem}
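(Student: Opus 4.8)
The plan is to prove the estimate by the method of stationary phase, treating it as a uniform oscillatory-integral bound after normalizing the size of the frequency vector. Write $\lambda := |(x,t)|$; for $\lambda \lesssim 1$ the integral is trivially bounded by $\|\psi\|_{L^1}$, which is absorbed into the constant, so it suffices to establish decay $\lesssim \lambda^{-M/2}$ for $\lambda \geq 1$. Setting $(x,t) = \lambda(\omega,\tau)$ with $(\omega,\tau)\in S^d$, the integral becomes $\int_{\mathbb{R}^d} e^{i\lambda\Psi_{\omega,\tau}(\xi)}\psi(\xi)\,d\xi$ with the parametrized real phase $\Psi_{\omega,\tau}(\xi) = \omega\cdot\xi + \tau\phi(\xi)$. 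The goal is then a bound $\lesssim \lambda^{-M/2}$ that is uniform over the compact parameter space $(\omega,\tau)\in S^d$.

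First I would localize using the rank hypothesis. Since $\mathrm{rank}\,H\phi \geq M$ at every point of the compact set $\mathrm{supp}\,\psi$, each such point has a neighborhood on which some fixed $M\times M$ minor of $H\phi$ has determinant bounded away from zero; by compactness finitely many such neighborhoods cover $\mathrm{supp}\,\psi$, and a subordinate smooth partition of unity reduces matters to a single piece. On that piece I split the coordinates $\xi = (\xi',\xi'')\in\mathbb{R}^M\times\mathbb{R}^{d-M}$ so that the block $\partial^2_{\xi'}\phi$ is invertible with $|\det \partial^2_{\xi'}\phi| \geq c_0 > 0$ throughout the piece (when $M=d$ there is no $\xi''$ and this step is vacuous).

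Next I would run a dichotomy on the parameter $\tau$. Fix $\epsilon>0$ small. In the non-stationary regime $|\tau|\leq\epsilon$, the gradient $\nabla_\xi\Psi_{\omega,\tau} = \omega + \tau\nabla\phi$ satisfies $|\nabla_\xi\Psi_{\omega,\tau}| \geq \sqrt{1-\epsilon^2} - \epsilon\sup_{\mathrm{supp}\,\psi}|\nabla\phi| \geq c > 0$ once $\epsilon$ is chosen small, since $|\omega| = \sqrt{1-\tau^2}$ is then close to $1$. Repeated integration by parts against the transport operator $L = (i\lambda|\nabla\Psi|^2)^{-1}\nabla\Psi\cdot\nabla$, which fixes $e^{i\lambda\Psi}$, yields decay $\lesssim_N \lambda^{-N}$ for every $N$, in particular $\lesssim \lambda^{-M/2}$. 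In the stationary regime $|\tau|\geq\epsilon$, the Hessian in the $\xi'$-variables is $\partial^2_{\xi'}\Psi_{\omega,\tau} = \tau\,\partial^2_{\xi'}\phi$, whose determinant equals $\tau^M\det\partial^2_{\xi'}\phi$ and is therefore bounded below by $\epsilon^M c_0 > 0$ uniformly in the parameters. Writing the integral as iterated, $\int_{\mathbb{R}^{d-M}}\big(\int_{\mathbb{R}^M} e^{i\lambda\Psi}\psi\,d\xi'\big)\,d\xi''$, I apply the standard nondegenerate stationary-phase estimate in the $M$ inner variables to bound the inner integral by $C\lambda^{-M/2}$, uniformly in $\xi''$ and in $(\omega,\tau)$; integrating this bound over the compactly supported $\xi''$ gives $\lesssim \lambda^{-M/2}$. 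Assembling the finitely many partition pieces and the two regimes completes the estimate.

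The main obstacle is the uniformity of the $M$-dimensional nondegenerate stationary-phase bound: the critical point of $\Psi$ in $\xi'$ (if present) may move with $(\omega,\tau,\xi'')$ and could approach the boundary of the support, and for some parameter values there may be no critical point at all. The key point is that only the lower bound $|\det\partial^2_{\xi'}\Psi|\geq \epsilon^M c_0$, together with uniform control of finitely many derivatives of $\Psi$ and $\psi$ (all automatic here since $|\omega|,|\tau|\leq 1$ and $\phi,\psi$ are smooth on a compact set), is needed to produce the constant $C$ in the $\lambda^{-M/2}$ bound; this is exactly the content of the cited nondegenerate stationary-phase lemma, whose usual proof localizes near the critical point via the Morse lemma and uses nonstationary phase elsewhere, with all constants depending only on these data. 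Granting this uniform inner-block estimate, the remaining steps are routine.
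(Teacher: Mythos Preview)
The paper does not give its own proof of this lemma; it is stated as a classical stationary-phase estimate with a reference to Stein \cite[VIII, Section~5,~B]{S2}. Your argument is essentially the standard proof found in such references and is correct in outline.

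One small technical point worth tightening: the passage from ``some $M\times M$ minor of $H\phi$ is nonzero'' to ``split $\xi=(\xi',\xi'')$ with $\partial^2_{\xi'}\phi$ invertible'' tacitly assumes the minor is \emph{principal}, which the rank hypothesis alone does not guarantee (for instance $\left(\begin{smallmatrix}0&1\\1&0\end{smallmatrix}\right)$ has rank~$2$ but both $1\times1$ principal minors vanish). This is easily repaired: on each partition piece first perform a linear change of the $\xi$-variables diagonalizing $H\phi$ at a reference point, after which the first $M$ coordinate directions furnish the desired nondegenerate block; the phase retains the form $\omega'\cdot\xi+\tau\phi'(\xi)$, and your dichotomy in $|\tau|$ together with the uniform inner stationary-phase bound proceed verbatim.
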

        
		\section{Strichartz estimates for the dispersive semigroup $e^{it\Phi(D)}$} \label{sec3} 
	In this section, we focus on establishing Strichartz estimates for the Schrödinger propagator $e^{it\Phi(D)}$ associated with the operator $\Phi(D)$ in the framework of partial regularity. Before presenting the main result, we first recall a few preparatory lemmas. The following lemma concerns the fixed-ime decay estimate of the Schrödinger propagator $e^{it\Phi(D)}$ on mixed-norm Lebesgue spaces.
	\begin{lem}\label{eq6}
		Let $d \geq 2 $ and $1 \leq k \leq d$.  Let $\Phi \in C^{\infty}\left(\mathbb{R}^{d} \backslash\{0\}\right)$ be a real-valued function satisfying Assumptions (A).  Assume that $2 \leq \widetilde{r} \leq r \leq \infty$. Then, for $(x, y) \in \mathbb{R}^{d-k} \times \mathbb{R}^k,$ we have the following estimate:
		\begin{align}\label{eq5}
			\left\|e^{i t\Phi(D)} P_0 f\right\|_{L_x^r L_y^{\tilde{r}}} \lesssim(1+|t|)^{-\beta(r, \widetilde{r})}\|f\|_{L_x^{r^{\prime}} L_y^{\tilde{r}^{\prime}}}, 
		\end{align}
		where $P_0$ is the Littlewood-Paley operator given in \eqref{lpope} and $$\beta(r, \widetilde{r}):=(M-k)\left(\frac{1}{2}-\frac{1}{r}\right)+k\left(\frac{1}{2}-\frac{1}{\widetilde{r}}\right).$$
	\end{lem}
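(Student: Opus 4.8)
The plan is to reduce the dispersive (mixed-norm) decay estimate \eqref{eq5} to a kernel bound for the frequency-localized propagator, and then to the stationary phase Lemma \ref{eq8}. First I would write
$$
e^{it\Phi(D)}P_0 f = K_t *f, \qquad \widehat{K_t}(\xi,\eta)=e^{it\Phi(\xi,\eta)}\psi(\xi,\eta),
$$
so that $K_t(x,y)=\int_{\mathbb{R}^{d-k}\times\mathbb{R}^k} e^{i(x\cdot\xi+y\cdot\eta)+it\Phi(\xi,\eta)}\psi(\xi,\eta)\,d\xi\,d\eta$. Two pieces of information are needed: the trivial $L^1\to L^\infty$-type bound $\|K_t\|_{L^\infty_{x,y}}\lesssim 1$ (which already follows from compact support of $\psi$, taking $t$ bounded) together with the time decay for $|t|\geq 1$, and a partial bound in the $y$-variable that captures the stronger decay $k(\tfrac12-\tfrac1{\widetilde r})$. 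The key point is that, by Assumption (A)(4), the full Hessian $H\Phi$ has rank $\geq M$ on $\mathrm{supp}\,\psi$, so Lemma \ref{eq8} applied with $\phi=\Phi$ (writing the phase as $(x,y,t)\cdot(\xi,\eta,\Phi(\xi,\eta))$ after a rotation/affine change putting $\Phi$ in the last coordinate — here one uses homogeneity (A)(3) and (A)(1) to arrange this on the annulus) gives
$$
|K_t(x,y)|\lesssim (1+|(x,y,t)|)^{-M/2}\lesssim (1+|t|)^{-M/2}.
$$
This yields the endpoint $(r,\widetilde r)=(\infty,\infty)$: $\|e^{it\Phi(D)}P_0 f\|_{L^\infty_{x,y}}\lesssim (1+|t|)^{-M/2}\|f\|_{L^1_{x,y}}$, matching $\beta(\infty,\infty)=M/2$.

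Next I would obtain the other endpoint, where we only ask decay in the $y$-slices. Freeze $x$ and regard $K_t(x,\cdot)$ as a convolution kernel on $\mathbb{R}^k$; its partial Fourier transform in $\eta$ is $\int_{\mathbb{R}^{d-k}} e^{ix\cdot\xi+it\Phi(\xi,\eta)}\psi(\xi,\eta)\,d\xi$, and for fixed $|\eta|\le 2$ Assumption (A)(5) guarantees that the Hessian of $\Phi(\cdot,\eta)$ (equivalently the relevant block) has rank $\geq M-k$; applying Lemma \ref{eq8} in the $\xi$-variable gives $|\text{(that integral)}|\lesssim (1+|(x,t)|)^{-(M-k)/2}$, and then a further application in the remaining $k$ directions (now using that the full rank is $\geq M$, so the complementary block has rank $\geq k$ once $|\eta|\lesssim 1$) produces
$$
\sup_x \|K_t(x,\cdot)\|_{L^\infty_y}\lesssim (1+|t|)^{-(M-k)/2}\cdot(1+|t|)^{-k/2}
$$
after a Schur-type / Young argument in $x$; the cleaner route is to prove directly $\|e^{it\Phi(D)}P_0 f\|_{L^\infty_x L^\infty_y}$ and $\|e^{it\Phi(D)}P_0 f\|_{L^2_x L^\infty_y}\lesssim (1+|t|)^{-(M-k)/2}\|f\|_{L^2_x L^1_y}$, the latter by Plancherel in $x$ combined with the $L^1_y\to L^\infty_y$ bound $(1+|t|)^{-(M-k)/2}$ coming from the $(M-k)$-rank Hessian in the $y$-frequency directions. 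Finally I would interpolate. The trivial bound $\|e^{it\Phi(D)}P_0 f\|_{L^2_x L^2_y}\lesssim \|f\|_{L^2_x L^2_y}$, the bound $\|e^{it\Phi(D)}P_0 f\|_{L^\infty_x L^2_y}\lesssim (1+|t|)^{-(M-k)/2}\|f\|_{L^1_x L^2_y}$ (decay only in $x$), and $\|e^{it\Phi(D)}P_0 f\|_{L^\infty_x L^\infty_y}\lesssim (1+|t|)^{-M/2}\|f\|_{L^1_x L^1_y}$ together span, via bilinear/multilinear Riesz–Thorin interpolation in the pairs $(r,r')$ and $(\widetilde r,\widetilde r')$, exactly the family \eqref{eq5} with exponent $\beta(r,\widetilde r)=(M-k)(\tfrac12-\tfrac1r)+k(\tfrac12-\tfrac1{\widetilde r})$; one has to check the constraint $2\le\widetilde r\le r\le\infty$ is precisely what is needed for the interpolation parameters to be admissible.

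I expect the main obstacle to be the second endpoint, namely rigorously extracting the split decay rate $(1+|t|)^{-(M-k)/2}$ on the $L^2_x L^\infty_y$ (equivalently $L^2_x L^2_y\to L^2_x L^\infty_y$) level. The subtlety is that one is applying stationary phase \emph{after} a partial Fourier transform/Plancherel in $x$, so the oscillatory integral that must be estimated is over the $k$-dimensional $\eta$-variable with $\xi$ essentially fixed, and one must know that the $\eta$-Hessian block of $\Phi$ has rank $\geq M-k$ uniformly — this is exactly what Assumption (A)(5) is designed to supply, with the localization $|\eta|\le 2$ coming from the support of $\psi_0$. Care is also needed because $\Phi$ is only smooth away from the origin and homogeneous of degree $m$; on the Littlewood–Paley annulus $\{\tfrac12\le|(\xi,\eta)|\le 2\}$ this is harmless, and the passage from $P_0$ to general $P_j$ (not needed in this lemma but used afterward) is handled by scaling. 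A minor bookkeeping point is justifying the reduction of the phase $it\Phi(\xi,\eta)+i(x\cdot\xi+y\cdot\eta)$ to the graph form $(x,y,t)\cdot(\xi,\eta,\Phi(\xi,\eta))$ required by Lemma \ref{eq8}; since $t$ is a scalar and enters linearly, this is immediate, and one simply treats $t$ as the last spatial coordinate and $\Phi$ as the function $\phi$, so the rank hypothesis ``$\mathrm{rank}\,H\phi\ge M$ on $\mathrm{supp}\,\psi$'' is met by Assumption (A)(4).
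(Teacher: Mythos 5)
Your overall strategy coincides with the paper's: reduce to three endpoints --- $(r,\widetilde r)=(2,2)$ by Plancherel, $(\infty,\infty)$ by Young plus the stationary-phase bound $|K_t(x,y)|\lesssim(1+|(x,y,t)|)^{-M/2}$ from Lemma \ref{eq8} and Assumption (A)(4), and $(\infty,2)$ via the partial Fourier transform in $\eta$ --- and then interpolate; the three estimates you list in your final paragraph are exactly the ones the paper proves, and the constraint $2\le\widetilde r\le r\le\infty$ is indeed what places $(1/r,1/\widetilde r)$ in the interpolation triangle. The first half of your treatment of the mixed endpoint (compute $\widetilde K(x,\eta,t)=\int_{\mathbb{R}^{d-k}}e^{ix\cdot\xi+it\Phi(\xi,\eta)}\psi(\xi,\eta)\,d\xi$, apply Lemma \ref{eq8} in $\xi$ for fixed $|\eta|\le2$ using (A)(5) to get $\sup_{|\eta|\le2}|\widetilde K(x,\eta,t)|\lesssim(1+|(x,t)|)^{-(M-k)/2}$, then Plancherel in $y$ and Young in $x$) is precisely the paper's Case (c).

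However, the middle of your argument contains a genuine confusion that you should not let stand. Your ``cleaner route'' asserts $\|e^{it\Phi(D)}P_0f\|_{L^2_xL^\infty_y}\lesssim(1+|t|)^{-(M-k)/2}\|f\|_{L^2_xL^1_y}$ ``by Plancherel in $x$ combined with the $L^1_y\to L^\infty_y$ bound coming from the $(M-k)$-rank Hessian in the $y$-frequency directions,'' and your closing paragraph repeats that the relevant oscillatory integral is over the $k$-dimensional $\eta$-variable with the $\eta$-Hessian block of rank $\ge M-k$. This swaps the roles of the two blocks: Assumption (A)(5) controls the rank of the Hessian of $\xi\mapsto\Phi(\xi,\eta)$ for fixed $|\eta|\le2$, not of the $\eta$-block, so the Plancherel must be taken in $y$ and the $L^1\to L^\infty$ kernel bound in $x$. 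The stated $(2,\infty)$ estimate is also inconsistent with the lemma itself ($\beta(2,\infty)=k/2$, not $(M-k)/2$) and lies outside the admissible range $\widetilde r\le r$. Likewise, your parenthetical claim that ``the complementary block has rank $\ge k$'' does not follow from (A): knowing $\operatorname{rank}H\Phi\ge M$ and that the $\xi$-block has rank $\ge M-k$ gives no lower bound on the $\eta$-block (the $\xi$-block could already carry all of the rank), so the factorized decay $(1+|t|)^{-(M-k)/2}(1+|t|)^{-k/2}$ for $\sup_x\|K_t(x,\cdot)\|_{L^\infty_y}$ is unjustified (and unnecessary, since the $(\infty,\infty)$ endpoint already gives $M/2$). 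If you delete these detours and keep only the first derivation of the $(\infty,2)$ endpoint together with your final list of three estimates, the proof matches the paper's.
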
 
	\begin{proof} In order to get the estimate (\ref{eq5}), we will apply  the Riesz-Thorin interpolation theorem. Thus, it is enough to obtain estimate (\ref{eq5})  for the following three cases:
 \begin{enumerate}[\textbf{Case} (a)]
 \item $r=\widetilde{r}=2$,
 \item  $r=\widetilde{r}=\infty$,
 \item  $r=\infty$ and $\tilde{r}=2$.
 \end{enumerate}
 
 It is clear that \textbf{Case} (a) follows directly from the Plancherel theorem of the Fourier transform. 
        
        For \textbf{Case} (b), that is, when  $r=\widetilde{r}=\infty$,  considering Fourier transform and its inversion formula, we can write
		$$
		\begin{aligned}
			e^{i t\Phi(D)} P_0 f(x, y) & =\int_{\mathbb{R}^{d-k}} \int_{\mathbb{R}^k} K\left(x-x^{\prime}, y-y^{\prime}, t\right) f\left(x^{\prime}, y^{\prime}\right) d y^{\prime} d x^{\prime} \\
			& =(K * f)(x,y),
		\end{aligned}
		$$
		where $P_0$ is given in \eqref{lpope} and the kernel $K$ is given by 
		\begin{align}\label{eq9}
			K(x, y, t)= \frac{2}{(2\pi)^d}\int_{\mathbb{R}^{d-k}} \int_{\mathbb{R}^k} e^{i(x, y) \cdot(\xi, \eta)} e^{i t \Phi(\xi, \eta)} \psi(\xi, \eta) d \eta d \xi.  
		\end{align}
		Using  Young's inequality, we get
		\begin{align}\label{Kernel estimate 1}
			\left\|e^{i t\Phi(D)} P_0 f\right\|_{L_{x, y}^{\infty}}=\left\|K * f\right\|_{L_{x, y}^{\infty}} \leq\left\|K\right\|_{L_{x, y}^{\infty}}\|f\|_{L_{x, y}^1} .
		\end{align}
		Now, we will estimate $L_{x, y}^{\infty}$  norm of the kernel $K$.    From our assumption on $\Phi$,  rank $H \Phi(\xi, \eta) \geq M$ on  the support $\left\{(\xi, \eta) \in \mathbb{R}^{d-k} \times \mathbb{R}^k: 1 / 2<|(\xi, \eta)|<2\right\}$ of $\psi$ in (\ref{eq9}),  by  Lemma \ref{eq8},    (\ref{eq9})  can be estimate in the following way:
		$$
		\begin{aligned}
			\left|K(x, y, t)\right| & =\left|\int_{\mathbb{R}^{d-k}} \int_{\mathbb{R}^k} e^{i(x, y, t) \cdot\left(\xi, \eta,\Phi(\xi, \eta)\right)} \psi(\xi, \eta) d \eta d \xi\right| \\
			& \lesssim(1+|(x, y, t)|)^{-\frac{M}{2}} \\
			& \lesssim(1+|t|)^{-\frac{M}{2}}.
		\end{aligned}
		$$
		Then from the inequality \eqref{Kernel estimate 1}, we get 
		$$		\left\|e^{i t\Phi(D)} P_0 f\right\|_{L_{x, y}^{\infty}}=\left\|K * f\right\|_{L_{x, y}^{\infty}}   \lesssim(1+|t|)^{-\beta(\infty, \infty)} \|f\|_{L_{x, y}^1} $$
		with $ \beta(\infty, \infty)=M / 2 \geq 0$ and   this concludes {\bf Case} (b). 
		
		For \textbf{Case} (c), that is, when $r=\infty$ and $\tilde{r}=2$.  Using Minkowski's inequality and Plancherel's theorem with respect to $y$ variable, we get 
		\begin{align}\label{Kernel 2}\nonumber
			\left\|e^{i t\Phi(D)} P_0 f\right\|_{L_x^{\infty} L_y^2} & =\left\|\| K * f \|_{L_y^2}\right\|_{L_x^{\infty}} \\\nonumber
			& \leq\left\|\int_{\mathbb{R}^{d-k}} \| K\left(x-x^{\prime}, \cdot\right) *_y f\left(x^{\prime}, \cdot\right) \|_{L_y^2} d x^{\prime}\right\|_{L_x^{\infty}} \\
			& =\left\|\int_{\mathbb{R}^{d-k}} \| \widetilde{K}\left(x-x^{\prime}, \cdot\right) \tilde{f}\left(x^{\prime}, \cdot\right) \|_{L_\eta^2} d x^{\prime}\right\|_{L_x^{\infty}},
		\end{align}
		where  $\tilde{f}=\mathcal{F}_y(f(x, \cdot))$ denotes the  Fourier transform of $f$ with respect to  the   $y \in \mathbb{R}^k$ variable. Let us denote    $\tilde{K}=\mathcal{F}_y(K(x, \cdot))$. Then    $\tilde{K}$  can be written explicitely as 
		$$
		\widetilde{K}(x, \eta, t)=\frac{1}{(2 \pi)^{d-k}} \int_{\mathbb{R}^{d-k}} e^{i (x, t) \cdot (\xi, \Phi(\xi, \eta))} \psi(\xi, \eta) d \xi . 
		$$
		Again applying Lemma \ref{eq8} with phase functions $\Phi(\xi, \eta)$ for fixed $|\eta| \leq 2$ with our assumption  that  rank $H{\Phi}(\xi, \eta) \geq M-k,$ we  get
		\begin{align}\label{eq7}
			\sup _{|\eta| \leq 2}\left|\widetilde{K}(x, \eta, t)\right| \lesssim(1+|(x, t)|)^{-\frac{M-k}{2}} .
		\end{align}
	Coming back to the inequality \eqref{Kernel 2},	using  the estimate  \eqref{eq7} and   Young's inequality,  we obtain 
		$$
		\begin{aligned}
			\left\|e^{i t\Phi(D)} P_0 f\right\|_{L_x^{\infty} L_y^2} &\leq	\left\|\int_{\mathbb{R}^{d-k}} \| \widetilde{K}\left(x-x^{\prime}, \cdot\right) \tilde{f}\left(x^{\prime}, \cdot\right) \|_{L_\eta^2} d x^{\prime}\right\|_{L_x^{\infty}}\\
			&	\lesssim 	\left\|\int_{\mathbb{R}^{d-k}}  (1+|(x-x', t)|)^{-\frac{M-k}{2}}\|   \tilde{f}\left(x^{\prime}, \cdot\right) \|_{L_\eta^2} d x^{\prime}\right\|_{L_x^{\infty}}\\ & \lesssim\left\|(1+|(\cdot, t)|)^{-\frac{M-k}{2}} *_x \| \tilde{f} \|_{L_\eta^2}\right\|_{L_x^{\infty}} \\
			& \lesssim(1+|t|)^{-\frac{M-k}{2}}\left \| \| \tilde{f} \|_{L_\eta^2}\right\|_{L_x^1} \\
			& \leq(1+|t|)^{-\frac{M-k}{2}}\|f\|_{L_x^1 L_y^2}\\
			& =(1+|t|)^{-\beta(\infty, 2)}\|f\|_{L_x^1 L_y^2} 
		\end{aligned}
		$$
		with $ \beta(\infty, 2)=\frac{M-k}{2} \geq 0$. This completes the proof of the lemma.  
	\end{proof}
	Next we assuming the above decay estimates, we have the following  frequency localized estimates
	\begin{lem} \label{Loc} Let $\Phi \in C^{\infty}\left(\mathbb{R}^{d} \backslash\{0\}\right)$ be a real-valued function satisfying Assumptions (A). Assume that $d \geq 1,1 \leq k \leq d$, and
		\begin{align*}
			\frac{2}{q} \leq(M-k)\left(\frac{1}{2}-\frac{1}{r}\right)+k\left(\frac{1}{2}-\frac{1}{\widetilde{r}}\right), \quad 2 \leq \widetilde{r} \leq r<\infty, \quad 2\leq q \leq \infty. 
		\end{align*}
		Then, we have  
		\begin{align}\label{eq4}
			\left\|e^{i t\Phi(D)} P_0 f\right\|_{L_t^q L_x^r L_y^{\tilde{\tau}}} \lesssim\|f\|_{L_{x, y}^2}.
		\end{align}
		
	\end{lem}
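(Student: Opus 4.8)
The plan is to derive \eqref{eq4} from the fixed-time dispersive decay of Lemma \ref{eq6} by the standard $TT^{*}$ (duality) argument, converting the pointwise-in-time decay into a space-time bound via the one-dimensional Hardy--Littlewood--Sobolev inequality.

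Write $U(t):=e^{it\Phi(D)}P_{0}$. By $TT^{*}$ duality, the estimate $\|U(t)f\|_{L^{q}_{t}L^{r}_{x}L^{\tilde r}_{y}}\lesssim\|f\|_{L^{2}_{x,y}}$ is equivalent to
\[
\Bigl\|\int_{\mathbb{R}}U(t)U(s)^{*}F(s)\,ds\Bigr\|_{L^{q}_{t}L^{r}_{x}L^{\tilde r}_{y}}\lesssim\|F\|_{L^{q'}_{t}L^{r'}_{x}L^{\tilde r'}_{y}}.
\]
Since $\Phi$ is real-valued, $e^{it\Phi(D)}$ is unitary with adjoint $e^{-it\Phi(D)}$; since $\psi$ is real, $P_{0}$ is self-adjoint; and being Fourier multipliers these operators commute, so $U(t)U(s)^{*}=e^{i(t-s)\Phi(D)}P_{0}^{2}$. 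As $\widehat{P_{0}^{2}g}=\psi_{0}^{2}\widehat{g}$ with $\psi_{0}^{2}$ a smooth cutoff supported in the same annulus $\{\tfrac12\le|(\xi,\eta)|\le2\}$, the proof of Lemma \ref{eq6} applies verbatim with $P_{0}$ replaced by $P_{0}^{2}$ (for $d=1=k$ this is just the classical van der Corput estimate), giving
\[
\bigl\|U(t)U(s)^{*}F(s)\bigr\|_{L^{r}_{x}L^{\tilde r}_{y}}\lesssim(1+|t-s|)^{-\beta(r,\tilde r)}\,\|F(s)\|_{L^{r'}_{x}L^{\tilde r'}_{y}}.
\]

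Applying Minkowski's integral inequality in the spatial variables and then the displayed decay, it suffices to prove the scalar convolution bound
\[
\Bigl\|\int_{\mathbb{R}}(1+|t-s|)^{-\beta(r,\tilde r)}g(s)\,ds\Bigr\|_{L^{q}_{t}(\mathbb{R})}\lesssim\|g\|_{L^{q'}_{t}(\mathbb{R})},\qquad g(s):=\|F(s)\|_{L^{r'}_{x}L^{\tilde r'}_{y}}.
\]
For $2<q<\infty$ the hypothesis $\beta(r,\tilde r)\ge 2/q$ lets us dominate the kernel by $(1+|t-s|)^{-2/q}\le|t-s|^{-2/q}$, and the one-dimensional Hardy--Littlewood--Sobolev inequality applies with exponent $\lambda=2/q\in(0,1)$, which is exactly admissible since $\tfrac1q=\tfrac1{q'}+\tfrac2q-1$. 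For $q=\infty$ (so $q'=1$) the kernel is bounded by $1$ and the bound is immediate; for $q=2$ with $\beta(r,\tilde r)>1$ the kernel lies in $L^{1}_{t}$ and Young's inequality $L^{1}\ast L^{2}\to L^{2}$ applies, while the borderline case $q=2,\ \beta(r,\tilde r)=1$ is the Keel--Tao endpoint, obtained from the abstract argument of \cite{KT98} applied to the decay estimate of Lemma \ref{eq6}. Undoing the $TT^{*}$ reduction yields \eqref{eq4}.

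The only genuinely delicate point is the critical balance $\beta(r,\tilde r)=2/q$, where the time-convolution estimate must come from the sharp Hardy--Littlewood--Sobolev inequality rather than a soft Young-type bound, together with the associated $q=2$ endpoint; away from this the argument is the routine $TT^{*}$ machinery and the reduction to Lemma \ref{eq6}.
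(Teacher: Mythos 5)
Your proof is correct and follows essentially the same route as the paper: the $TT^{*}$ reduction, the fixed-time decay of Lemma \ref{eq6}, and then Young / Hardy--Littlewood--Sobolev for the resulting one-dimensional convolution in time. The only substantive differences are refinements in your favor: you correctly note that the $TT^{*}$ kernel involves $P_0^2$ rather than $P_0$ (harmless, since $\psi^2$ is a cutoff on the same annulus), and you explicitly treat the endpoint $q=2$, $\beta(r,\widetilde r)=1$ via the Keel--Tao abstract argument, a case that the paper's three-way case analysis silently omits (it is not needed for Theorem \ref{eq13}, which assumes $q>2$).
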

	\begin{proof}
		By   a standard $T T^*$ argument,  the required estimate (\ref{eq4}) is equivalent to
		$$
		\left\|\int_{\mathbb{R}} e^{i(t-\tau)\Phi(D)} P_0 g(\tau,\cdot) d \tau\right\|_{L_t^q L_x^r L_y^{\tilde{r}}} \lesssim\|g\|_{L_t^{q^{\prime}} L_x^{r^{\prime}} L_y^{\tilde{r}^{\prime}}}.
		$$
		Thus our aim is to prove the above estimate. By  the estimate \eqref{eq5} in Lemma \ref{eq6}, we obtain
		$$
		\begin{aligned}
			\left\|\int_{\mathbb{R}} e^{i(t-\tau)\Phi(D)} P_0 g(\tau,\cdot) d \tau\right\|_{L_t^q L_x^r L_y^{\tilde{r}}} & \leq\left\|\int_{\mathbb{R}} \| e^{i(t-\tau)\Phi(D)} P_0 g(\tau,\cdot) \|_{L_x^r L_y^{\tilde{r}}} d \tau\right\|_{L_t^q} \\
			& \lesssim\left\|\int_{\mathbb{R}}  (1+|t-\tau|)^{-\beta(r, \widetilde{r})}\|g(\tau,\cdot)\|_{L_x^{r^{\prime}} L_y^{\tilde{r}^{\prime}}} d \tau\right\|_{L_t^q} \\
			& \lesssim\left\|(1+|\cdot|)^{-\beta(r, \widetilde{r})} *_t \| g \|_{L_x^{r^{\prime}} L_y^{\tilde{r}^{\prime}}}\right\|_{L_t^q}.
		\end{aligned}
		$$
		From the notation of $\beta(r, \tilde{r})$,  the inequality $	\frac{2}{q} \leq(M-k)\left(\frac{1}{2}-\frac{1}{r}\right)+k\left(\frac{1}{2}-\frac{1}{\widetilde{r}}\right)$  reduces to   $2 / q \leq \beta(r, \tilde{r})$. We now consider three cases.
		\begin{itemize}
			\item When $2 / q<\beta(r, \tilde{r})$ and $2 \leq q \leq \infty$:    Then Young's inequality yields
			$$
			\begin{aligned}
				\left\|(1+|\cdot|)^{-\beta(r, \widetilde{r})} *_t \| g \|_{L_x^{r^{\prime}} L_y^{\widetilde{r}}}\right\|_{L_t^q} & \lesssim\left\|(1+|\cdot|)^{-\beta(r, \widetilde{r})}\right\|_{L_t^q}\|g\|_{L_t^{q^{\prime}} L_x^{r^{\prime}} L_y^{\widetilde{r}}} \\
				& \lesssim\|g\|_{L_t^{q^{\prime}} L_x^{r^{\prime}} L_y^{\widetilde{r}^{\prime}}}.
			\end{aligned}
			$$
			\item 	When $2 / q=\beta(r, \tilde{r})$ with $2<q<\infty$: Applying one-dimensional Hardy-Littlewood Sobolev inequality, we get 
			$$
			\begin{aligned}
				\left\|(1+|\cdot|)^{- \beta(r, \tilde{r})} *_t \| g \|_{L_x^{r^{\prime}} L_y^{\tilde{r}^{\prime}}}\right\|_{L_t^q} & \lesssim\left\||\cdot|^{- \beta(r, \tilde{r})} *_t \| g \|_{L_x^{r^{\prime}} L_y^{\tilde{r}^{\prime}}}\right\|_{L_t^q} \\
				& \lesssim\|g\|_{L_t^{q^{\prime}} L_x^{\tau_x^{\prime}} L_y^{\tilde{r}^{\prime}}}.
			\end{aligned}
			$$
			\item 	When  $q=\infty$ and $ \beta(r, \widetilde{r})=0$: This implies  that  $r=\widetilde{r}=2$. In this case, the required estimate (\ref{eq5})   holds trivially by Plancherel's theorem. This completes the proof of the lemma.
		\end{itemize} \end{proof}

	Now we are in a position to prove  Strichartz estimates in the partial regularity framework.
	\begin{theorem} \label{eq13} Let $\Phi \in C^{\infty}\left(\mathbb{R}^{d} \backslash\{0\}\right)$ be a real-valued function satisfying Assumptions (A).
		Let $d \geq 1,  1 \leq k \leq d, 2 \leq \widetilde{r} \leq r<\infty $ and $ 2<q \leq \infty$. If $ q, r, \tilde{r}$ satisfy 
		\begin{align}\label{eq0}
			\frac{2}{q} \leq(M-k)\left(\frac{1}{2}-\frac{1}{r}\right)+k\left(\frac{1}{2}-\frac{1}{\widetilde{r}}\right). 
		\end{align}
		Then we have
		\begin{align}\label{eq1}
			\left\|e^{i t\Phi(D)} f\right\|_{L_t^q\left(\mathbb{R} ; L_x^r\left(\mathbb{R}^{d-k} ; L_y^{\tilde{r}}\left(\mathbb{R}^k\right)\right)\right)} \lesssim\|f\|_{\dot{H}_{x, y}^s}
		\end{align}
		under the scaling condition
		\begin{align}\label{eq2}
			\frac{{{m}}}{q}=-s +(d-k)\left(\frac{1}{2}-\frac{1}{r}\right)+k\left(\frac{1}{2}-\frac{1}{\widetilde{r}}\right),
		\end{align} where $m \geq 1$ is the homogeneous degree of $\Phi.$
		
	\end{theorem}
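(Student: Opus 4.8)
The plan is to combine the frequency-localized estimate from Lemma \ref{Loc} with a Littlewood--Paley decomposition and a rescaling argument. First I would fix the scaling: for $j \in \Z$, applying the dilation $(\xi,\eta) \mapsto (2^j \xi, 2^j \eta)$ and using the homogeneity $\Phi(\lambda(\xi,\eta)) = \lambda^m \Phi(\xi,\eta)$ from Assumption (A)(3), one checks that $e^{it\Phi(D)} P_j f$ is, up to the natural rescaling of variables, $e^{i(2^{jm}t)\Phi(D)} P_0 f_j$ where $f_j$ is a rescaled copy of $P_j f$. Tracking the Jacobians through the $L^q_t L^r_x L^{\tilde r}_y$ norm, Lemma \ref{Loc} applied at scale $j=0$ yields a bound of the form
\begin{align*}
\left\|e^{it\Phi(D)} P_j f\right\|_{L^q_t L^r_x L^{\tilde r}_y} \lesssim 2^{j\sigma}\|P_j f\|_{L^2_{x,y}},
\end{align*}
where the exponent is $\sigma = -\frac{m}{q} + (d-k)\left(\frac12 - \frac1r\right) + k\left(\frac12 - \frac1{\tilde r}\right)$; by the scaling hypothesis \eqref{eq2} this is exactly $\sigma = s$. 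Note that the admissibility condition \eqref{eq0} is precisely what is needed to invoke Lemma \ref{Loc} (for the allowed range $2 < q \le \infty$, which is contained in its hypothesis $2 \le q \le \infty$).

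Next I would pass from the single-block estimate to the full estimate. Using the Littlewood--Paley inequality \eqref{norm} on the mixed space $L^r_x L^{\tilde r}_y$ (valid since $1 < \tilde r \le r < \infty$), the Minkowski inequality to move the $\ell^2_j$ sum outside the $L^q_t$ norm (here one uses $q \ge 2$, so $L^q_t(\ell^2_j) \hookrightarrow \ell^2_j(L^q_t)$ in the appropriate direction — more precisely, $\|(\sum_j |a_j|^2)^{1/2}\|_{L^q_t} \le (\sum_j \|a_j\|_{L^q_t}^2)^{1/2}$), and then the per-block bound, I would obtain
\begin{align*}
\left\|e^{it\Phi(D)} f\right\|_{L^q_t L^r_x L^{\tilde r}_y} \lesssim \left(\sum_{j \in \Z} \left\|e^{it\Phi(D)} P_j f\right\|_{L^q_t L^r_x L^{\tilde r}_y}^2\right)^{1/2} \lesssim \left(\sum_{j\in\Z} 2^{2js}\|P_j f\|_{L^2_{x,y}}^2\right)^{1/2}.
\end{align*}
By the Littlewood--Paley characterization of the homogeneous Sobolev norm, the last quantity is comparable to $\|f\|_{\dot H^s_{x,y}}$, which is the desired conclusion \eqref{eq1}.

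The step I expect to require the most care is the Minkowski/Littlewood--Paley exchange of norms in the mixed-norm setting: one must be sure that the vector-valued Littlewood--Paley inequality \eqref{norm} genuinely holds on $L^r_x L^{\tilde r}_y$ and that the order in which the $\ell^2_j$, $L^q_t$, $L^r_x$, $L^{\tilde r}_y$ norms are interchanged is consistent with the inequalities $2 \le q$, $2 \le \tilde r \le r < \infty$ (so that each interchange loses nothing or goes in the favorable direction). A secondary technical point is the bookkeeping of dilation Jacobians in the rescaling step, which must reproduce the exponent in \eqref{eq2} exactly — any discrepancy there would break the summability of the dyadic series. Everything else (the $TT^*$ reduction and decay estimate) has already been packaged into Lemmas \ref{eq6} and \ref{Loc}, so the proof is essentially an assembly of these ingredients.
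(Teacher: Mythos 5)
Your proposal is correct and follows essentially the same route as the paper: Littlewood--Paley square function plus Minkowski to reduce to single dyadic blocks, the dilation/homogeneity argument to rescale $P_j$ to $P_0$ with the Jacobians producing exactly the exponent $\sigma=s$ from \eqref{eq2}, Lemma \ref{Loc} for the unit-scale block, and the $\ell^2$-sum giving $\|f\|_{\dot H^s_{x,y}}$. The only cosmetic difference is that the paper writes the per-block bound via the fattened projections $\widetilde{P}_j=\sum_{|k-j|\le 1}P_k$ before summing, which is the same bookkeeping you perform by keeping $\|P_jf\|_{L^2}$ on the right-hand side.
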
   
	\begin{proof} We begin the proof by substituting $e^{i t\Phi(D)} f$  into the estimate  \eqref{norm} as follows  $$
		\|e^{i t\Phi(D)} f\|_{L_x^r L_y^{\tilde{r}}} \lesssim \left\| \left(\sum_{j \in \mathbb{Z}} \left|P_j e^{i t\Phi(D)} f\right|^2\right)^{1 / 2}\right\|_{L_x^r L_y^{\tilde{r}}}. 
		$$ Next, using the Minkowski inequality, we can  conclude that 
		\begin{align}\label{eq12}\nonumber
			\left\|e^{i t\Phi(D)} f\right\|_{L_t^q L_x^r L_y^{\tilde{r}}}^2 &\lesssim\left\| \left( \sum_{j \in \mathbb{Z}} \left|P_j e^{i t\Phi(D)} f\right|^2\right)^{1 / 2}\right\|_{L_t^q L_x^r L_y^{\tilde{r}}}^2 \\
			& \lesssim \sum_{j \in \mathbb{Z}}\left\|e^{i t\Phi(D)} P_j f\right\|_{L_t^q L_x^r L_y^{\tilde{r}}}^2,
		\end{align}
        where in the last step, we used the fact that  $P_j$  commutes with the semigroup $e^{i t\Phi(D)}$.		Now, applying the Fourier transform and inversion, we can write 
		$$
		\begin{aligned}
			e^{i t\Phi(D)} P_j f(x, y) & = \int_{\mathbb{R}^{d}} e^{i\left(x,  y\right) \cdot(\xi, \eta)+it  \Phi(\xi, \eta)} \psi_j(\xi, \eta) \hat{f}\left( \xi,  \eta\right)   d \eta d \xi \\
			&=  \int_{\mathbb{R}^{d}} e^{i\left(x,  y\right) \cdot(\xi, \eta)+it  \Phi(\xi, \eta)} \psi(2^{-j}\xi, 2^{-j}\eta) \hat{f}\left( \xi,  \eta\right)   d \eta d \xi .
		\end{aligned}
		$$
   By the  change of variables $2^{-j} \xi \rightarrow \xi$ and $2^{-j} \eta \rightarrow \eta$ and using the   homogeneity  property of $\Phi$ as given in the Assumption (A), we get 
		$$
		\begin{aligned}
			e^{i t\Phi(D)} P_j f(x, y) & =  \int_{\mathbb{R}^{d}} e^{i\left(2^jx,  2^jy\right) \cdot(\xi, \eta)+it  \Phi(2^j\xi, 2^j\eta)}  \psi(\xi, \eta) \hat{f}\left( 2^j\xi,  2^j\eta\right)   2^{jN}d \eta d \xi \\
			& =  \int_{\mathbb{R}^{d}} e^{i\left(2^jx,  2^jy\right) \cdot(\xi, \eta)+it  \Phi(2^{j}(\xi, \eta))}  \psi(\xi, \eta) \hat{f}\left( 2^j\xi,  2^j\eta\right)   2^{jN}d \eta d \xi \\
			& =  \int_{\mathbb{R}^{d}} e^{i\left(2^jx,  2^jy\right) \cdot(\xi, \eta)+i2^{mj} t \Phi(\xi, \eta)}  \psi(\xi, \eta) \hat{f}\left( 2^j\xi,  2^j\eta\right)   2^{jN}d \eta d \xi \\
			&
			=e^{i 2^{jm} t\Phi(D)} P_0 f_j\left(2^j x, 2^j y\right)
		\end{aligned}
		$$
  where $f_j(x, y):=f\left(2^{-j} x, 2^{-j} y\right)$.  Now using the localized estimate (\ref{eq4}) given in Lemma \ref{Loc} and the above scaling estimates,  we can write 
		\begin{align}\label{eq141}\nonumber
			\left\|e^{i t\Phi(D)} P_j f\right\|_{L_t^q L_x^r L_y^{\tilde{r}}}^2 
			&   \lesssim 2^{j\left(-\frac{{m}}{q}-\frac{d-k}{r}-\frac{k}{\tilde{r}}\right)}\left\|e^{i t\Phi(D)} P_0 f_j\right\|_{L_t^q L_x^r L_y^{\tilde{r}}} \\\nonumber
			& \lesssim 2^{j\left(-\frac{{{m}}}{q}-\frac{d-k}{r}-\frac{k}{\tilde{r}}\right)}\left\|f_j\right\|_{L_{x, y}^2} \\\nonumber
			& \lesssim  2^{-\frac{jd}{2}}2^{js}2^{j\left(-s-\frac{{{m}}}{q}-\frac{d-k}{r}-\frac{k}{\tilde{r}}+\frac{d}{2}\right)}\left\|f_j\right\|_{L_{x, y}^2} \\
			& \leq  2^{j s}\|f\|_{L_{x, y}^2},
		\end{align}
		where we used the scaling condition \eqref{eq2} in the last inequality.  
        
        Now, from  the quasi-orthogonal relations $P_jP_k=0$ if $|j-k|>1$ and the identity relation $f= \displaystyle \sum_{j \in \mathbb{Z}} P_j f$,   if we write $P_j f=P_j \widetilde{P}_j f$   with $\widetilde{P}_j=\sum_{k:|j-k| \leq 1} P_k$, then   from    (\ref{eq12}) and \eqref{eq141}, we obtain  
		$$
		\begin{aligned}
			\left\|e^{i t\Phi(D)} f\right\|_{L_t^q L_x^r L_y^{\tilde{r}}}^2  
			& \lesssim \sum_{j \in \mathbb{Z}}\left\|e^{i t\Phi(D)} P_j f\right\|_{L_t^q L_x^r L_y^{\tilde{r}}}^2 \\
			&= \sum_{j \in \mathbb{Z}}\left\|e^{i t\Phi(D)} P_j \widetilde{P}_j f\right\|_{L_t^q L_x^r L_y^{\tilde{\tau}}}^2\\
			&   \lesssim \sum_{j \in \mathbb{Z}} 2^{2 j s}\left\|\widetilde{P}_j f\right\|_{L_{x, y}^2}^2 \lesssim\|f\|_{\dot{H}_{x, y}^s}^2,
		\end{aligned}
		$$
		completing the proof of the theorem. 
		\end{proof}

    
	 By choosing specific forms of the function $\Phi$, we arrive at particular Strichartz estimates for partially regular initial data. To start, we look at the Strichartz estimate for the free Schrödinger propagator, through which we recover \cite[Theorem 1.3]{KLS23}.
     
    \begin{cor} Let $d \geq 1,$ $1 \leq k \leq d$ and \begin{align*}
				\frac{2}{q} \leq(d-k)\left(\frac{1}{2}-\frac{1}{r}\right)+k\left(\frac{1}{2}-\frac{1}{\widetilde{r}}\right), \quad 2 \leq \widetilde{r} \leq r<\infty, \quad 2<q \leq \infty. 
			\end{align*} Then, we have the following Strichartz inequality 
      
			\begin{align*}
				\left\|e^{-i t \Delta} f\right\|_{L_t^q\left(\mathbb{R} ; L_x^r\left(\mathbb{R}^{d-k} ; L_y^{\tilde{r}}\left(\mathbb{R}^k\right)\right)\right)} \lesssim \|f\|_{\dot{H}_{x, y}^s},
			\end{align*}
			under the scaling condition
			\begin{align*}
				\frac{{{2}}}{q}=-s+(d-k)\left(\frac{1}{2}-\frac{1}{r}\right)+k\left(\frac{1}{2}-\frac{1}{\widetilde{r}}\right).
			\end{align*}
    \end{cor}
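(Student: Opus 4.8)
The plan is to derive the corollary as a direct specialization of Theorem~\ref{eq13} by choosing $\Phi(\xi,\eta)=|(\xi,\eta)|^2$, so the semigroup $e^{it\Phi(D)}$ becomes (up to a harmless sign convention) the free Schr\"odinger propagator $e^{-it\Delta}$. First I would verify that this $\Phi$ satisfies Assumptions~(A) with the parameters $m=2$ and $M=d$. Conditions (1)--(3) are immediate: $\Phi$ is smooth away from the origin, $\nabla\Phi(\xi,\eta)=2(\xi,\eta)\neq 0$ off the origin, it equals $1$ on the unit sphere so (2) holds with $\mu=1$, and it is homogeneous of degree $m=2$. For (4) and (5) one computes the Hessian $H\Phi\equiv 2I_d$, which has full rank $d$ everywhere; hence we may take $M=d$, and then $M\ge k\ge 1$ holds, $M-k=d-k$, and the rank condition in (5) (rank $\ge M-k$) holds trivially since the rank is always $d\ge d-k$. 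This is the only place any verification is needed, and it is routine.

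Once Assumptions~(A) are checked with $M=d$ and $m=2$, I would simply quote Theorem~\ref{eq13}. The hypothesis \eqref{eq0} there reads $\frac{2}{q}\le (M-k)(\frac12-\frac1r)+k(\frac12-\frac1{\widetilde r})$, which with $M=d$ becomes exactly the assumed inequality $\frac{2}{q}\le (d-k)(\frac12-\frac1r)+k(\frac12-\frac1{\widetilde r})$. The scaling condition \eqref{eq2}, namely $\frac{m}{q}=-s+(d-k)(\frac12-\frac1r)+k(\frac12-\frac1{\widetilde r})$, becomes with $m=2$ precisely the stated scaling relation $\frac{2}{q}=-s+(d-k)(\frac12-\frac1r)+k(\frac12-\frac1{\widetilde r})$. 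Therefore the conclusion $\|e^{it\Phi(D)}f\|_{L^q_t(\mathbb{R};L^r_x(\mathbb{R}^{d-k};L^{\widetilde r}_y(\mathbb{R}^k)))}\lesssim \|f\|_{\dot H^s_{x,y}}$ transfers verbatim, with $e^{it\Phi(D)}$ identified as $e^{-it\Delta}$.

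There is essentially no obstacle here: the corollary is a worked example illustrating Theorem~\ref{eq13}, and the only content is matching parameters ($m=2$, $M=d$) and observing that the Hessian of $|(\xi,\eta)|^2$ is a positive multiple of the identity, so that the generic rank hypothesis in Assumptions~(A) is met with the maximal value $M=d$. If one wishes to be scrupulous about the sign, I would note that replacing $t$ by $-t$ in Theorem~\ref{eq13} leaves all mixed-norm estimates over $\mathbb{R}$ in $t$ unchanged, so $e^{it|D|^2}$ and $e^{-it\Delta}$ satisfy the same bound; alternatively one takes $\Phi(\xi,\eta)=-|(\xi,\eta)|^2$, which equally satisfies Assumptions~(A). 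Either way the corollary follows, and as remarked in the text this recovers \cite[Theorem~1.3]{KLS23}.
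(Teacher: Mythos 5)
Your proposal is correct and follows the same route as the paper: the paper's proof likewise takes $\Phi(\xi)=|\xi|^2$, notes that Assumptions (A) hold with $m=2$ and $M=d$, and invokes Theorem \ref{eq13}. Your additional checks (the Hessian being $2I_d$, and the harmlessness of the sign convention for $e^{-it\Delta}$) are accurate elaborations of what the paper leaves implicit.
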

    \begin{proof} By choosing $\Phi(\xi)=|\xi|^2$, we can verify that it satisfies Assumption (A) for $m=2, M=d.$ Then, the result immediately follows from  Theorem \ref{eq13}.  
    \end{proof}

We show that our result yields the Strichartz estimate for the wave propagator, which was already established in \cite[Theorem 1.3]{KLS23}.

\begin{cor} Let $d \geq 2,$ $1 \leq k \leq d-1$ and \begin{align*}
				\frac{2}{q} \leq(d-1-k)\left(\frac{1}{2}-\frac{1}{r}\right)+k\left(\frac{1}{2}-\frac{1}{\widetilde{r}}\right), \quad 2 \leq \widetilde{r} \leq r<\infty, \quad 2<q \leq \infty.
			\end{align*}
Then, the following estimate 
			\begin{align*}
				\left\|e^{i t \sqrt{-\Delta}} f\right\|_{L_t^q\left(\mathbb{R} ; L_x^r\left(\mathbb{R}^{d-k} ; L_y^{\tilde{r}}\left(\mathbb{R}^k\right)\right)\right)} \lesssim \|f\|_{\dot{H}_{x, y}^s}
			\end{align*}
			holds  
			under  the scaling condition
			\begin{align*}
				\frac{{{1}}}{q}=(d-k)\left(\frac{1}{2}-\frac{1}{r}\right)+k\left(\frac{1}{2}-\frac{1}{\widetilde{r}}\right)-s.
			\end{align*}
\begin{proof} We consider  $\Phi(\xi)=|\xi|$, then $\Phi$ satisfies Assumption (A) with  $m=1, M=d-1$,  and  for any $1 \leq k \leq d-1$. Hence, Theorem \ref{eq13} gives the desired result.   \end{proof}
             \end{cor}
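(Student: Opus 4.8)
The plan is to deduce this Corollary as a direct specialization of the general Strichartz estimate in Theorem \ref{eq13}, applied to the homogeneous multiplier $\Phi(\xi,\eta) = |(\xi,\eta)| = \sqrt{|\xi|^2+|\eta|^2}$ on $\mathbb{R}^{d-k}\times\mathbb{R}^k$, which is precisely the symbol generating the half-wave group $e^{it\sqrt{-\Delta}}$. The entire argument reduces to checking that this $\Phi$ satisfies Assumptions (A) with homogeneity degree $m=1$ and Hessian-rank parameter $M = d-1$. Once these two values are identified, the admissibility inequality $\tfrac{2}{q} \leq (M-k)(\tfrac12-\tfrac1r)+k(\tfrac12-\tfrac1{\widetilde r})$ and the scaling relation $\tfrac{m}{q} = -s + (d-k)(\tfrac12-\tfrac1r)+k(\tfrac12-\tfrac1{\widetilde r})$ of Theorem \ref{eq13} become verbatim the hypotheses stated in the Corollary, so no further analytic input is needed.

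The elementary conditions (1)--(3) are immediate. The function $\Phi$ is smooth and real-valued on $\mathbb{R}^d\setminus\{0\}$; its gradient $\nabla\Phi(\xi,\eta) = (\xi,\eta)/|(\xi,\eta)|$ is a unit vector and hence never vanishes, giving (1). On the sphere $|(\xi,\eta)|=1$ we have $\Phi\equiv 1$, so (2) holds with $\mu=1$. Finally $\Phi(\lambda(\xi,\eta)) = \lambda|(\xi,\eta)| = \lambda\Phi(\xi,\eta)$, which is (3) with $m=1$.

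The only point requiring genuine care is the Hessian-rank count in (4)--(5). A direct computation gives
$$H\Phi(\xi,\eta) = \frac{1}{|(\xi,\eta)|}\left(I_d - \frac{(\xi,\eta)(\xi,\eta)^{T}}{|(\xi,\eta)|^2}\right),$$
that is, $|(\xi,\eta)|^{-1}$ times the orthogonal projection onto the hyperplane $(\xi,\eta)^{\perp}$; its rank is therefore exactly $d-1$ at every nonzero point, so $M = d-1$ and condition (4), namely $M \geq k$, amounts to $d-1 \geq k$, which is precisely the standing hypothesis $1\leq k \leq d-1$ (and $d\geq 2$ ensures $M\geq 1$). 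For (5) I freeze $\eta$ and record the partial Hessian in the $\xi$-variable, $\partial_{\xi_i}\partial_{\xi_j}\Phi = \Phi^{-1}(\delta_{ij} - \xi_i\xi_j/\Phi^2)$; its eigenvalues are $1$ along the $(d-k-1)$-dimensional subspace $\xi^{\perp}$ and $|\eta|^2/\Phi^2$ along $\xi$. Hence this $(d-k)\times(d-k)$ matrix has rank $d-k$ when $\eta\neq 0$ and rank $d-k-1$ in the degenerate case $\eta=0$; in both cases the rank is at least $d-1-k = M-k$, so (5) holds. I expect this tracking of the single degenerate direction along $\xi$ at $\eta=0$ to be the only subtle step, since it is exactly what guarantees the decay exponent $\frac{M-k}{2}$ used in Case (c) of Lemma \ref{eq6}.

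With Assumptions (A) verified and $(m,M)=(1,d-1)$ established, I substitute these values into Theorem \ref{eq13}. The resulting admissibility condition and scaling identity coincide exactly with those in the statement, which yields the claimed estimate
$$\left\|e^{it\sqrt{-\Delta}}f\right\|_{L^q_t\left(\mathbb{R};L^r_x\left(\mathbb{R}^{d-k};L^{\widetilde r}_y(\mathbb{R}^k)\right)\right)} \lesssim \|f\|_{\dot H^s_{x,y}}$$
and completes the proof.
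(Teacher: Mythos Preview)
Your proof is correct and follows the same approach as the paper: both deduce the corollary directly from Theorem \ref{eq13} by taking $\Phi(\xi,\eta)=|(\xi,\eta)|$ and identifying $m=1$, $M=d-1$. The paper's proof simply asserts that Assumptions (A) hold with these values, whereas you supply the explicit Hessian computation verifying (4) and (5); this added detail is accurate and helpful but does not constitute a different route.
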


Our next result concerns the Strichartz estimates for the biharmonic operator with partially regular initial data. This particular case appears to be new in the literature.
            \begin{cor} \label{biha} Let $d \geq 1,$ $1 \leq k \leq d$ and \begin{align*}
				\frac{2}{q} \leq(d-k)\left(\frac{1}{2}-\frac{1}{r}\right)+k\left(\frac{1}{2}-\frac{1}{\widetilde{r}}\right), \quad 2 \leq \widetilde{r} \leq r<\infty, \quad 2<q \leq \infty. 
			\end{align*}
            Then we have 
			\begin{align*}
				\left\|e^{-i t \Delta^2} f\right\|_{L_t^q\left(\mathbb{R} ; L_x^r\left(\mathbb{R}^{d-k} ; L_y^{\tilde{r}}\left(\mathbb{R}^k\right)\right)\right)} \lesssim \|f\|_{\dot{H}_{x, y}^s},
			\end{align*}
			under  the scaling condition
			\begin{align*}
				\frac{{{4}}}{q}=(d-k)\left(\frac{1}{2}-\frac{1}{r}\right)+k\left(\frac{1}{2}-\frac{1}{\widetilde{r}}\right)-s.
			\end{align*}
	           \end{cor}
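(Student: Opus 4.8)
The plan is to deduce Corollary~\ref{biha} directly from Theorem~\ref{eq13} (equivalently Theorem~\ref{them1.1}) by exhibiting the symbol $\Phi(\xi)=|\xi|^4$ as a member of the class covered by Assumptions~\textbf{(A)}, with the correct values of the parameters $m$ and $M$. Since $-i t\Delta^2$ is the Fourier multiplier with symbol $e^{it|\xi|^4}$ (up to the harmless sign convention already used for $e^{-it\Delta}$), the operator $e^{-it\Delta^2}$ coincides with $e^{it\Phi(D)}$ for this choice of $\Phi$, so once the hypotheses are verified the estimate and the scaling relation are immediate consequences of Theorem~\ref{eq13}.

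The verification of Assumptions~\textbf{(A)} for $\Phi(\xi)=|\xi|^4$ on $\mathbb{R}^d\setminus\{0\}$ proceeds pointwise. Smoothness away from the origin is clear. For (1), $\nabla\Phi(\xi)=4|\xi|^2\xi\neq0$ for $\xi\neq0$. For (2), $|\Phi(\xi)|=1$ whenever $|\xi|=1$, so $\mu=1$ works. For (3), $\Phi(\lambda\xi)=\lambda^4\Phi(\xi)$, giving homogeneity degree $m=4$. For (4) and (5) one computes the Hessian: writing $\Phi(\xi)=(|\xi|^2)^2$, a direct differentiation gives
\begin{align*}
H\Phi(\xi)=8\,\xi\otimes\xi+4|\xi|^2 I + 8\,\xi\otimes\xi = 8|\xi|^2\Bigl(I+2\,\widehat\xi\otimes\widehat\xi\Bigr),
\end{align*}
wait—more carefully, $\partial_{\xi_i}\Phi=4|\xi|^2\xi_i$, so $\partial_{\xi_j}\partial_{\xi_i}\Phi=4|\xi|^2\delta_{ij}+8\xi_i\xi_j$, i.e. $H\Phi(\xi)=4|\xi|^2 I+8\,\xi\xi^{\mathsf T}$. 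This is positive definite for every $\xi\neq0$ (it is $4|\xi|^2 I$ plus a positive semidefinite rank-one matrix), hence has full rank $d$. Thus both (4) and (5) hold with $M=d$ (and in fact with rank $d$, so the requirements ``$\geq M\geq k$'' and ``$\geq M-k$'' are satisfied for every $1\le k\le d$).

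With $m=4$ and $M=d$, the hypotheses of Theorem~\ref{eq13} become exactly
\begin{align*}
\frac{2}{q}\le (d-k)\Bigl(\frac12-\frac1r\Bigr)+k\Bigl(\frac12-\frac1{\widetilde r}\Bigr),\qquad
\frac{4}{q}=-s+(d-k)\Bigl(\frac12-\frac1r\Bigr)+k\Bigl(\frac12-\frac1{\widetilde r}\Bigr),
\end{align*}
which are precisely the admissibility and scaling conditions stated in Corollary~\ref{biha} (note $M-k=d-k$ here, so the Hessian-rank term in \eqref{eq0} collapses to the spatial-dimension term). Therefore Theorem~\ref{eq13} applied to $\Phi(\xi)=|\xi|^4$ yields $\|e^{it\Phi(D)}f\|_{L^q_t L^r_x L^{\widetilde r}_y}\lesssim\|f\|_{\dot H^s_{x,y}}$, and identifying $e^{it\Phi(D)}=e^{-it\Delta^2}$ completes the proof. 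I do not anticipate a genuine obstacle here: the only mildly delicate point is confirming that $M=d$ is legitimate, i.e. that the Hessian of $|\xi|^4$ never degenerates, which the explicit formula $H\Phi(\xi)=4|\xi|^2 I+8\,\xi\xi^{\mathsf T}$ settles at once; everything else is a substitution into Theorem~\ref{eq13}.
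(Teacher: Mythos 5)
Your proposal is correct and follows essentially the same route as the paper: both verify Assumptions (A) for $\Phi(\xi)=|\xi|^4$ with $m=4$, $M=d$, and then invoke Theorem~\ref{eq13}. The one point worth tightening is condition (5), which (as the paper's own proof makes explicit by computing $H\Phi_\eta(\xi)$) refers to the $(d-k)\times(d-k)$ Hessian in the $\xi$-variables for fixed $|\eta|\le 2$ rather than to the full Hessian; this nonetheless follows at once from your formula, since that partial Hessian is a principal submatrix of the positive definite matrix $4|\xi|^2 I+8\,\xi\xi^{\mathsf T}$ and hence has full rank $d-k$.
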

               \begin{proof} To derive this result, we choose  $\Phi(\cdot)=|\cdot|^4.$	 We write   $\Phi(\xi, \eta)=(|\xi|^2+|\eta|^2)^{2}$, where $\xi=(\xi_1, \cdots, \xi_{d-k})\in \mathbb{R}^{d-k}$ and $\eta=(\eta_1, \cdots, \eta_{k})\in \mathbb{R}^{k}.$  In this case  the Hessian matrix $H \Phi$ is given by 
               $$
					H \Phi  (\xi, \eta ) =  
					\begin{pmatrix}
						8(	|\xi|^2 + |\eta|^2) +16\xi_1^2  &   \cdots  &   16\xi_1 \eta_{k} \\
						\vdots     &   \ddots    &   \vdots \\
						16\xi_{d-k} \eta_{1}    &   \cdots &   	8(	|\xi|^2 + |\eta|^2)  +16\eta_{k}^2
					\end{pmatrix} .
					$$
                    Observe that the rank    of $H \Phi  (\xi, \eta ) =d$ for all $(\xi, \eta) \neq 0.$
                    Again, if we consider $\Phi_\eta  (\xi )=(|\xi|^2+|\eta|^2)^{2}$ for fixed     	 $|\eta| \leq 2$. Then Hessian matrix $H \Phi_\eta $ is given by   	$$H \Phi_\eta  (\xi ) =  
					\begin{pmatrix}
						4(	|\xi|^2 + |\eta|^2) +8\xi_1^2  &   \cdots  &   8\xi_1 \xi_{d-k} \\
						\vdots     &   \ddots    &   \vdots \\
						8\xi_{d-k} \xi_{1}    &   \cdots &   	4(	|\xi|^2 + |\eta|^2)  +8\xi_{d-k}^2
					\end{pmatrix} .
					$$
					in this case rank of $H \Phi_\eta  (\xi ) =d-k$ for fixed     	 $|\eta| \leq 2$.
               Thus, Assumptions (A) is verified by $\Phi$ by taking      $m=4, M=d$,  and   for any $1 \leq k \leq d.$  Then, this corollary follows as a direct application of Theorem \ref{eq13}.
               \end{proof}

               The next result is about the Strichartz estimates for the higher order Schr\"odinger operators with partial regular initial data. 

        \begin{cor} \label{hos} Let $d \geq 1,$ $\alpha>2,$ $1 \leq k \leq d$ and 
        \begin{align}\label{Condition1}
				\frac{2}{q} \leq(d-k)\left(\frac{1}{2}-\frac{1}{r}\right)+k\left(\frac{1}{2}-\frac{1}{\widetilde{r}}\right), \quad 2 \leq \widetilde{r} \leq r<\infty, \quad 2<q \leq \infty,.
			\end{align}
            Then we have
			\begin{align}\label{eq11111}
				\left\|e^{i t (-\Delta)^{\frac{\alpha}{2}}} f\right\|_{L_t^q\left(\mathbb{R} ; L_x^r\left(\mathbb{R}^{d-k} ; L_y^{\tilde{r}}\left(\mathbb{R}^k\right)\right)\right)} \lesssim \|f\|_{\dot{H}_{x, y}^s}
			\end{align}
			under  the scaling condition
			\begin{align}\label{Condition2}
				\frac{{{\alpha}}}{q}=(d-k)\left(\frac{1}{2}-\frac{1}{r}\right)+k\left(\frac{1}{2}-\frac{1}{\widetilde{r}}\right)-s.
			\end{align}
			Moreover,  the  following  inhomogeneous estimate holds
			\begin{align}\label{inhom}
				\left\|\int_0^t e^{i(t-\tau)  (-\Delta)^{\frac{\alpha}{2}}} F_p(u) d \tau\right\|_{L_t^q\left(\mathcal{I} ; L_x^r\left(\mathbb{R}^{d-k} ; L_y^{\tilde{r}}\left(\mathbb{R}^k\right)\right)\right)}  \leq C\left\|F_p\right\|_{L_t^{q_1'}\left(\mathcal{I} ; L_x^{r_1'}\left(\mathbb{R}^{d-k} ; L_y^{\tilde{r_1}'}\left(\mathbb{R}^k\right)\right)\right)}
			\end{align}
			for any pairs $(q, r, \tilde{r}),(q_1, r_1, \tilde{r_1}) $ that satisfies (\ref{Condition1})  and (\ref{Condition2}), where $a'$ denotes the conjugate of $a $, i.e., $\frac{1}{a}+\frac{1}{a'}=1$.
			
      \end{cor}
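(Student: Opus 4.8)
\textbf{Proof proposal for Corollary \ref{hos}.}
The plan is to deduce both the homogeneous estimate \eqref{eq11111} and the inhomogeneous estimate \eqref{inhom} from Theorem \ref{eq13} applied with the specific choice $\Phi(\xi,\eta)=|(\xi,\eta)|^{\alpha}$, $\alpha>2$. First I would verify Assumptions (A) for this $\Phi$. Conditions (2) and (3) are immediate: $\Phi$ is positive and homogeneous of degree $m=\alpha$, and $|\nabla\Phi|\ne 0$ away from the origin since $\nabla\Phi(\xi,\eta)=\alpha|(\xi,\eta)|^{\alpha-2}(\xi,\eta)$. For conditions (4) and (5) one computes the Hessian: writing $\rho=|(\xi,\eta)|$ and $z=(\xi,\eta)\in\mathbb R^d$, one has
$$
H\Phi(z)=\alpha\rho^{\alpha-2}\,I_d+\alpha(\alpha-2)\rho^{\alpha-4}\,z z^{T},
$$
which is positive definite for all $z\ne 0$ (it is $\alpha\rho^{\alpha-2}I_d$ plus a positive semidefinite rank-one term), so $\mathrm{rank}\,H\Phi=d$ everywhere, giving $M=d$. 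Restricting to $\Phi_\eta(\xi)=(|\xi|^2+|\eta|^2)^{\alpha/2}$ with $|\eta|\le 2$ fixed, the Hessian in the $\xi$-variables is again a positive multiple of $I_{d-k}$ plus a rank-one positive semidefinite term (for $\xi\ne 0$; and when $\xi=0$ but $\eta\ne0$ the matrix is a positive multiple of $I_{d-k}$), so its rank is $d-k=M-k$, confirming condition (5). Thus Theorem \ref{eq13} applies with $m=\alpha$ and $M=d$, and since $M-k=d-k$ the admissibility condition \eqref{eq0} becomes exactly \eqref{Condition1} and the scaling condition \eqref{eq2} becomes exactly \eqref{Condition2}, yielding \eqref{eq11111}.

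For the inhomogeneous estimate \eqref{inhom}, I would run the standard Christ--Kiselev / $TT^*$ argument. Write $\mathcal A F(t)=\int_0^t e^{i(t-\tau)(-\Delta)^{\alpha/2}}F(\tau)\,d\tau$. By the homogeneous estimate \eqref{eq11111} and its dual (which are equivalent statements about the operator $f\mapsto e^{it(-\Delta)^{\alpha/2}}f$ and its adjoint), the untruncated operator $\widetilde{\mathcal A}F(t)=\int_{\mathbb R}e^{i(t-\tau)(-\Delta)^{\alpha/2}}F(\tau)\,d\tau$ maps $L_t^{q_1'}L_x^{r_1'}L_y^{\tilde r_1'}\to L_t^{q}L_x^{r}L_y^{\tilde r}$ for any two admissible triples $(q,r,\tilde r)$ and $(q_1,r_1,\tilde r_1)$ satisfying \eqref{Condition1}--\eqref{Condition2}, because one factors $\widetilde{\mathcal A}=(e^{it(-\Delta)^{\alpha/2}})\circ(e^{-i\tau(-\Delta)^{\alpha/2}})^{*}$ and applies \eqref{eq11111} to each factor. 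Since $q,q_1>2$ we have $q_1'<2<q$, so the Christ--Kiselev lemma upgrades the bound on $\widetilde{\mathcal A}$ to the same bound on the truncated operator $\mathcal A$, giving \eqref{inhom}. Finally, since $F_p(u)$ is just a fixed function here, applying this with $F=F_p(u)$ yields the stated inequality.

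The main obstacle — and really the only non-routine point — is the Hessian rank computation for $\Phi(\xi,\eta)=|(\xi,\eta)|^{\alpha}$, specifically checking condition (5): one must confirm that when the full Hessian is restricted to the $\xi$-directions at a fixed $\eta$ with $|\eta|\le 2$, the rank does not drop below $d-k$, including at the potentially degenerate point $\xi=0$. As noted above this is handled by observing that the restricted Hessian always contains the strictly positive multiple $\alpha(|\xi|^2+|\eta|^2)^{(\alpha-2)/2}I_{d-k}$ as a summand, with the remaining term positive semidefinite, so the rank is exactly $d-k$ throughout the region $|\eta|\le 2$ (note $\eta\ne 0$ is not needed, but $(\xi,\eta)\ne 0$ is guaranteed since $\Phi$ is only assumed smooth off the origin and the Littlewood--Paley localization keeps us on $1/2\le|(\xi,\eta)|\le 2$). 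Everything else is a direct specialization of Theorem \ref{eq13} together with the textbook inhomogeneous-Strichartz machinery.
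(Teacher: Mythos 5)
Your proposal is correct and follows essentially the same route as the paper: verify Assumptions (A) for $\Phi(\xi,\eta)=|(\xi,\eta)|^{\alpha}$ via the explicit Hessian computation (the paper does this for $\alpha=4$ in Corollary \ref{biha} and cites it for general $\alpha>2$), apply Theorem \ref{eq13} with $m=\alpha$, $M=d$, and obtain the inhomogeneous bound by the standard $TT^*$/duality machinery. Your explicit invocation of the Christ--Kiselev lemma to pass from the untruncated to the retarded operator is in fact a more careful rendering of the step the paper dispatches with the phrase ``standard $TT^*$ argument and the dual norm principle.''
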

\begin{proof} The proof of this result is similar to the proof of Corollary \ref{biha} by choosing   $\Phi(\xi)=|\xi|^\alpha, \alpha>2$ with $m=\alpha, M=d$,  and   for any $1 \leq k \leq d.$ The inhomogeneous estimate follows from standard  $TT^*$ argument and the dual norm principle.
\end{proof}
    Finally,   choose $s=0$ and $k=2$ in Corollary \ref{hos} we get the following result, which coincides with the classical Strichartz estimates given in \cite{CHL11} for $r=\tilde{r}$.

	\begin{cor} Let $d \geq 2, \alpha >2$ and  
    \begin{align}\label{a}
			\frac{2}{q} \leq(d-2)\left(\frac{1}{2}-\frac{1}{r}\right)+2\left(\frac{1}{2}-\frac{1}{\widetilde{r}}\right), \quad 2 \leq \widetilde{r} \leq r<\infty, \quad 2<q \leq \infty.
		\end{align}
        Then we have 
		\begin{align*}
			\left\|e^{i t (-\Delta)^{\frac{\alpha}{2}}}  f\right\|_{L_t^q\left(\mathbb{R} ; L_x^r\left(\mathbb{R}^{d-2} ; L_y^{\tilde{r}}\left(\mathbb{R}^2\right)\right)\right)} \lesssim\|f\|_{L^2(\mathbb{R}^{d})}
		\end{align*}
		under  the scaling condition
		\begin{align*}
			\frac{{{\alpha}}}{q}=(d-2)\left(\frac{1}{2}-\frac{1}{r}\right)+2\left(\frac{1}{2}-\frac{1}{\widetilde{r}}\right).
		\end{align*}
		\end{cor}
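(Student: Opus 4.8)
The plan is to deduce this directly from Corollary \ref{hos} by specializing the parameters. First I would set $k = 2$, which is admissible since the hypothesis $d \geq 2$ guarantees $1 \leq k \leq d$, and then set $s = 0$. With these choices, condition \eqref{Condition1} of Corollary \ref{hos} becomes exactly \eqref{a}, and the scaling condition \eqref{Condition2} becomes exactly the scaling identity $\frac{\alpha}{q} = (d-2)\left(\frac{1}{2} - \frac{1}{r}\right) + 2\left(\frac{1}{2} - \frac{1}{\widetilde{r}}\right)$ required in the statement, since the term $-s$ vanishes.

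It then remains only to identify the function spaces. On the right-hand side, the homogeneous Sobolev space $\dot{H}^0_{x,y}(\mathbb{R}^d)$ coincides isometrically with $L^2(\mathbb{R}^d)$, by the Plancherel theorem (the Fourier multiplier $|\cdot|^0 \equiv 1$), so the bound $\|f\|_{\dot{H}^0_{x,y}}$ furnished by Corollary \ref{hos} is precisely $\|f\|_{L^2(\mathbb{R}^d)}$. The left-hand side is already stated in the mixed-norm form $L^q_t L^r_x L^{\widetilde{r}}_y$ with the spatial splitting $\mathbb{R}^d = \mathbb{R}^{d-2} \times \mathbb{R}^2$, matching \eqref{eq11111} verbatim once $k=2$.

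There is no substantive obstacle here; the only point requiring a word of care is verifying that all the constraints of Corollary \ref{hos} are compatible with $k=2$ and $s=0$ simultaneously — namely that $2 < q \leq \infty$, $2 \leq \widetilde{r} \leq r < \infty$, $\alpha > 2$, and $d \geq k = 2$ — all of which are included in the hypotheses of the present corollary. Hence the estimate \eqref{eq11111}, under \eqref{Condition1}–\eqref{Condition2} with $(s,k) = (0,2)$, yields exactly the claimed inequality, which for $r = \widetilde{r}$ recovers the classical Strichartz estimate \eqref{strhighor} of \cite{CHL11}. $\qed$
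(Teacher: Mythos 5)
Your proposal is correct and follows exactly the paper's own route: the paper likewise obtains this corollary by taking $s=0$ and $k=2$ in Corollary \ref{hos} and identifying $\dot{H}^0_{x,y}$ with $L^2(\mathbb{R}^d)$ via Plancherel. Your additional check that the parameter constraints of Corollary \ref{hos} are compatible with $(s,k)=(0,2)$ under the hypothesis $d\geq 2$ is a welcome bit of extra care, but the argument is the same.
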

	
	\section{Application: Local well-posedness of higher order nonlinear Schr\"odinger equations}\label{sec4}
	In this section, we prove the local well-posedness theorem for the higher order nonlinear Schr\"odinger equation (\ref{higher order}) with partially regular initial data $f \in L_x^2\left(\mathbb{R}^{d-2} ; H_y^s\left(\mathbb{R}^2\right)\right)$. We begin by describing our approach to prove the local wellposedness  of the nonlinear problem \eqref{eq10} for $\Phi(\xi)=|\xi|^m$, that is, equation \eqref{higher order}. \\
	 
	\textbf{Philosophy of our approach:} First, we note that the solution of the linear equation (\ref{higher order}) with $F=0$ is given by $e^{i t(-\Delta)^{\frac{m}{2}} } f$. By Duhamel's principle, the solution map of the nonlinear equation  (\ref{higher order})   can be written as 
	$$
	u(t, x)=e^{i t (-\Delta)^{\frac{m}{2}}} f(x)-i \int_0^t e^{i(t-\tau)(-\Delta)^{\frac{m}{2}}} F_p(u)(t, x)\; d \tau. 
	$$ 
	For $0<s \leq 1$ and suitable values of $T, A>0$, we introduce the evolution space 
	\begin{equation} \label{evospace}
	    X(T)=\left\{u \in C\left([0, T] ; L_x^2 H_y^s\right) \cap L_t^q\left([0, T] ; L_x^r W_y^{s, \widetilde{r}}\right):  \|u\|_{L_t^q\left([0, T] ; L_x^r W_y^{s, \tilde{r}}\right)} \leq A\right\}
	\end{equation} with the distance
	
	$$
	d(u, v)=\|u-v\|_{L_t^q\left([0,T] ; L_x^r L_y^{\tilde{r}}\right)}. 
	$$ 	
	We define the  operator  $N: X(T)\to X(T)$ defined  by $$Nu(t, x):=e^{i t (-\Delta)^{\frac{m}{2}}} f(x)-i \int_0^t e^{i(t-\tau) (-\Delta)^{\frac{m}{2}}} F_p(u)(\tau, x)\; d \tau.$$
	We aim to prove the local existence and uniqueness of small data Sobolev solutions of low regularity to the  equation (\ref{higher order}) with
	the help of the Banach's  fixed point theorem argument.  We find a  unique fixed point (say) $u^*$ of the operator $N$, that means, $u^*=N u^* \in X(T)$ for all positive $T$. More precisely, to find such a unique fixed point,   we will establish two crucial inequalities of the form
	\begin{align}\label{Banach1}
		\|N u\|_{X(T)} & \lesssim  \left\|f\right\|_{L_x^2\left(\mathbb{R}^{d-2} ; H_y^s\left(\mathbb{R}^2\right)\right)}+\|u\|_{X(T)}^p  \end{align}
	and \begin{align}\label{Banach2}
		d(u, v)=\|Nu-Nv\|_{L_t^q\left(I ; L_x^r L_y^{\tilde{r}}\right)}& \lesssim\|u-v\|_{X(T)}\left[  \|u\|_{X(T)}^{p-1}+\|v\|_{X(T)}^{p-1}\right],
	\end{align}
	for any $u, v \in X(T)$.

	The following result is  related to the local in-time wellposedness  for the higher order nonlinear Schrödinger equation (\ref{higher order}) with partially regular initial data $f \in L_x^2\left(\mathbb{R}^{d-2} ; H_y^s\left(\mathbb{R}^2\right)\right)$. 
	\begin{theorem}
		For   $0<s \leq 1, m>2$ and $d \geq 3$,  let   $1<p<1+\frac{2m}{ d-2s}$ and $f \in$ $L_x^2\left(\mathbb{R}^{d-2} ; H_y^s\left(\mathbb{R}^2\right)\right)$.  Further, assume that  
		$(q, r, \tilde{r})$ with $2 \leq \widetilde{r} \leq r<\infty$ and $ 2<q \leq \infty $ satisfies $$			\frac{{{m}}}{q}=(d-2)\left(\frac{1}{2}-\frac{1}{r}\right)+2\left(\frac{1}{2}-\frac{1}{\widetilde{r}}\right).$$ Then there exists  $T>0$ 
		$$
		u \in C_t\left([0, T] ; L_x^2\left(\mathbb{R}^{d-2}; H_y^s(\mathbb{R}^{2})\right)\right) \cap   L_t^q\left([0, T] ; L_x^r \left(\mathbb{R}^{d-2}; W_y^{s, \widetilde{r}}(\mathbb{R}^{2})\right)\right)
		$$
		that satisfies  the higher order nonlinear schr\"odinger equation  (\ref{higher order}).
	\end{theorem}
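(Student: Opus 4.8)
The plan is to run a standard Banach fixed-point argument in the evolution space $X(T)$ defined in \eqref{evospace}, using the homogeneous Strichartz estimate \eqref{eq11111} and the inhomogeneous estimate \eqref{inhom} from Corollary \ref{hos} (applied with $\alpha=m$, $k=2$, and regularity $s$ carried in the $y$-variable), together with the growth condition \eqref{nonlin} on $F_p$ and a fractional chain/product rule in the $y$-variable. First I would fix an admissible triple $(q,r,\widetilde r)$ as in the hypothesis and choose an auxiliary ``dual'' admissible triple $(q_1,r_1,\widetilde r_1)$ adapted to the nonlinearity: since $|F_p(u)| \lesssim |u|^p$, one wants $L^{q_1'}_t L^{r_1'}_x W^{s,\widetilde r_1'}_y$ of $F_p(u)$ to be controlled by a power of $\|u\|_{X(T)}$ after paying a positive power of $T$ via Hölder in time — this is exactly where the subcriticality $p < 1+\frac{2m}{d-2s}$ (equivalently $s < s_c$) is used, as it guarantees the relevant time exponent is strictly positive so that $T^\theta \to 0$ as $T\to 0$.

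The key steps, in order: (1) record that $Nu(t) = e^{it(-\Delta)^{m/2}}f - i\int_0^t e^{i(t-\tau)(-\Delta)^{m/2}}F_p(u)(\tau)\,d\tau$ and apply \eqref{eq11111} to the linear term to get $\|e^{it(-\Delta)^{m/2}}f\|_{L^q_tL^r_xW^{s,\widetilde r}_y} \lesssim \|f\|_{L^2_x H^s_y}$ (the $s$-derivative in $y$ commutes with the propagator, so this follows from the $s=0$ estimate applied to $(1-\Delta_y)^{s/2}f$); (2) apply \eqref{inhom} together with the fractional Leibniz rule in $y$ and the pointwise bound $|u||F_p'(u)| \sim |F_p(u)| \lesssim |u|^p$ to estimate $\|F_p(u)\|_{L^{q_1'}_tL^{r_1'}_xW^{s,\widetilde r_1'}_y} \lesssim \||u|^{p-1}\|_{?}\|u\|_{?}$, then use Hölder in $(t,x,y)$ to distribute the $p$ factors of $u$ across Lebesgue/Sobolev exponents consistent with an admissible triple, absorbing the leftover into $T^\theta$; (3) combine to obtain \eqref{Banach1}, namely $\|Nu\|_{X(T)} \lesssim \|f\|_{L^2_xH^s_y} + T^\theta\|u\|_{X(T)}^p$; (4) an entirely parallel computation using the difference bound $|F_p(u)-F_p(v)| \lesssim (|u|^{p-1}+|v|^{p-1})|u-v|$ (valid since $F_p\in C^1$ with the stated derivative growth) gives the contraction estimate \eqref{Banach2}, $d(Nu,Nv) \lesssim T^\theta(\|u\|_{X(T)}^{p-1}+\|v\|_{X(T)}^{p-1})\,d(u,v)$; (5) choose $A \sim \|f\|_{L^2_xH^s_y}$ and then $T$ small enough (depending on $\|f\|$) so that $N$ maps the ball of radius $A$ in $X(T)$ into itself and is a strict contraction for the metric $d$; apply the contraction mapping principle to get the unique fixed point $u^*$, and note $u^* \in C_t([0,T];L^2_xH^s_y)$ by the standard continuity-in-time argument for Duhamel terms.

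The main obstacle is step (2): carrying the $W^{s,\widetilde r}_y$ regularity through the nonlinearity. Since $s$ need not be an integer, one cannot simply use the ordinary product rule; one needs a fractional chain rule / Kato--Ponce-type Leibniz estimate \emph{in the $y$-variable only} (with the $x$-variable as a parameter), of the form $\|(1-\Delta_y)^{s/2}F_p(u)\|_{L^{r_1'}_xL^{\widetilde r_1'}_y} \lesssim \|u\|_{L^{a}_xL^{b}_y}^{p-1}\|(1-\Delta_y)^{s/2}u\|_{L^{c}_xL^{e}_y}$ with exponents summing correctly, using $0<s\le 1$ and the $C^1$ growth hypothesis on $F_p$. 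This, combined with the bookkeeping needed to verify that the resulting exponents $(a,b),(c,e)$ and the dual triple $(q_1,r_1,\widetilde r_1)$ can all be chosen admissible while leaving a genuinely positive power of $T$ — which is precisely the content of the subcritical constraint $p<1+\frac{2m}{d-2s}$ — is the technical heart of the argument; the rest is routine fixed-point machinery. I would state the needed fractional $y$-Leibniz rule as a lemma (citing \cite{CW91, KLS23} for the Euclidean mixed-norm version) and then verify the exponent arithmetic explicitly for the triple $(q,r,\widetilde r)$ in the theorem's hypothesis.
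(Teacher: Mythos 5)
Your proposal is correct and follows essentially the same route as the paper: Duhamel plus the homogeneous/inhomogeneous Strichartz estimates of Corollary \ref{hos}, a fractional chain rule in the $y$-variable combined with H\"older in time to extract a positive power $T^{\beta_1(d,p,s)}$ from the subcriticality $p<1+\frac{2m}{d-2s}$, and then the contraction mapping principle in $X(T)$. The only content you defer --- the explicit choice of the auxiliary triple $(q_1,r_1,\widetilde r_1)$ via a parameter $\varepsilon$ and the two-dimensional Sobolev embedding $W^{s,\widetilde r_1}\hookrightarrow L^{(p-1)(p+1)/(2\varepsilon)}$ --- is exactly the exponent bookkeeping the paper carries out, and you correctly identify it as the technical heart.
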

	\begin{proof}

		In order to prove the existence of a local solution to the nonlinear equation \eqref{higher order}, it is enough to show that the map $$N: X(T)\to Nu(t, x):=e^{i t (-\Delta)^{\frac{m}{2}}} f(x)-i \int_0^t e^{i(t-\tau) (-\Delta)^{\frac{m}{2}}} F_p(u)(\tau, x)\; d \tau$$
		is defines a contraction map on $X(T)$ defined in \eqref{evospace}. To prove $N$ is a contraction map on $X(T),$ by denoting $\mathcal{I}= [0,T]$,  our first step is to show  that
		\begin{align*}
			\left\|\left\langle\nabla_y\right\rangle^s F_p(u)\right\|_{L_t^{q_1^{\prime}}\left(\mathcal{I} ; L_x^{r_1^{\prime}} L_y^{\tilde{r}_1^{\prime}}\right)} \leq C T^{\beta(N, p, s)}\|u\|_{L_t^{q_1}\left(\mathcal{I} ; L_x^{r_1} W_y^{s, \tilde{r}_1}\right)}^p
		\end{align*}
		for $0<s \leq 1$ and $1<p<1+\frac{2m}{d-2s}$ and for  some  $\beta(N, p, s)>0$.

		For $d \geq 3$, $0<s \leq 1$ and for a fixed exponent $p$ with  $1<p<1+\frac{2m}{d-2s}$, we   choose   $\varepsilon>0$ and  a pair $\left(q_1, r_1, \widetilde{r}_1\right)$ such that it satisfies the following conditions  
		$$
		\frac{(1-s)(p-1)}{2}<\varepsilon<\min \left\{\frac{d-2}{m^2}\left(1+\frac{2m}{d-2}-p\right), \frac{p-1}{2}\right\}
		$$
		and 
		\begin{equation} \label{mxm}
		    \frac{1}{r_1}=\frac{1}{p+1}, \quad \frac{1}{\widetilde{r}_1}=\frac{1}{2}-\frac{\epsilon}{p+1}, \quad \frac{1}{q_1}=\frac{d-2}{2m}-\frac{d-2}{m(p+1)}+\frac{m\varepsilon}{2(p+1)}.
		\end{equation}
		Note that above   $\left(q_1, r_1, \widetilde{r}_1\right)$ also satisfies the admissible condition 	$\frac{{{m}}}{q_1}+\frac{d-2}{r_1}+\frac{2}{\widetilde{r}_1}=\frac{d}{2}.$
        
        Using \eqref{nonlin}  and the H\"older inequality with \eqref{mxm}, we get the estimate 
		\begin{align}\label{Holder1}\nonumber
			\left\|F_p(u)\right\|_{L_t^{q_1^{\prime}}\left(\mathcal{I} ; L_x^{r_1^{\prime}} L_y^{\tilde{r}_1^{\prime}}\right)} & \lesssim\left\|\chi_{\mathcal{I}}(t)|u|^{p-1} | u|\right\|_{L_t^{q_1^{\prime}}\left(\mathcal{I} ; L_x^{r_1^{\prime}} L_y^{L_1^{\prime}}\right)} \\
			&\leq C T^{1-\frac{p+1}{q_1}}\|u\|_{L_t^{q_1}\left(\mathcal{I} ; L_x^{r_1} L_y^{(p-1)(p+1) / 2 \varepsilon}\right)}^{p-1}\|u\|_{L_t^{q_1}\left(\mathcal{I} ; L_x^{r_1} L_y^{\tilde{r_0}}\right)}.
		\end{align}
	In a similar manner, for $0<s \leq 1$, using the following fractional chain rule for Laplacian on $\mathbb{R}^d$ $$\||\nabla|^s F(u)\|_{L^c}\leq \||F'(u)\|_{L^a}\||\nabla|^sF(u)\|_{L^c},$$
    where $\frac{1}{c}=\frac{1}{a}+\frac{1}{b},$ along with the H\"older inequality,  we obtain
		\begin{align}\label{Holder2}
			\left\|\left|\nabla_y\right|^s F_p(u)\right\|_{L_t^{q_1^{\prime}}\left(\mathcal{I} ; L_x^{r_1^{\prime}} L_y^{\tilde{r}_1}\right)} \leq C T^{1-\frac{p+1}{q_1}}\|u\|_{L_t^{q_1}\left(\mathcal{I} ; L_x^{r_1} L_y^{(p-1)(p+1) / 2 \varepsilon}\right)}^{p-1}\|u\|_{L_t^{q_1}\left(\mathcal{I} ; L_x^{r_1} W_y^{s, \tilde{r}_1}\right)}.
		\end{align}
		Using the fact that  $W^{s, \widetilde{r}_1} \subseteq L^{\widetilde{r}_1}$ for $s \geq 0$ and  by  the Sobolev embedding in two dimensions
		$$
		W^{s, \widetilde{r}_1} \hookrightarrow W^{1-\frac{2 \varepsilon}{p-1}, \widetilde{r}_1} \hookrightarrow L^{\frac{(p-1)(p+1)}{2 \epsilon_1}},
		$$
		the inequalities \eqref{Holder1} and \eqref{Holder2}  further   reduces to 
		\begin{align}\label{using embedding }
			\left\||\nabla_y|^s F_p(u)\right\|_{L_t^{q_1^{\prime}}\left(\mathcal{I} ; L_x^{r_1^{\prime}} L_y^{\vec{r}^{\prime}}\right)} \leq C T^{\beta_1(d, p, s)}\|u\|_{L_t^{q_1}\left(\mathcal{I} ; L_x^{r_1} W_y^{s, \bar{r}_1}\right)}^p
		\end{align}
		with $1 \geq  s \geq 0$ and $\beta_1(d, p, s)=1-\frac{p+1}{q_1} =\frac{d-2}{2m}\left(1+\frac{2m}{d-2}-p\right)-\frac{m\varepsilon}{2}>0.$
		Thus using the inequalities \eqref{Holder1} and \eqref{using embedding }, we can write 
		\begin{align}\label{For banach}\nonumber
			\left\|\left\langle\nabla_y\right\rangle^s F_p(u)\right\|_{L_t^{q_1^{\prime}}\left(\mathcal{I} ; L_x^{r_1^{\prime}} L_y^{\tilde{r}_1^{\prime}}\right)} &\lesssim\left\|F_p(u)\right\|_{L_t^{q_1^{\prime}}\left(\mathcal{I} ; L_x^{r_1^{\prime}} L_y^{\tilde{r}_1}\right)}+\left\|\left|\nabla_y\right|^s F_p(u)\right\|_{L_t^{q_1^{\prime}}\left(\mathcal{I} ; L_x^{r_1^{\prime}} L_y^{\tilde{r}_1^{\prime}}\right)} \\
			&\lesssim  T^{\beta_1(d, p, s)}  \|u\|_{L_t^{q_1}\left(\mathcal{I} ; L_x^{r_1} W_y^{s, \bar{r}_1}\right)}^p.
		\end{align}
		In a similar way, we can have
		\begin{align}\label{To prove 2}\nonumber
			& \left\|F_p(u)-F_p(v)\right\|_{L_t^{q_1^{\prime}}\left(\mathcal{I}; L_x^{r_1^{\prime}} L_y^{\tilde{r_1}'}\right)} \\
			& \leq C T^{\beta_1(d, p, s)}\left(\|u\|_{L_t^{q_1}\left(\mathcal{I}; L_x^{r_1} W_y^{s, \tilde{r}_1}\right)}^{p-1}+\|v\|_{L_t^{q_1}\left(\mathcal{I} ; L_x^{r_1} W_y^{s, \tilde{r}_1}\right)}^{p-1}\right)\|u-v\|_{L_t^{q_1}\left(\mathcal{I}; L_x^{r_1} L_y^{\tilde{r}_1}\right)},
		\end{align} 
        by using the following elementary properties for the nonlinearity $F_p(u):$
        \begin{equation}
            |F_p(u)-F_p(v)| \leq C (|u|^{p-1}+|v|^{p-1}) |u-v|.
        \end{equation}
		Now our aim is to establish \eqref{Banach1} and \eqref{Banach2}. 
		By Plancherel's theorem, a dual version of  the Strichartz inequality \eqref{eq11111} with $s=0$, and from the inequality  \eqref{For banach}, one can see that 
		$$
		\begin{aligned}
			\sup _{t \in \mathcal{I}}\|Nu \|_{L_x^2 H_y^s} & \leq 
			\sup _{t \in \mathcal{I}} 	\|	e^{i t (-\Delta)^{\frac{m}{2}}} f \|_{L_x^2 H_y^s} +	\sup _{t \in \mathcal{I}}\left \| \int_0^t e^{i(t-\tau) (-\Delta)^{\frac{m}{2}}} F_p(u)\; d \tau \right\|_{L_x^2 H_y^s}\\ 		
			& \leq C\|f\|_{L_x^2 H_y^s}+C	\sup _{t \in \mathcal{I}} \left\|\left\langle\nabla_y\right\rangle^s \int_0^t e^{-i \tau (-\Delta)^{\frac{m}{2}}} F_p(u) d \tau\right\|_{L_x^2 L_y^2} \\
			& \leq C\|f\|_{L_x^2 H_y^s}+C\left\|\left\langle\nabla_y\right\rangle^s F_p(u)\right\|_{L_t^{q_1^{\prime}}\left(\mathcal{I} ; L_x^{r_1^{\prime}} L_y^{\tilde{r}_1^{\prime}}\right)}\\
			&	\leq C\|f\|_{L_x^2 H_y^s}+C	T^{\beta_1(d, p, s)}  \|u\|_{L_t^{q_1}\left(\mathcal{I} ; L_x^{r_1} W_y^{s, \bar{r}_1}\right)}^p,
		\end{aligned}
		$$
		for some $\left(q_1, r_1, \widetilde{r}_1\right) $  that satisfies 		$\frac{{{m}}}{q_1}+\frac{d-2}{r_1}+\frac{2}{\widetilde{r}_1}=\frac{d}{2}.$  Thus for $u \in X$, it follows that
		\begin{equation}\label{final1}
		    \sup _{t \in \mathcal{I}}\|Nu \|_{L_x^2 H_y^s} 	\leq C\|f\|_{L_x^2 H_y^s}+C	T^{\beta_1(d, p, s)} A^p.
		\end{equation}	 
		
		Let us now proceed to prove our next aim to show inequality of the form \eqref{Banach2}.  	 Now using  
		\eqref{inhom} and \eqref{To prove 2}, we get 
		\begin{align}\label{final2}\nonumber
			d(Nu, Nv) & =\|Nu-Nv\|_{L_t^q\left(I ; L_x^r L_y^{\tilde{r}}\right)}\\\nonumber
			&=\left\|\int_0^t e^{i(t-\tau)  (-\Delta)^{\frac{m}{2}}} \left( F_p(u)- F_p(v) \right)d \tau\right\|_{L_t^q\left(\mathcal{I} ; L_x^r L_y^{\tilde{r}}\right)} \\\nonumber
			& \leq C\left\|F_p(u)-F_p(v)\right\|_{L^{q_1^{\prime}}\left(\mathcal{I} ; L_x^{r_1^{\prime}} L_y^{\bar{r}_1^{\prime}}\right)}\\\nonumber
			&\leq C T^{\beta_1(d, p, s)}  \left(\|u\|_{L_t^{q_1}\left(\mathcal{I}; L_x^{r_1} W_y^{s, \tilde{r}_1}\right)}^{p-1}+\|v\|_{L_t^{q_1}\left(\mathcal{I} ; L_x^{r_1} W_y^{s, \tilde{r}_1}\right)}^{p-1}\right)\|u-v\|_{L_t^{q_1}\left(\mathcal{I}; L_x^{r_1} L_y^{\tilde{r}_1}\right)}\\ 				
			& \leq 2 C T^{\beta_1(d, p, s)} A^{p-1} d(u, v)
		\end{align} 	for $u, v \in X(T)$.  By choosing $A=2 C\|f\|_{L_x^2 H_y^s}$ and taking  sufficiently small $T>0$  such  that
		$$
		2 C T^{\beta_1(d, p, s)} A^{p-1} \leq \frac{1}{2}.
		$$
		Then \eqref{final1} and \eqref{final2}	 reduces to 	  
		\begin{align}\label{final11}
			\|Nu\|_{L^q\left(\mathcal{I} ; L_x^r W_y^{s, \bar{r}}\right)} \leq A
		\end{align}
		and  	 \begin{align}\label{final12}
			d(Nu, Nv)  \leq \frac{1}{2} d(u, v),
		\end{align}
		respectively 	for $u, v\in X(T).$  Thus from (\ref{final11}), we see that  $Nu \in X(T)$, i.e.,  $N$ maps $X(T)$ into itself. 
		Also from (\ref{final12}),  it follows that the map $N$ is a contraction mapping on the ball $\mathcal{B}(R)$ around the origin in the Banach space $X(T)$. Therefore, by Banach's fixed point theorem,  there exists a uniquely determined fixed point $u^*$ of the operator $N$, which means $u^*=Nu^* \in X(T)$.  This fixed point $u^*$ will be  our mild solution to (\ref{higher order})    on $[0, T]$. This implies that there exists a local   small data Sobolev solution $u^*$ of the equation $ u^*=Nu^* $ in $ X(T)$, which also gives the solution to the   equation (\ref{higher order})  and this completes the proof of the local well-posedness theorem. 
	\end{proof}

	\section{Strichartz estimates for Dispersive Dunkl semigroup with  partial regular initial data}\label{sec5}
In this section, we recall some basic definitions and important properties of the Dunkl operators and briefly overview the related harmonic analysis to make the paper self-contained. A complete account of harmonic analysis related to Dunkl operators can be found in \cite{ros, ben, Ratna3, Mejjaoli2009, MS, Jiu-Li, FMSW}.  
	\subsection{Dunkl (differential-difference) operators}\label{dunklop} Root systems together with finite reflection groups form the fundamental building blocks of the theory of Dunkl operators. We begin this section by recalling the definition of a root system.
	
	Let $\langle\cdot, \cdot\rangle$  denotes the standard Euclidean scalar product in $\mathbb{R}^{d}$. For $x \in \mathbb{R}^{d}$, we denote   $|x|$ as $|x|=\langle x, x\rangle^{1 / 2}$.
	For $\alpha \in \mathbb{R}^{d} \backslash\{0\},$ denote $r_{\alpha}$ as  the reflection with respect to the hyperplane $\langle \alpha\rangle^{\perp}$, orthogonal to $\alpha$ and is defined by
	$$
	r_{\alpha}(x):=x-2 \frac{\langle \alpha, x\rangle}{|\alpha|^{2}} \alpha, \quad x \in \mathbb{R}^{d}.
	$$
	A finite set $\mathcal{R}$ in $\mathbb{R}^{d} \backslash\{0\}$ is  said to be a  root system if the following holds:
	\begin{enumerate}
		\item $ r_{\alpha}(\mathcal{R})=\mathcal{R}$ for all $\alpha \in \mathcal{R}$,
		\item $\mathcal{R} \cap \mathbb{R} \alpha=\{\pm \alpha\}$ for all $\alpha \in \mathcal{R}$.
	\end{enumerate}
	For a given root system $\mathcal{R}$, the subgroup $G \subset O(n, \mathbb{R})$ generated by the reflections $\left\{r_{\alpha} \mid \alpha \in \mathcal{R}\right\}$ is called the finite Coxeter group associated with $\mathcal{R}$.  The dimension of $span \mathcal{R}$ is called the rank of $\mathcal{R}$.   For a   detailed study on the theory of the   finite reflection groups, we refer the reader to  \cite{hum}.   For some $\beta\in\mathbb{R}^{d}\backslash\bigcup_{\alpha\in\mathcal{R}}\langle \alpha\rangle^{\perp}$, let    $\mathcal{R}^+:=\{\alpha\in\mathcal{R}:\langle\alpha,\beta\rangle>0\}$ be a  fix  positive root system.  
	
	Typical examples of such systems include the Weyl groups, for instance, the symmetric group  $S_{n}$ corresponding to the root system of type  $A_{n-1}$ and the hyperoctahedral group associated with type $B_{n}$. In addition, $H_{3}, H_{4}$ (icosahedral groups) and, $I_{2}(n)$ (symmetry group of the regular $n$-gon)  also belong to the class of Coxeter groups.
	
	A multiplicity function associated with
$G$ is a function $\kappa: \mathcal{R} \rightarrow \mathbb{C}$ that remains constant on the $G$-orbits. Setting $\kappa_{\alpha}:=\kappa(\alpha)$ for $\alpha \in \mathcal{R},$  from the definition of $G$-invariant,  we have $\kappa_{g \alpha}=\kappa_{\alpha}$ for all $g \in G$.  We say $\kappa$ is non-negative if $\kappa_{\alpha} \geq 0$ for all $\alpha \in \mathcal{R}$. The $\mathbb{C}$-vector space of non-negative multiplicity functions on $\mathcal{R}$ is denoted by $\mathcal{K}^{+}$. We also  denote  $\gamma$ as $\gamma=\gamma(\kappa):=\sum\limits_{\alpha\in\mathcal{R}^+}\kappa(\alpha)$. For simplicity of notation, we will use the notation  $N:=d+2\gamma$ to denote the homogeneous dimension throughout the paper. 
	
	In 1989, Dunkl introduced, for $\xi \in \mathbb{C}^{n}$ and $\kappa \in \mathcal{K}^{+},$ a family of first-order differential–difference operators $T_{\xi}:= T_{\xi}(\kappa)$ defined by
	\begin{align}\label{dunkl}
		T_{\xi}(\kappa) f(x):=\partial_{\xi} f(x)+\sum_{\alpha \in \mathcal{R}^{+}} \kappa_{\alpha }\langle \alpha, \xi\rangle \frac{f(x)-f\left(r_{\alpha} x\right)}{\langle \alpha, x\rangle}, \quad f \in C^{1}\left(\mathbb{R}^{d}\right),
	\end{align}
	where $\partial_{\xi}$ denotes the directional derivative corresponding to $\xi$.  The operator $T_\xi$   defined in (\ref{dunkl}) and commonly referred to as the Dunkl operator, represents one of the most significant advances in the theory of special functions linked to root systems \cite{dun}. These operators commute with each other and are skew-symmetric relative to the $G$-invariant measure $h_{\kappa}(x)dx$, where the weight function is given by $$h_{\kappa}(x):=\prod\limits_{\alpha\in \mathcal{R}^{+}}|\langle \alpha, x\rangle|^{2 \kappa_{\alpha}}$$ is of homogeneous degree $2\gamma$. Because the multiplicity function is $G$-invariant  the definition of the Dunkl operator does not depend on the particular choice of the positive subsystem  $\mathcal{R}^{+}$.  In \cite{dun1991}, it is shown that for any $\kappa\in\mathcal{K}^+$, there is a unique linear isomorphism $V_\kappa$ (Dunkl's intertwining operator) on the space $\mathcal{P}(\mathbb{R}^{d})$ of polynomials on $\mathbb{R}^{d}$ such that
	\begin{enumerate}
		\item $V_\kappa\left(\mathcal{P}_m(\mathbb{R}^{d})\right)=\mathcal{P}_m(\mathbb{R}^{d})$ for all $m\in\mathbb{N}$,
		\item $V_\kappa|_{\mathcal{P}_0(\mathbb{R}^{d})}=id$,
		\item $T_\xi(\kappa)V_\kappa=V_\kappa\partial_\xi$,
	\end{enumerate}
	where  $\mathcal{P}_m(\mathbb{R}^{d})$ denotes the space of homogeneous polynomials of degree $m$. R\"{o}sler in \cite{ros} proved that, for any finite reflection group $G$ and   any $k\in\mathcal{K}^+$,   there exists a unique positive Radon probability measure $\rho_x^\kappa$ on $\mathbb{R}^{d}$ such that
	\begin{equation}\label{239}	
    V_\kappa f(x)=\int_{\mathbb{R}^{d}}f(\xi)d\rho_x^\kappa(\xi).
	\end{equation}
The measure $\rho_x^k$ depends on $x\in\mathbb{R}^{d}$ and its support is contained in the ball $B\left(\|x\|\right):=\{\xi\in\mathbb{R}^{d}: \|\xi\|\leq\|x\|\}$. Owing to the Laplace-type representation \eqref{239}, Dunkl’s intertwining operator  $V_\kappa$ admits an extension to a broader class of function spaces.
	
	Let $\{\xi_1, \xi_2, \cdots, \xi_d\}$ be an orthonormal basis of $(\mathbb{R}^{d} , \langle \cdot, \cdot\rangle )$. Then  the Dunkl Laplacian operator $\Delta_\kappa $ is defined as $$\Delta_\kappa=\sum_{j=1}^dT^2_{\xi_j}(\kappa).$$
	The definition of  $\Delta_\kappa$ is independent of the choice of the orthonormal basis of $\mathbb{R}^{d}.$ In fact,   one can see that the operator $\Delta_\kappa $ also can be expressed as $$\Delta_\kappa f(x)=\Delta f(x)+\sum_{\alpha \in \mathcal{R}^+}\kappa_\alpha \left\{\frac{2 \langle \nabla f(x),  \alpha \rangle}{\langle \alpha, x\rangle } -|\alpha|^2 \frac{f(x)-f(r_\alpha x)}{\langle \alpha, x\rangle^2 } \right\}, \quad f\in C^1(\mathbb{R}^{d}),$$
	where $\nabla$ and $\Delta$ are    the usual gradiant  and   Laplacian operator on $\mathbb{R}^{d}$, respectively.  Note  that, for $\kappa \equiv 0$, coincides with the classical Euclidean Laplacian $\Delta$. 
	
	For every $y\in\mathbb{R}^{d}$, the system 
	\begin{equation*}
		\begin{cases}
			T_\xi u(x,y)=\langle y,\xi\rangle\, u(x,y), \quad \xi\in\mathbb{R}^{d},\\
			u(0,y)=1,
		\end{cases}
	\end{equation*}
	admits a unique analytic solution on $\mathbb{R}^{d}$, which we denote as  $E_\kappa(x,y)$, which is generally referred to as the Dunkl kernel. The kernel has a unique holomorphic extension to $\mathbb{C}^n\times \mathbb{C}^n$ and is given by
	\begin{equation*}
		E_\kappa(x,y)=V_\kappa(e^{\langle \cdot,y\rangle})(x)=\int_{\mathbb{R}^{d}}e^{\langle \xi,y\rangle}d\rho_x^\kappa(\xi),\quad \forall x,y\in\mathbb{R}^{d}.
	\end{equation*}
	When $\kappa \equiv 0$, the Dunkl kernel $E_\kappa(x,y)$  reduces to the exponential $e^{\langle x,y\rangle}$. For $x,y\in\mathbb{R}^{d}$ and  $z,w\in\mathbb{C}^n$, the Dunkl kernel satisfies the following properties:
	\begin{enumerate}[(i)]
		\item $E_\kappa(z,w)=E_\kappa(w,z)$ and $E_\kappa(\lambda z,w)=E_\kappa(z,\lambda w),\quad \lambda\in\mathbb{C}$;
		\item  for any $\nu\in\mathbb{N}^n$, we have
		$$|D_z^\nu E_\kappa(x,z)|\leq |x|^{|\nu|}e^{|Re(z)||x|},$$
		where $D_z^\nu=\frac{\partial^{|\nu|}}{\partial {z_1}^{\nu_1}\cdots {z_n}^{\nu_n}}$ and $|\nu|=\nu_1+\cdots+\nu_n$. In particular, for any $x,y\in\mathbb{R}^{d}$, 
		$$|E_\kappa(x,iy)|\leq 1;$$
		\item $E_\kappa(ix,y)=\overline{E_\kappa(-ix,y)}$ and $E_\kappa(gx,gy)=E_\kappa(x,y),\quad g\in G$.
	\end{enumerate}
	
	\subsection{The Dunkl transform} \label{Dunkl transform}
	For $1\leq p<\infty$, let $L_\kappa^p\left(\mathbb{R}^{d}\right)$ be the space of $L^p$-functions on $\mathbb{R}^{d}$ with respect to the weight $h_{\kappa_1}(x)$ with the $L_\kappa^p\left(\mathbb{R}^{d}\right)$-norm
	\begin{equation*}
		\|f\|_{L_\kappa^p\left(\mathbb{R}^{d}\right)}=\left(\int_{\mathbb{R}^{d}}|f(x)|^ph_{\kappa_1}(x)dx\right)^\frac{1}{p},
	\end{equation*}
	and $\|f\|_{L_\kappa^\infty\left(\mathbb{R}^{d}\right)}=ess \sup\limits_{x\in\mathbb{R}^{d}}|f(x)|$.
	The Dunkl transform is defined on $L_\kappa^1\left(\mathbb{R}^{d}\right)$ by
	\begin{equation}\label{Dunkltransform}
		\mathcal{F}_\kappa f(y)=\frac{1}{c_\kappa}\int_{\mathbb{R}^{d}} f(x) E_\kappa(x,-iy)h_{\kappa_1}(x)dx,
	\end{equation}
	where $c_\kappa$ is the Mehta-type constant defined by
	\begin{equation*}
		c_\kappa=\int_{\mathbb{R}^{d}} e^{-\frac{|x|^2}{2}}h_{\kappa_1}(x)dx.
	\end{equation*}
	When $\kappa \equiv 0$,the Dunkl transform reduces to the classical Fourier transform.  With $N=2\gamma+n$, here we list some basic properties of the Dunkl transform.
	\begin{enumerate}[(i)]
		\item  For all $f\in L_\kappa^1\left(\mathbb{R}^{d}\right)$, we have
		\begin{equation*}
			\|\mathcal{F}_\kappa f\|_{L_\kappa^\infty\left(\mathbb{R}^{d}\right)}\leq \frac{1}{c_\kappa} \|f\|_{L_\kappa^1\left(\mathbb{R}^{d}\right)}.
		\end{equation*}
		\item For any function $f$ in the Schwartz space $\mathcal{S}\left(\mathbb{R}^{d}\right)$, we have
		\begin{equation*}
			\mathcal{F}_\kappa(T_\xi f)(y)=i\langle \xi, y\rangle \mathcal{F}_\kappa f(y),\quad y,\xi\in\mathbb{R}^{d}.
		\end{equation*}
		In particular, it follows that 
		\begin{equation*}
			\mathcal{F}_\kappa(\Delta_\kappa f)(y)=-|y|^2 \mathcal{F}_\kappa f(y),\quad y\in\mathbb{R}^{d}.
		\end{equation*}
		\item  For all $f\in L_\kappa^1\left(\mathbb{R}^{d}\right)$, we have
		\begin{equation*}
			\mathcal{F}_\kappa (f(\cdot/\lambda))(y)=\lambda^N\mathcal{F}_\kappa f(\lambda y),\quad y\in\mathbb{R}^{d}, \lambda>0.
		\end{equation*} 
		\item The Dunkl transform $\mathcal{F}_\kappa$ is a homeomorphism of the Schwartz space $\mathcal{S}\left(\mathbb{R}^{d}\right)$ and its inverse is given by $\mathcal{F}_\kappa^{-1}g(x)=\mathcal{F}_\kappa g(-x),$ for all $ g\in \mathcal{S}\left(\mathbb{R}^{d}\right)$. In addition, for all $f\in \mathcal{S}\left(\mathbb{R}^{d}\right)$, it satisfies
		\begin{equation*}
			\int_{\mathbb{R}^{d}} |f(x)|^2 h_{\kappa_1}(x)dx=\int_{\mathbb{R}^{d}} |\mathcal{F}_\kappa f(y)|^2 h_\kappa(y)dy.
		\end{equation*}
		\item For every $f\in L_\kappa^1\left(\mathbb{R}^{d}\right)$ such that $\mathcal{F}_\kappa f\in L_\kappa^1\left(\mathbb{R}^{d}\right)$, we have the inversion formula
		\begin{equation*}
			f(x)=\frac{1}{c_\kappa}\int_{\mathbb{R}^{d}} \mathcal{F}_\kappa f(y) E_\kappa(ix,y)h_\kappa(y)dy,\quad  a.e. \;x\in\mathbb{R}^{d}.
		\end{equation*}
		\item  If $f$ is a radial function in $ L_\kappa^1\left(\mathbb{R}^{d}\right)$ such that $f(x)=\tilde{f}(|x|)$, then
		\begin{equation}\label{radial-Dunkl}
			\mathcal{F}_\kappa f(y)=\frac{1}{\Gamma(N/2)}\int_0^\infty \tilde{f}(r)\frac{J_\frac{d-2}{2}(r|y|)}{\left(r|y|\right)^\frac{d-2}{2}}r^{N-1} dr,
		\end{equation}
		where $J_\nu$ denotes the Bessel function of order $\nu>-\frac{1}{2}$.
		
		\item For $1\leq p\leq \infty$ and $s\in\mathbb{R}$, the homogeneous Dunkl-Sobolev space $\dot{H}_\kappa^{p,s}\left(\mathbb{R}^{d}\right)$ is the set of tempered distributions $u \in \mathcal{S}'\left(\mathbb{R}^{d}\right)$ such that $\mathcal{F}^{-1}_\kappa\left(|\cdot|^s \mathcal{F}_\kappa f\right)\in L_\kappa^p\left(\mathbb{R}^{d}\right)$ and is equiped with the norm 
		\begin{equation*}
			\|u\|_{\dot{H}_\kappa^{p,s}\left(\mathbb{R}^{d}\right)}=\left\|\mathcal{F}^{-1}_\kappa\left(|\cdot|^s \mathcal{F}_\kappa f\right)\right\|_{L_\kappa^p\left(\mathbb{R}^{d}\right)}.
		\end{equation*}
		In particular, when $p=2$, we denote $\dot{H}_\kappa^{p,s}\left(\mathbb{R}^{d}\right)$ by $\dot{H}_\kappa^{s}\left(\mathbb{R}^{d}\right)$ in short and $\dot{H}_\kappa^{0}\left(\mathbb{R}^{d}\right)=L_\kappa^2\left(\mathbb{R}^{d}\right)$.\\
	\item 	For any $1\leq p_1\leq p_2\leq\infty$ and $s_1\leq s_2 \in\mathbb{R}$ such that $s_1-\frac{d}{p_1}=s_2-\frac{d}{p_2}$, we have
		\begin{equation}\label{Sobolev-Embedding}
			\dot{H}_\kappa^{p_1,s_1} (\mathbb{R}^{d})\subseteq \dot{H}_\kappa^{p_2,s_2} (\mathbb{R}^{d}).
		\end{equation}
\end{enumerate}
	\subsection{Dunkl convolution operator}\label{sub5.3} For  given $x\in\mathbb{R}^{d}$, the Dunkl translation operator $f\mapsto \tau_x f$ is defined on $\mathcal{S}\left(\mathbb{R}^{d}\right)$ by
	\begin{equation*}
		\mathcal{F}_\kappa(\tau_x f)(y)=E_\kappa(x,iy)\mathcal{F}_\kappa f(y),\quad y\in\mathbb{R}^{d}.
	\end{equation*}
	Currently, an explicit formula for the Dunkl translation operator is available only in two cases. When $f(x)=\tilde{f}(|x|)$ is a continuous radial function in $L_\kappa^2(\mathbb{R}^{d})$, the Dunkl translation operator is represented by (see \cite{ros, Dai-Wang})
	\begin{equation} \label{translator}
		\tau_xf(y)=V_\kappa[\tilde{f}\left(\sqrt{|y|^2+|x|^2-2\langle y,\cdot\rangle}\right)](x)=\int_{\mathbb{R}^{d}}\tilde{f}\left(\sqrt{|y|^2+|x|^2-2\langle y,\xi\rangle}\right)d\rho^\kappa_x(\xi).
	\end{equation}
The Dunkl translation operator can likewise be extended to the space of tempered distributions $\mathcal{S}'\left(\mathbb{R}^{d}\right)$.Extending key results from classical Fourier analysis to the framework of the Dunkl transform proves to be quite challenging. A major difficulty arises from the fact that the Dunkl translation operator is, in general, not positive. Even $L_\kappa^p(\mathbb{R}^{d})$-boundedness of $\tau_x$ is not established in general. On the other hand, if  $f$ is a radial function in $L_\kappa^p(\mathbb{R}^{d})$, $1\leq p\leq \infty$, the following holds (see \cite{GIT2019, Thangavelu-Xu2005})
	\begin{equation*}
		\|\tau_xf\|_{L_\kappa^p(\mathbb{R}^{d})}\leq \|f\|_{L_\kappa^p(\mathbb{R}^{d})}.
	\end{equation*}
	Using the Dunkl translation operator,  the Dunkl convolution  of functions $f,g\in \mathcal{S}\left(\mathbb{R}^{d}\right)$ is defined as 
	\begin{equation*}
		f*_\kappa g(x)=\int_{\mathbb{R}^{d}}\tau_xf(-y)g(y)h_\kappa(y)dy, \quad x\in\mathbb{R}^{d}.
	\end{equation*}
	The Dunkl convolution enjoys the following properties.
	\begin{enumerate}
		\item  $\mathcal{F}_\kappa(f*_\kappa g)=\mathcal{F}_\kappa (f)\mathcal{F}_\kappa (g)$, $\mathcal{F}^{-1}_\kappa(f*_\kappa g)=\mathcal{F}^{-1}_\kappa (f)\mathcal{F}^{-1}_\kappa (g)$, and $f*_\kappa g=g*_\kappa f$.
		\item Young's inequality: Let $1\leq p,q,r\leq\infty$ such that $1+\frac{1}{r}=\frac{1}{p}+\frac{1}{q}$. If $f\in L^p_\kappa(\mathbb{R}^{d})$ and $g$ is a radial function of $L_\kappa^q(\mathbb{R}^{d})$, then $f*_\kappa g\in L_\kappa^r(\mathbb{R}^{d})$ and we have
		\begin{equation}\label{Young}
			\|f*_\kappa g\|_{L_\kappa^r(\mathbb{R}^{d})}\leq \|f\|_{L_\kappa^p(\mathbb{R}^{d})} \|g\|_{L_\kappa^q(\mathbb{R}^{d})}.
		\end{equation}
	\end{enumerate}
	Since the Dunkl transform of radial functions is explicitly expressed by the integral related to Bessel functions, we first list some properties of Bessel functions.
	
	Let $J_\nu$ be the Bessel function of order $\nu>-\frac{1}{2}$, defined as
	\begin{equation*}
		J_\nu(r)=\frac{(\frac{r}{2})^\nu}{\Gamma(\nu+\frac{1}{2})\pi^{\frac{1}{2}}}\int_{-1}^1e^{ir\tau}(1-\tau^2)^{\nu-\frac{1}{2}}d\tau.
	\end{equation*}
	
	\begin{lem}\cite{LMS}\label{Bessel}
		For $r>0$ and $\nu>-\frac{1}{2}$, we have
		\begin{align}
			&(1)J_\nu(r)\leq C_\nu r^\nu,0<r<1;\label{bessel1}\\
			&(2)\frac{d}{dr}\left(r^{-\nu}J_\nu(r)\right)=-r^{-\nu}J_{\nu+1}(r);\label{bessel2}\\
			&(3)J_\nu(r)\leq C_\nu r^{-\frac{1}{2}}, r\geq1;\label{bessel3}\\
			&(4)J_\nu(r)=\frac{(\frac{r}{2})^\nu}{\Gamma(\nu+\frac{1}{2})\Gamma(\frac{1}{2})} \left[ie^{-ir}\int_0^\infty e^{-rt}(t^2+2it)^{\nu-\frac{1}{2}}dt-ie^{ir}\int_0^\infty e^{-rt}(t^2-2it)^{\nu-\frac{1}{2}}dt\right].\label{bessel4}
		\end{align}
	\end{lem}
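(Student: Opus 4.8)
The plan is to read off all four assertions from the Poisson-type integral representation recorded just above the statement,
$$J_\nu(r)=\frac{(r/2)^\nu}{\Gamma(\nu+\tfrac12)\Gamma(\tfrac12)}\int_{-1}^{1}e^{ir\tau}(1-\tau^2)^{\nu-\tfrac12}\,d\tau,$$
which makes sense exactly because $\nu>-\tfrac12$ renders the integrand integrable at $\tau=\pm1$. For (1) I would take moduli, bound $|e^{ir\tau}|\le1$, and evaluate $\int_{-1}^{1}(1-\tau^2)^{\nu-1/2}\,d\tau=\Gamma(\tfrac12)\Gamma(\nu+\tfrac12)/\Gamma(\nu+1)$ as a Beta integral; this gives $|J_\nu(r)|\le (r/2)^\nu/\Gamma(\nu+1)$ for every $r>0$, in particular the claimed bound on $(0,1)$. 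For (2) I would expand $e^{ir\tau}=\sum_{k\ge0}(ir\tau)^k/k!$ and integrate term by term---odd powers of $\tau$ drop out, even powers give Beta values---to recover the power series $J_\nu(r)=\sum_{k\ge0}\frac{(-1)^k}{k!\,\Gamma(k+\nu+1)}(r/2)^{2k+\nu}$; multiplying by $r^{-\nu}$, differentiating termwise in $r$, and shifting the index $k\mapsto k+1$ yields $-r^{-\nu}J_{\nu+1}(r)$ directly.

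The substantive step is (4), from which (3) will follow. Fixing $r>0$, I would regard $\int_{-1}^{1}e^{ir\tau}(1-\tau^2)^{\nu-1/2}\,d\tau$ as a contour integral of $e^{irz}(1-z^2)^{\nu-1/2}$ with the principal branch of the power; since $1-z^2$ never lies on $(-\infty,0]$ when $\Im z>0$, this function is holomorphic on the open upper half plane, continuous up to $[-1,1]$, with singularities at $z=\pm1$ that are integrable because $\nu>-\tfrac12$. Applying Cauchy's theorem to the rectangle with corners $-1,1,1+iR,-1+iR$ (with vanishing small indentations at $\pm1$) and letting $R\to\infty$---the top edge contributes at most $e^{-rR}$ times a fixed power of $R$, hence vanishes since $r>0$---replaces $[-1,1]$ by the two vertical half-lines. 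On the left I would set $z=-1+it$ and on the right $z=1+it$, $t\in(0,\infty)$, use $1-(-1+it)^2=t^2+2it$, $1-(1+it)^2=t^2-2it$ and $e^{ir(\pm1+it)}=e^{\pm ir}e^{-rt}$, and keep track of the orientations; this produces exactly \eqref{bessel4}.

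Finally, for (3) I would start from \eqref{bessel4} and rescale $s=rt$ in each integral. Since $\frac{s^2}{r^2}\pm\frac{2is}{r}=\frac1r\!\left(\frac{s^2}{r}\pm2is\right)$, each integral turns into
$$r^{-\nu-1/2}\int_0^\infty e^{-s}\left(\frac{s^2}{r}\pm2is\right)^{\nu-1/2}ds,$$
and for $r\ge1$ one has $2s\le\left|\frac{s^2}{r}\pm2is\right|\le s^2+2s$, so the integrand is dominated, uniformly in $r\ge1$, by $e^{-s}\big((2s)^{\nu-1/2}+(s^2+2s)^{\nu-1/2}\big)$, which is integrable on $(0,\infty)$ precisely because $\nu>-\tfrac12$. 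Multiplying by the prefactor $(r/2)^\nu$ then gives $|J_\nu(r)|\le C_\nu r^{-1/2}$ for $r\ge1$. The only delicate point in this scheme is the contour deformation underlying (4): one must make sure the principal branch is unambiguous on the shifted path, confirm integrability at the bottom endpoints $z=\pm1$, and control the top edge of the rectangle---each routine, but the whole argument hinges on doing these carefully; everything else reduces to elementary manipulations of the integral representation and the power series.
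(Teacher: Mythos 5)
Your proof is correct, but note that the paper itself offers no argument for this lemma: it is imported verbatim from \cite{LMS} (and the four items are classical facts about Bessel functions, cf.\ Watson's treatise or Stein--Weiss), so there is no ``paper proof'' to compare against. Judged on its own merits, your derivation is sound and self-contained. Deducing (1) from the Poisson representation via the Beta integral $\int_{-1}^{1}(1-\tau^2)^{\nu-1/2}\,d\tau=\Gamma(\tfrac12)\Gamma(\nu+\tfrac12)/\Gamma(\nu+1)$ is correct (and in fact gives the bound for all $r>0$); the term-by-term recovery of the power series and the index shift for (2) check out. The contour deformation for (4) is the only step with real content, and you have identified exactly the points that need care: the principal branch of $(1-z^2)^{\nu-1/2}$ is single-valued on the open upper half-plane because $\operatorname{Im}(1-z^2)=-2xy$ vanishes there only on the imaginary axis, where $1-z^2=1+y^2>0$; the indentations at $z=\pm1$ contribute $O(\epsilon^{\nu+1/2})\to0$ precisely because $\nu>-\tfrac12$; and the top edge is $O(R^{2\nu-1}e^{-rR})\to0$ for fixed $r>0$. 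The orientations and the identities $1-(\mp1+it)^2=t^2\pm2it$ reproduce \eqref{bessel4} exactly. Finally, the rescaling $s=rt$ in (3) is legitimate (the factor $1/r>0$ pulls out of the principal power), and the uniform domination $2s\le\bigl|\tfrac{s^2}{r}\pm2is\bigr|\le s^2+2s$ for $r\ge1$ gives an integrable majorant near $s=0$ again exactly because $\nu-\tfrac12>-1$. No gaps.
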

	\begin{rem}
		From \eqref{bessel4}, we have the identity
		\begin{equation}\label{Bessel-Fourier}
			\frac{J_{\frac{d-2}{2}}(r)}{r^\frac{d-2}{2}}=C\left(e^{ir}h(r)+e^{-ir}\overline{h(r)}\right),
		\end{equation}
		where $$h(r)=-i\int_0^\infty e^{-rt}(t^2-2it)^\frac{d-3}{2}dt,$$
		and  for any $\beta\in\mathbb{N}$, one can get     
		\begin{equation}\label{h}
			\left|\frac{d^\beta}{dr^\beta}h(r)\right|\leq C_\beta(1+r)^{-\frac{d-1}{2}-\beta},
		\end{equation}
		for all $\beta\in\mathbb{N}$. Thus, from \eqref{h}, for any $s\geq1$ and $\beta\in\mathbb{N}$, we can get 
		\begin{equation}\label{big-s}
			\left|\frac{d^\beta}{dr^\beta}\left(\psi(r)^2h(rs) r^{N-1}\right)\right|\leq C_\beta s^{-\frac{d-1}{2}}.
		\end{equation}
	\end{rem}

	\subsection{Partial regularity framework in Dunkl  analysis}
	Let  $\mathcal{R}_1$ and $\mathcal{R}_2$ be two root systems for   $\mathbb{R}^{d-k}$ and   $\mathbb{R}^k$,  respectively.  Then
	
	$$
	\mathcal{R}:=\mathcal{R}_1 \times(0)_{k} \cup(0)_{d-k} \times \mathcal{R}_2 
	$$
	where $(0)_j=(0,0, \ldots, 0) \in \mathbb{R}^j$, is a root system on $\mathbb{R}^{d}$. Let $\kappa_1: \mathcal{R}_1 \rightarrow \mathbb{Z}_{\geq 0}$ be a multiplicity function for $\mathcal{R}_1$ and $\kappa_2: \mathcal{R}_2 \rightarrow \mathbb{Z}_{\geq 0}$ be a multiplicity function for $\mathcal{R}_2$. Then we  define the multiplicity function $\kappa: \mathcal{R} \rightarrow \mathbb{Z}_{\geq 0}$ by$$
	\begin{array}{ll}
		\kappa(\alpha, 0)=\kappa_1(\alpha) & \text { for all } \alpha \in \mathcal{R}_1 \\
		\kappa(0, \beta)=\kappa_2(\beta) & \text { for all } \beta \in \mathcal{R}_2.
	\end{array}
	$$
	This defines $\kappa$ on all of $\mathcal{R}$. For this choice of the multiplicity function, partial counterparts of the Dunkl objects will actually become coordinate wise product of the corresponding linear objects. In fact, for any $(x, y)\in \mathbb{R}^{d}$, let $h_{\kappa_1}(x)$ and $h_{\kappa_2}(y)$ are the
	weight functions on $\mathcal{R}_1$ and $\mathcal{R}_2$ with homogeneous degree $2\gamma_1$ and  $2\gamma_2$.  Then $h_{\kappa}(x, y)=h_{\kappa_1}(x) h_{\kappa_2}(y)$ is the weight on $\mathcal{R}$  with homogeneous degree $2\gamma=2\gamma_1+2\gamma_2$. Moreover,  from definition we have
	$$
	d \mu_{\kappa}\left(x, y\right)=d \mu_{\kappa_1} (x ) d \mu_{\kappa_2} (y ). 
	$$

	Let $E_{\kappa_1}(x_1, y_1)$ and $E_{\kappa_2}(x_2, y_2)$  denotes  the Dunkl kernel   on $\mathbb{R}^k$ and $\mathbb{R}^{d-k}$ respectively corresponding to the multiplicity function $\kappa_1$ and $\kappa_2$.  Then  the kernel $E_{\kappa}$ on    $\mathbb{R}^{d}$ is given by $$E_{\kappa}\left(\left(x_1, x_2 \right),\left(y_1, y_2 \right)\right)=E_{\kappa_1}\left(x_1, y_1\right) E_{\kappa_2}\left(x_2, y_2\right).$$
	\begin{itemize}
		\item 	For $1\leq p<\infty$, let $L_\kappa^p\left(\mathbb{R}^{k}\times \mathbb{R}^{d-k}\right)$ be the space of $L^p$-functions on $\mathbb{R}^{d}$ with respect to the weight $h_{\kappa}(x,y)$ with the $L_\kappa^p\left(\mathbb{R}^{d}\right)$-norm
		\begin{equation*}
			\|f\|_{L_\kappa^p\left(\mathbb{R}^{d}\right)}=\left(\int_{\mathbb{R}^{k}} \int_{\mathbb{R}^{d-k}}|f(x,y)|^p h_{\kappa_1}(x) h_{\kappa_2}(y)dx  dy\right)^\frac{1}{p}.
		\end{equation*}
		\item 	Similarly, the space  $L_{\kappa_1, x}^p\left(\mathbb{R}^{d-k}\right)$ be the space of $L^p(\mathbb{R}^{k})$ with respect to the weight $h_{\kappa_1}(x)$ with the $L^p(\mathbb{R}^{k})$-norm
		\begin{equation*}
			\|g\|_{L_{\kappa_1, x}^p\left(\mathbb{R}^{k}\right)}=\left(\int_{\mathbb{R}^{k}}|g(x,y)|^p  h_{\kappa_1}(x)  dy\right)^\frac{1}{p}<\infty.
		\end{equation*}

		\item 	Similarly,  $L_{\kappa_2, y}^p\left(\mathbb{R}^{d-k}\right)$ be the space of $L^p(\mathbb{R}^{d-k})$ with respect to the weight $h_{\kappa_2}(y)$ with the $L^p(\mathbb{R}^{d-k})$-norm
		\begin{equation*}
			\|f\|_{L_{\kappa_2, y}^p\left(\mathbb{R}^{d-k}\right)}=\left(\int_{\mathbb{R}^{d-k}}|f(x,y)|^p  h_{\kappa_2}(y)  dy\right)^\frac{1}{p}.
		\end{equation*}
		\item Since the root system $\mathcal{R}$ is the union of two  orthogonal root system, we can  define Dunkl operator on $\mathcal{R}$ coordinate wise as
        \begin{align*}
		T_{j}(\kappa) f(x):=\partial_{j} f(x)+\sum_{\alpha \in \mathcal{R}^{+}} \kappa_{\alpha }\langle \alpha, \xi\rangle \frac{f(x)-f\left(r_{\alpha} x\right)}{\langle \alpha, x\rangle}, \quad f \in C^{1}(\mathbb{R}^{d}).
	\end{align*}
Consequently, the Dunkl Laplacian operator $\Delta_\kappa $ is defined as $$\Delta_\kappa=\sum_{j=1}^dT^2_{j}(\kappa).$$
           \item We define the Dunkl gradient of a complex valued function $f$ as the operator valued operator $$ \nabla_\kappa f=\left( T_1, T_2f, \cdots, T_df \right)$$ 
           and modulus by 
           $$|\nabla_\kappa f|(x)=\left(\sum_{j=1}^d |T_{j}(\kappa)f(x)|^2\right)^{\frac{1}{2}}.$$ 
           \item The homogeneous fractional operator $|\nabla_\kappa|^{s}$ is defined as $$|\nabla_\kappa|^{s}f:=\mathcal{F}^{-1}_\kappa\left(|\cdot|^s \mathcal{F}_\kappa f\right).$$
		\item For $1\leq p\leq \infty$ and $s\in\mathbb{R}$, the homogeneous Dunkl-Sobolev space $\dot{H}_\kappa^{p,s}\left(\mathbb{R}^{d}\right)$ is defined as $$\dot{H}_\kappa^{p,s}\left(\mathbb{R}^{d}\right)=\left\{u \in \mathcal{S}'\left(\mathbb{R}^{d}\right):   	\|u\|_{\dot{H}_\kappa^{p,s}\left(\mathbb{R}^{d}\right)}=\left\|\mathcal{F}^{-1}_\kappa\left(|\cdot|^s \mathcal{F}_\kappa f\right)\right\|_{L_\kappa^p\left(\mathbb{R}^{d}\right)}<\infty\right \}.$$
	\end{itemize}

\subsection{Littlewood-Paley projections}\label{L-P section}
Let $\psi: \mathbb{R}^{d} \rightarrow[0,1]$ be a radial smooth cut-off function supported in $$\{ \xi \in\left.\mathbb{R}^{d}  : \frac{1}{2} \leq|\xi | \leq 2\right\}$$ such that
	$$
	\sum_{j \in \mathbb{Z}} \psi\left(2^{-j} \xi \right)=1.
	$$
   For $j \in \mathbb{Z}$, the Littlewood-Paley operator $\mathcal{P}_j$ is defined as follows:
	$$
	\mathcal{F}_\kappa(\mathcal{P}_jf)(\xi,\eta)=\psi_j(\xi,\eta) \mathcal{F}_\kappa(f)(\xi,\eta),
	$$
	where $\psi_j(\xi,\eta):=\psi\left(2^{-j}\xi,2^{-j}\eta\right)$  and its supporst is the set  $\left\{\xi\in \mathbb{R}^d: 2^{j-1} \leq|(\xi,\eta) | \leq 2^{j+1}\right\}$.The Littlewood-Paley theorem  on mixed Lebesgue spaces $L^r_{{\K_1,x}}L^{\tilde{r}}_{\K_2,y}(\mathbb{R}^{d-k} \times \mathbb{R}^k) $; for $1<r, \tilde{r}<\infty$,
	is given by 	\begin{equation} \label{DLP}
	    \|f\|_{L^r_{{\K_1,x}}L^{\tilde{r}}_{\K_2,y} }\lesssim\left\| \left(\sum_{j \in \mathbb{Z}} \left|\mathcal{P}_j f\right|^2\right)^{1 / 2}\right\|_{L^r_{{\K_1,x}}L^{\tilde{r}}_{\K_2,y}} .
	\end{equation}
	The    Littlewood-Paley operators    $\mathcal{P}_j$ also satisfy the following important properties. 
	\begin{itemize}
		\item Identity relation: $f= \displaystyle \sum_{j \in \mathbb{Z}} \mathcal{P}_j f.$
		\item  Quasi-orthogonal relations: $\mathcal{P}_j \mathcal{P}_k=0$ if $|j-k|>1.$
		\item There exists a constant $C>0$ such that $$C^{-1} \sum_{j \in \mathbb{Z}}\left\| \mathcal{P}_j f\right\|_{L_\kappa^2\left(\mathbb{R}^{d}\right)}^2 \leq\|f\|_{L_\kappa^2\left(\mathbb{R}^{d}\right)}^2 \leq C \sum_{j \in \mathbb{Z}}\left\|\mathcal{P}_jf\right\|_{L_\kappa^2\left(\mathbb{R}^{d}\right)}^2.$$
		\item  Sobolev norm: $$\|f\|_{H_\kappa^s\left(\mathbb{R}^{d}\right)}\sim   \left\|(2^{js}\left\| \mathcal{P}_j f\right\|_{L_\kappa^2\left(\mathbb{R}^{d}\right)})\right \|_{\ell^2(\mathbb{Z})} .$$
		Moreover, there exists a constant $C>0$ such that
		$$C^{-1} \sum_{j \in \mathbb{Z}} 2^{2js}\left\| \mathcal{P}_jf\right\|_{L_\kappa^2\left(\mathbb{R}^{d}\right)}^2 \leq\|f\|_{H_\kappa^s\left(\mathbb{R}^{d}\right)}^2 \leq C \sum_{j \in \mathbb{Z}}2^{2js}\left\|\mathcal{P}_j f\right\|_{L_\kappa^2\left(\mathbb{R}^{d}\right)}^2.$$
	\end{itemize}
Also, in the Dunkl partial frame,  all the properties mentioned in Subsections \ref{Dunkl transform} and \ref{sub5.3} holds.

	\subsection{Strichartz estimates for higher order Dunkl-Sch\"rodinger semigroups}\label{sec6}
	Consider the nonlinear Schr\"odinger and wave equations
	\begin{equation}\label{DNLS}
		\begin{cases}
			i\partial_t u +(-\Delta_\kappa)^{\frac{m}{2}} u= F_p(u), \\
			u(0, x)=f(x),
		\end{cases}
	\end{equation}
	where $m>0,$ $(t, x) \in \mathbb{R} \times \mathbb{R}^{d}$ and the nonlinearity $F_p \in C^1$ with $p>1$ satisfies same condition as in \eqref{as1}.
If $u(t,x)$ is a solution of \eqref{DNLS}, then so is
	\begin{equation*}
		u_{\delta}(t, x)= \delta^{\frac{m}{p-1}} u(\delta^{m} t, \delta x), \quad \delta>0.
	\end{equation*}
	In addition, the Sobolev norm of the rescaled initial data $f_{\delta}(x)=u_{\delta}(0, x)$ is given in terms of the original $f$ as 
	\begin{equation}\label{scalingD}
		\| f_{\delta}\|_{\dot{H}_\kappa^s(\mathbb{R}^{d})} =\delta^{\frac{m}{p-1}+s-\frac{N}{2}} \|f\|_{ \dot{H}^s(\mathbb{R}^{d})}.
	\end{equation}	On the other hand 
	$$\| f_{\delta}\|_{L_{\kappa_1, x}^2 \dot{H}_{\kappa_2, y}^s} =\delta^{\frac{m}{p-1}+s-\frac{d+2(\gamma_1+\gamma_2)}{2}} \|f\|_{L_{\kappa_1, x}^2 \dot{H}_{\kappa_2, y}^s}=\delta^{\frac{m}{p-1}+s-\frac{N }{2}} \|f\|_{L_{\kappa_1, x}^2 \dot{H}_{\kappa_2, y}^s},$$  where $ \dot{H}_{\kappa_2, y}^s$ denotes the partial Dunkl-Sobolev space with respect to the $y$ variable. 
	Therefore, the same range of $p$ is guaranteed even when the regularity is imposed only partially, which naturally suggests the possibility of developing a new well-posedness theory under weaker regularity assumptions. The first step in this direction is to study Strichartz estimates for the free Dunkl–Schrödinger propagator with partially regular initial data. The following theorem, which constitutes the main result of this section, addresses this issue.
	\begin{theorem}\label{D Main Dunkl}
		Let $d\geq 1,$ $1 \leq k \leq d,$ $  2 \leq \widetilde{r} \leq r<\infty ,m\in (0, \infty)\backslash \{1\}$ and $ 2<q \leq \infty$. If $ q, r, \tilde{r}$ satisfy
		\begin{align}\label{Deq0}
			\frac{2}{q} \leq(d+2\gamma_1-k)\left(\frac{1}{2}-\frac{1}{r}\right)+(k+2\gamma_2)\left(\frac{1}{2}-\frac{1}{\widetilde{r}}\right). 
		\end{align}
		Then we have
		\begin{align}\label{Deq1}
			\left\|e^{i t(-\Delta_\kappa)^{\frac{m}{2}}} f\right\|_{L_t^q\left(\mathbb{R} ; L_{{\K_1,x}}^r\left(\mathbb{R}^{d-k} ; L_{{\K_2,y}}^{\tilde{r}}\left(\mathbb{R}^k\right)\right)\right)} \lesssim\|f\|_{\dot{H}_{\kappa}^s}^2,
		\end{align}
		under the scaling condition
		\begin{align}\label{Deq2}
			\frac{{{m}}}{q}=-s +(d+2\gamma_1-k)\left(\frac{1}{2}-\frac{1}{r}\right)+(k+2\gamma_2)\left(\frac{1}{2}-\frac{1}{\widetilde{r}}\right).
		\end{align}
	\end{theorem}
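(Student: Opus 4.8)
The plan is to run the Euclidean argument of Theorem~\ref{eq13} inside the Dunkl category attached to the product root system $\mathcal R=\mathcal R_1\times(0)_k\cup(0)_{d-k}\times\mathcal R_2$, the one genuinely new ingredient being the Dunkl stationary phase estimate of Lemma~\ref{fractionalinr}. First I would record the convolution structure: by the Dunkl inversion formula together with the factorizations $E_\kappa=E_{\kappa_1}E_{\kappa_2}$ and $h_\kappa=h_{\kappa_1}h_{\kappa_2}$, one has $e^{it(-\Delta_\kappa)^{m/2}}\mathcal P_0 f=K_t*_\kappa f$ where
\[
K_t(X)=\frac{1}{c_\kappa}\int_{\mathbb R^d}E_\kappa(iX,\Xi)\,e^{it|\Xi|^m}\,\psi(\Xi)\,h_\kappa(\Xi)\,d\Xi
\]
is a \emph{radial} function; hence Young's inequality for Dunkl convolution \eqref{Young} is available.

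The core step is the fixed-time dispersive bound, the Dunkl analogue of Lemma~\ref{eq6}, namely $\bigl\|e^{it(-\Delta_\kappa)^{m/2}}\mathcal P_0 f\bigr\|_{L^r_{\kappa_1,x}L^{\tilde r}_{\kappa_2,y}}\lesssim(1+|t|)^{-\beta(r,\tilde r)}\|f\|_{L^{r'}_{\kappa_1,x}L^{\tilde r'}_{\kappa_2,y}}$ with $\beta(r,\tilde r)=(d+2\gamma_1-k)(\tfrac12-\tfrac1r)+(k+2\gamma_2)(\tfrac12-\tfrac1{\tilde r})$, which I would obtain by Riesz--Thorin interpolation among three cases: (a) $r=\tilde r=2$, from Dunkl--Plancherel; (b) $r=\tilde r=\infty$, where $\|K_t\|_{L^\infty_\kappa}\lesssim(1+|t|)^{-N/2}$ follows from the \emph{second} estimate in Lemma~\ref{fractionalinr}, since the radial cutoff $\psi$ and the phase $\phi(\rho)=-\rho^m$ are admissible there --- both $\phi'$ and $\phi''$ being nonzero on $[\tfrac12,2]$ precisely because $m\ne1$; and (c) $r=\infty$, $\tilde r=2$, which I would reduce, via Minkowski's inequality and Dunkl--Plancherel in $y$, to a uniform-in-$\eta$ bound on the partial kernel $\widetilde K_t(x,\eta)=\mathcal F_{\kappa_2,y}\bigl(K_t(x,\cdot)\bigr)(\eta)$, an oscillatory Dunkl integral over $\mathbb R^{d-k}$ (root system $\mathcal R_1$, homogeneous dimension $N_1=d-k+2\gamma_1$) with frozen radial phase $\phi_\eta(\rho)=(\rho^2+|\eta|^2)^{m/2}$; applying Lemma~\ref{fractionalinr} in dimension $d-k$ --- after a harmless rescaling of the $\eta$-dependent cutoff and splitting off a negligible neighbourhood of the origin --- and using that $\phi_\eta'$ stays away from $0$ and $\phi_\eta''\ne0$ on the relevant $\rho$-range gives $\sup_{|\eta|\le2}|\widetilde K_t(x,\eta)|\lesssim(1+|(x,t)|)^{-N_1/2}$, whence Young's inequality in $x$ closes case (c) with $\beta(\infty,2)=N_1/2$. \textbf{This is the main obstacle}: in the Dunkl world there is no off-the-shelf stationary phase principle, so obtaining the partial-kernel decay uniformly over the frozen frequency $\eta$ is exactly what Lemma~\ref{fractionalinr} is designed for, and it is the only place where substantially new work over \cite{KLS23} is required.

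With the dispersive bound in hand, a routine $TT^*$ argument turns the frequency-localized Strichartz estimate $\|e^{it(-\Delta_\kappa)^{m/2}}\mathcal P_0 f\|_{L^q_tL^r_{\kappa_1,x}L^{\tilde r}_{\kappa_2,y}}\lesssim\|f\|_{L^2_\kappa}$ into a one-dimensional convolution inequality in $t$ with kernel $(1+|t|)^{-\beta(r,\tilde r)}$; under the admissibility condition \eqref{Deq0}, i.e.\ $2/q\le\beta(r,\tilde r)$, this is handled by Young's inequality when $2/q<\beta(r,\tilde r)$, by the one-dimensional Hardy--Littlewood--Sobolev inequality when $2/q=\beta(r,\tilde r)$ and $2<q<\infty$, and trivially by Plancherel when $q=\infty$ (which forces $r=\tilde r=2$), reproducing the analogue of Lemma~\ref{Loc}.

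Finally I would upgrade to general frequencies. Combining the Dunkl Littlewood--Paley square-function bound \eqref{DLP} with Minkowski's inequality and the commutation of $\mathcal P_j$ with the propagator gives
\[
\bigl\|e^{it(-\Delta_\kappa)^{m/2}}f\bigr\|_{L^q_tL^r_{\kappa_1,x}L^{\tilde r}_{\kappa_2,y}}^2\lesssim\sum_{j\in\mathbb Z}\bigl\|e^{it(-\Delta_\kappa)^{m/2}}\mathcal P_j f\bigr\|_{L^q_tL^r_{\kappa_1,x}L^{\tilde r}_{\kappa_2,y}}^2 .
\]
The change of variables $2^{-j}\Xi\mapsto\Xi$, the homogeneity $|\lambda\Xi|^m=\lambda^m|\Xi|^m$, and the scaling $d\mu_\kappa(2^{-j}\,\cdot)=2^{-jN}\,d\mu_\kappa(\cdot)$ of the Dunkl measure yield $e^{it(-\Delta_\kappa)^{m/2}}\mathcal P_j f(X)=e^{i2^{jm}t(-\Delta_\kappa)^{m/2}}\mathcal P_0 f_j(2^jX)$ with $f_j=f(2^{-j}\cdot)$; inserting this into the frequency-localized estimate, writing $\mathcal P_j f=\mathcal P_j\widetilde{\mathcal P}_j f$ and using quasi-orthogonality, the accumulated powers of $2^j$ collapse to $2^{2js}$ exactly by virtue of the scaling condition \eqref{Deq2}, so that each summand is $\lesssim 2^{2js}\|\widetilde{\mathcal P}_j f\|_{L^2_\kappa}^2$. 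Summing in $j$ and invoking the homogeneous Dunkl--Sobolev characterization of $\dot H^s_\kappa$ gives the desired bound, which completes the proof.
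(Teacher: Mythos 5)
Your proposal is correct and follows essentially the same route as the paper: the Dunkl stationary phase bound of Lemma \ref{fractionalinr} (via Corollary \ref{fractional}) feeding a fixed-time dispersive estimate proved by Riesz--Thorin interpolation among the three endpoint cases, then a $TT^*$ argument with Young/Hardy--Littlewood--Sobolev in $t$, and finally the Littlewood--Paley, rescaling, and quasi-orthogonality steps closed by the scaling condition \eqref{Deq2}. Your explicit remarks on the radiality of the kernel (needed for Dunkl--Young) and on handling the frozen-frequency phase $\phi_\eta(\rho)=(\rho^2+|\eta|^2)^{m/2}$ near $\rho=0$ are, if anything, slightly more careful than the paper's own write-up of case (c).
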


    Now, we obtain the following result, which can be applied in various contexts.

	\begin{lem}\label{general case}
		  Let  $\psi$ be any smooth function on $\mathbb{R}$ supported in $[\frac{1}{2},2]$ and let $\phi: \mathbb{R}^+ \to \mathbb{R}$ be a smooth function.
			\begin{itemize}
				\item If $\phi'(r)\neq0,$ $\forall r\in [\frac{1}{2},2]$, for $(x, t) \in \mathbb{R}^{d+1}$, we have
				$$
				\left|\int_{\mathbb{R}^{d}} E_{\kappa}(ix, \xi) e^{-i t\phi(|\xi|)} \psi(|\xi|) h_\kappa(\xi)  d \xi\right| \lesssim_{\phi,\psi}
				(1+|(x, t)|)^{-\frac{N-1}{2}}.
				$$
				\item If $\phi'(r),\phi''(r)\neq0,$ $\forall r\in [\frac{1}{2},2]$, for $(x, t) \in \mathbb{R}^{d+1}$, we have
				$$
				\left|\int_{\mathbb{R}^{d}} E_{\kappa}(ix, \xi) e^{-i t\phi(|\xi|)} \psi(|\xi|) h_\kappa(\xi)  d \xi\right| \lesssim_{\phi,\psi}
				(1+|(x, t)|)^{-\frac{N}{2}}.
				$$
			\end{itemize}
	\end{lem}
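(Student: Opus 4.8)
The plan is to collapse the $d$-dimensional Dunkl integral to a one-dimensional oscillatory integral and then apply non-stationary phase (and, for the sharper bound, van der Corput's lemma) to it. The reduction uses that the integrand is \emph{radial} in $\xi$: since $\psi(|\cdot|)$ and $e^{-it\phi(|\cdot|)}$ are radial, the Dunkl inversion formula together with the Hankel-type representation \eqref{radial-Dunkl} of the Dunkl transform of a radial function gives, up to a harmless constant,
\[
\int_{\mathbb{R}^{d}} E_{\kappa}(ix,\xi)\,e^{-it\phi(|\xi|)}\psi(|\xi|)\,h_\kappa(\xi)\,d\xi \;=\; \mathcal{I}(x,t)\;:=\;\int_0^\infty e^{-it\phi(r)}\psi(r)\,\frac{J_\nu(r|x|)}{(r|x|)^{\nu}}\,r^{N-1}\,dr,
\]
with $\nu=\tfrac{N}{2}-1$ and the $r$-integration supported in $[\tfrac12,2]$. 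Since $1+|(x,t)|\sim 1+|x|+|t|$, writing $s=|x|$ it suffices to prove $|\mathcal{I}(x,t)|\lesssim (1+s+|t|)^{-(N-1)/2}$, and $\lesssim(1+s+|t|)^{-N/2}$ under the extra assumption $\phi''\neq0$.

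First I would dispose of $s\le1$: there the amplitude $r\mapsto\psi(r)\frac{J_\nu(rs)}{(rs)^\nu}r^{N-1}$ is smooth on $[\tfrac12,2]$ with all $r$-derivatives bounded uniformly in $s\le1$ (because $\rho\mapsto J_\nu(\rho)/\rho^\nu$ is even and entire), so for $|t|\le1$ the bound is trivial and for $|t|>1$ one integrates by parts repeatedly in $r$, using $e^{-it\phi(r)}=\tfrac{1}{it\phi'(r)}\tfrac{d}{dr}e^{-it\phi(r)}$ — legitimate since $\phi'\neq0$ on $[\tfrac12,2]$ — to get $|\mathcal{I}(x,t)|\lesssim_k|t|^{-k}$ for every $k$. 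For $s\ge1$ I would insert the exact decomposition \eqref{Bessel-Fourier}, $\frac{J_\nu(rs)}{(rs)^\nu}=C\big(e^{irs}h(rs)+e^{-irs}\overline{h(rs)}\big)$, writing $\mathcal{I}(x,t)=C(\mathcal{I}_++\mathcal{I}_-)$ with $\mathcal{I}_\pm=\int_0^\infty e^{i\Phi_\pm(r)}a_\pm(r)\,dr$, $\Phi_\pm(r)=\pm rs-t\phi(r)$, $a_+(r)=\psi(r)h(rs)r^{N-1}$, $a_-=\overline{a_+}$; as in \eqref{h} and \eqref{big-s}, $|a_\pm^{(j)}(r)|\lesssim s^{-(N-1)/2}$ for every $j$ and every $s\ge1$, the powers of $s$ from differentiating $h(rs)$ being absorbed by the faster decay of $h'$.

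It then remains to bound $\mathcal{I}_\pm$ for $s\ge1$, which I would do by comparing $|t|$ with $s$ (take WLOG $0<c_1\le\phi'\le c_2$ on $[\tfrac12,2]$). If $|t|\le s/(2c_2)$ or $|t|\ge(2c_2+1)s$, then $|\Phi_\pm'|\gtrsim s+|t|$ uniformly on $[\tfrac12,2]$ and $k$ integrations by parts give $|\mathcal{I}_\pm|\lesssim_k (s+|t|)^{-k}s^{-(N-1)/2}$, beating both targets. In the middle range $s/(2c_2)\le|t|\le(2c_2+1)s$, where $s+|t|\sim s\sim|t|$, at most one of $\Phi_+,\Phi_-$ can have a critical point in $[\tfrac12,2]$ (the other has $|\Phi'|\gtrsim s+|t|$ and is handled by parts); for the possibly-critical one the trivial bound $|\mathcal{I}_\pm|\le\int_{1/2}^2|a_\pm|\lesssim s^{-(N-1)/2}\sim(s+|t|)^{-(N-1)/2}$ already gives the first estimate, while under $\phi''\neq0$ (so $|\Phi_\pm''|=|t\phi''|\gtrsim|t|\sim s+|t|$) a one-dimensional van der Corput estimate (cf.\ Lemma~\ref{eq8} with $d=1$, with constant controlled by $\|a_\pm\|_\infty+\|a_\pm'\|_{L^1}$) upgrades it to $|\mathcal{I}_\pm|\lesssim|t|^{-1/2}s^{-(N-1)/2}\sim(s+|t|)^{-N/2}$.

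The hard part will be exactly this middle regime $|x|\sim|t|$: there the one-dimensional phase $\pm r|x|-t\phi(r)$ genuinely has a stationary point, so non-stationary phase is unavailable, and one must squeeze the full decay $(r|x|)^{-(N-1)/2}$ out of the Bessel tail $h(r|x|)$ for the first bound, and gain the additional half power from one-dimensional stationary phase — precisely where the hypothesis $\phi''\neq0$ is used — for the sharp bound. A secondary point is making the integrations by parts uniform, i.e.\ checking that differentiating the amplitude $a_\pm(r)=\psi(r)h(r|x|)r^{N-1}$ does not spoil the $|x|^{-(N-1)/2}$ gain, the chain-rule factor $|x|$ being compensated by the improved decay of the derivatives of $h$ recorded in \eqref{h}.
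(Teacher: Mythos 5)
Your proposal is correct and follows essentially the same route as the paper: reduce to a one-dimensional Hankel-type integral via \eqref{radial-Dunkl}, split the Bessel factor as $e^{\pm ir|x|}h(r|x|)$ using \eqref{Bessel-Fourier} with the amplitude bounds \eqref{h}--\eqref{big-s}, apply non-stationary phase when $|x|$ and $|t|$ are not comparable, and in the resonant regime $|x|\sim|t|$ use the $|x|^{-(N-1)/2}$ amplitude decay for the first bound and one-dimensional van der Corput (using $\phi''\neq 0$) for the second — the only difference being that you carry out the resonant-regime estimate directly where the paper cites the proof of Theorem 1.1 in \cite{LMS}. The sole (cosmetic) repair needed is your threshold for the large-$|t|$ non-stationary regime, which should be $|t|\geq 2s/\min_{[1/2,2]}|\phi'|$ rather than $(2c_2+1)s$ so that $|\Phi_\pm'|\gtrsim s+|t|$ actually holds there.
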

	\begin{proof}
		Let us denote
		\begin{align*}
			I_\kappa(t, x):&=\int_{\mathbb{R}^{d}} E_{\kappa}(ix, \xi) e^{-i t\phi(|\xi|)} \psi(|\xi|) h_\kappa(\xi)  d \xi.
		\end{align*}
		Then it is easy to see that
		\begin{equation*}
			\left|I_\kappa(t, x)\right|\leq \int_{\mathbb{R}^{d}} |\psi(|\xi|)| h_\kappa(\xi)  d \xi=\|\psi\|_{L^1_\kappa(\mathbb{R}^{d})}\lesssim_\psi 1.
		\end{equation*}
		Hence, it suffices to prove the following statement: 
        \begin{itemize}
        \item If $\phi'(r)\neq0$ for all $r\in [\frac{1}{2},2]$, we have
		\begin{equation}\label{N-1}
			\left|I_\kappa(t, x)\right|\lesssim_{\phi,\psi} |(x, t)|^{-\frac{N-1}{2}};	\end{equation}
            \item In addition, if $\phi''(r)\neq0$ for all $r\in [\frac{1}{2},2]$, we have
		\begin{equation}\label{N}
			\left|I_\kappa(t, x)\right|\lesssim_{\phi,\psi} |(x, t)|^{-\frac{N}{2}}.
		\end{equation}
        \end{itemize} 
Now from \eqref{radial-Dunkl} and \eqref{Bessel-Fourier}, we see that 
		\begin{align*}
			I_\kappa(t, x)&=\frac{1}{\Gamma(N/2)}\int_0^\infty e^{-it\phi(r)}\psi(r)\frac{J_\frac{N-2}{2}(r|x|)}{\left(r|x|\right)^\frac{N-2}{2}}r^{N-1} dr\\
			&= C \int_0^\infty \left(e^{i(-t\phi(r)+r|x|)}h(r|x|)+e^{i(-t\phi(r)-r|x|)}\overline{h(r|x|)}\right)\psi(r)r^{N-1} dr\\
			&= C \sum_{\pm}\int_0^\infty e^{i\left(\pm r|x|-t\phi(r)\right)}\psi(r)H(r|x|)r^{N-1}dr.
		\end{align*}
		where $H=h$ or $\overline{h}$. We divide our discussion in two cases.
		\begin{itemize}
			\item \textbf{Case 1:} $|x|\geq 2\max\limits_{r\in[1/2,2]}|\phi'(r)||t|$ or $|x|\leq \frac{1}{2}\min\limits_{r\in[1/2,2]}|\phi'(r)||t|$. In this case, we have
			\begin{equation*}
				\left|\frac{d}{dr}\left(\pm r|x|-t\phi(r)\right)\right|=\left|\pm |x|-t\phi'(r)\right|\gtrsim_{\phi} |(x,t)|.
			\end{equation*}
			For any $M\in\mathbb{N}$, by repeated integration by parts $M$-times and \eqref{big-s}, we obtain 
			\begin{equation*}
				\left|\int_0^\infty e^{i\left(\pm r|x|-t\phi(r)\right)}\psi(r)H(r|x|)r^{N-1}dr\right|\lesssim_{\phi,\psi,M} \frac{1}{|(x,t)|^M}.
			\end{equation*}
			Therefore
			\begin{equation}\label{N-1 1}
				\left|I_\kappa(t, x)\right|\lesssim_{\phi,\psi,M} \frac{1}{|(x,t)|^M}.
			\end{equation}
			\item \textbf{Case 2:}  $\frac{1}{2}\min\limits_{r\in[1/2,2]}|\phi'(r)||t|\leq |x|\leq 2\max\limits_{r\in[1/2,2]}|\phi'(r)||t|$, i.e., $|x|\sim|t|$. We see that 
			\begin{equation*}
				I_\kappa(t, x)=\mathcal{F}^{-1}_\kappa (e^{-it\phi} \psi(|\cdot|))(x).
			\end{equation*}
			From the proof of Theorem 1.1 in \cite{LMS}, we can get
			\begin{equation}\label{N-1 2}
				\left|I_\kappa(t, x)\right|=\|\mathcal{F}^{-1}_\kappa (e^{-it\phi} \psi(|\cdot|))\|_{L_\kappa^\infty(\mathbb{R}^{d})} \lesssim_{\psi} \frac{1}{|t|^\frac{N-1}{2}}\lesssim_{\psi}  \frac{1}{|(x,t)|^\frac{N-1}{2}}.
			\end{equation}
			
		\end{itemize}
        Finally, combining \eqref{N-1 1} and \eqref{N-1 2}, we prove \eqref{N-1}. Moreover, if $\phi''(r)\neq0$ for all $r\in [\frac{1}{2},2]$, from the proof of Theorem 1.1 in \cite{LMS}, using Van der Corput lemma, when $\frac{1}{2}\min\limits_{r\in[1/2,2]}|\phi'(r)||t|\leq |x|\leq 2\max\limits_{r\in[1/2,2]}|\phi'(r)||t|$, we have
		\begin{equation}\label{N 2}
			\left|I_\kappa(t, x)\right|=\|\mathcal{F}^{-1}_\kappa (e^{-it\phi} \psi(|\cdot|))\|_{L_\kappa^\infty(\mathbb{R}^{d})} \lesssim_{\phi,\psi} \frac{1}{|t|^\frac{N}{2}}\lesssim_{\phi,\psi}  \frac{1}{|(x,t)|^\frac{N}{2}}.
		\end{equation}
		Combining \eqref{N-1 1} and \eqref{N 2}, we prove \eqref{N}  and this completes the proof of the lemma.
	\end{proof}

As an consequence of above result we get the following lemma, which will serve as a useful tool in estimating the certain integrals in our main result.

\begin{cor}[Fractional Schr\"odinger case]\label{fractional}
			Assume $\alpha>0$. Let  $\psi$ be any smooth function on $\mathbb{R}$ supported in $[\frac{1}{2},2]$. Then, for $(x, t) \in \mathbb{R}^{d+1}$
			$$
			\left|\int_{\mathbb{R}^{d}} E_{\kappa}(ix, \xi) e^{-i t|\xi|^\alpha} \psi(|\xi|) h_\kappa(\xi)  d \xi\right| \lesssim_\psi\begin{cases}(1+|(x, t)|)^{-\frac{N}{2}},\alpha\neq1,\\
				(1+|(x, t)|)^{-\frac{N-1}{2}},\alpha=1.
			\end{cases}
			$$
	\end{cor}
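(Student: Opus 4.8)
The plan is to derive Corollary \ref{fractional} directly from Lemma \ref{general case} by taking $\phi(r)=r^\alpha$ and checking the hypotheses on the relevant interval. First I would compute $\phi'(r)=\alpha r^{\alpha-1}$ and $\phi''(r)=\alpha(\alpha-1)r^{\alpha-2}$. For $r\in[\tfrac12,2]$ and $\alpha>0$ we clearly have $\phi'(r)=\alpha r^{\alpha-1}>0$, so the first bullet of Lemma \ref{general case} applies unconditionally and yields
\begin{equation*}
\left|\int_{\mathbb{R}^{d}} E_{\kappa}(ix, \xi) e^{-i t|\xi|^\alpha} \psi(|\xi|) h_\kappa(\xi)\,d\xi\right| \lesssim_{\psi} (1+|(x,t)|)^{-\frac{N-1}{2}}
\end{equation*}
for every $\alpha>0$; in particular this settles the case $\alpha=1$.

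Next I would improve the decay when $\alpha\neq 1$. In that case $\phi''(r)=\alpha(\alpha-1)r^{\alpha-2}\neq 0$ for all $r\in[\tfrac12,2]$, since $\alpha>0$, $\alpha\neq1$ forces $\alpha(\alpha-1)\neq 0$, and $r^{\alpha-2}>0$. Hence both $\phi'(r)\neq 0$ and $\phi''(r)\neq 0$ on $[\tfrac12,2]$, so the second bullet of Lemma \ref{general case} is applicable and gives the sharper bound $(1+|(x,t)|)^{-N/2}$. Combining the two cases produces exactly the piecewise estimate claimed in Corollary \ref{fractional}, so no further work is needed.

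There is essentially no obstacle here: the corollary is a clean specialization, and the only thing to be careful about is the implicit dependence of the constants on $\alpha$. Since $\phi$ and its derivatives, together with the lower bounds $|\phi'(r)|\gtrsim_\alpha 1$ and $|\phi''(r)|\gtrsim_\alpha 1$ on the compact interval $[\tfrac12,2]$, depend only on $\alpha$ (which is fixed), the constant in Lemma \ref{general case} depends on $\phi$ and $\psi$ only, and hence on $\alpha$ and $\psi$; suppressing the fixed $\alpha$, we may write $\lesssim_\psi$ as in the statement. If one wished to track uniformity in $\alpha$ near $1$ this would require a small additional remark, but that is not needed for the applications in the next subsection. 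I would therefore present the proof in two short paragraphs mirroring the two bullets of Lemma \ref{general case}, and conclude.
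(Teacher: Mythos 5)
Your proposal is correct and follows exactly the route the paper intends: the corollary is stated as an immediate specialization of Lemma \ref{general case} with $\phi(r)=r^\alpha$, where $\phi'(r)=\alpha r^{\alpha-1}\neq 0$ on $[\tfrac12,2]$ for all $\alpha>0$ gives the $(N-1)/2$ decay, and the additional nonvanishing of $\phi''(r)=\alpha(\alpha-1)r^{\alpha-2}$ for $\alpha\neq 1$ upgrades it to $N/2$. Your remark on absorbing the $\alpha$-dependence of the constant into the suppressed notation is a reasonable clarification of a point the paper leaves implicit.
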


 To establish the above result, we first prove the following lemmas, which are crucial for extending localized frequency estimates to the global setting.
 \begin{lem}\label{Deq6}
		Let $d \geq 1 $, $1 \leq k \leq d$ and $m\in (0, \infty)\backslash \{1\}$.  Assume that $ \leq \widetilde{r} \leq r \leq \infty$. Then for $(x, y) \in \mathbb{R}^{d-k} \times \mathbb{R}^k$ we have
		\begin{align}\label{Deq5}
			\left\|e^{i t(-\Delta_\kappa)^{\frac{m}{2}}} \mathcal{P}_0 f\right\|_{L_{\kappa_1, x}^r L_{\kappa_2, y}^{\tilde{r}}} \lesssim(1+|t|)^{-\beta(r, \widetilde{r})}\|f\|_{L_{\kappa_1, x}^{r^{\prime}} L_{\kappa_2, y}^{\tilde{r}^{\prime}}}
		\end{align}
		where $$\beta(r, \widetilde{r})= (d+2\gamma_1-k) \Big(\frac{1}{2}-\frac{1}{r}\Big) + (k+2\gamma_2)\Big(\frac{1}{2}-\frac{1}{\widetilde{r}}\Big).$$
	\end{lem}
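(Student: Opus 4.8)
I would transplant into the Dunkl setting the three–case Riesz--Thorin scheme behind the Euclidean estimate of Lemma~\ref{eq6}. Using $\mathcal{F}_\kappa((-\Delta_\kappa)^{m/2}g)=|\cdot|^m\,\mathcal{F}_\kappa g$ and $\mathcal{F}_\kappa(f*_\kappa g)=\mathcal{F}_\kappa f\cdot\mathcal{F}_\kappa g$, write $e^{it(-\Delta_\kappa)^{m/2}}\mathcal{P}_0 f=f*_\kappa K_t$ with $K_t:=\mathcal{F}_\kappa^{-1}(e^{it|\cdot|^m}\psi(|\cdot|))$. The symbol $e^{it|\cdot|^m}\psi(|\cdot|)$ is radial, so $K_t$ is a radial function on $\mathbb{R}^d$ (and the partial kernel below will be radial in its spatial variable); this is essential, since Young's inequality \eqref{Young} for the Dunkl convolution is available only when one factor is radial. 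Since $\beta(r,\widetilde{r})$ is affine and nonnegative in $(1/r,1/\widetilde{r})$ on the admissible range, it suffices to prove \eqref{Deq5} at the three endpoints $(r,\widetilde{r})=(2,2),\ (\infty,\infty),\ (\infty,2)$ and interpolate.

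The endpoint $(2,2)$ is Plancherel for the Dunkl transform together with $|\psi|\le 1$ and $\beta(2,2)=0$. At $(\infty,\infty)$, Young's inequality with the radial kernel gives $\|f*_\kappa K_t\|_{L^\infty_\kappa}\le\|K_t\|_{L^\infty_\kappa}\|f\|_{L^1_\kappa}$, and Corollary~\ref{fractional} with $\alpha=m\neq 1$ gives $\|K_t\|_{L^\infty_\kappa}=\sup_x|K_t(x)|\lesssim (1+|(x,t)|)^{-N/2}\le (1+|t|)^{-N/2}$; since $\beta(\infty,\infty)=(d+2\gamma_1+2\gamma_2)/2=N/2$, this endpoint is done.

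The mixed endpoint $(\infty,2)$ is where the product structure enters. Applying the partial Dunkl transform $\mathcal{F}_{\kappa_2,y}$ and using $E_\kappa((x,y),(\xi,\eta))=E_{\kappa_1}(x,\xi)E_{\kappa_2}(y,\eta)$, one gets $\mathcal{F}_{\kappa_2,y}(f*_\kappa K_t)(x,\eta)=\widetilde{K}_t(\cdot,\eta,t)*_{\kappa_1,x}\widetilde{f}(\cdot,\eta)(x)$, where $\widetilde{f}=\mathcal{F}_{\kappa_2,y}f$ and $\widetilde{K}_t(x,\eta,t)=\mathcal{F}_{\kappa_1}^{-1}(e^{it|(\cdot,\eta)|^m}\psi(|(\cdot,\eta)|))(x)$; for fixed $\eta$ the amplitude is radial in $\xi$, so $\widetilde{K}_t(\cdot,\eta,t)$ is radial in $x$. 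Combining partial Plancherel in $y$, Minkowski's and Hölder's inequalities in the $x'$–integral, and the $L^\infty$–contractivity of the Dunkl translation $\tau_x^{\kappa_1}$ on radial functions, the estimate reduces to the pointwise bound $\sup_{|\eta|\le 2}\sup_{x\in\mathbb{R}^{d-k}}|\widetilde{K}_t(x,\eta,t)|\lesssim (1+|t|)^{-N_1/2}$, where $N_1:=d+2\gamma_1-k$ is the homogeneous dimension of the $x$–space; this exponent is exactly $\beta(\infty,2)$, and feeding it back yields $\|f*_\kappa K_t\|_{L^\infty_{\kappa_1,x}L^2_{\kappa_2,y}}\lesssim (1+|t|)^{-N_1/2}\|f\|_{L^1_{\kappa_1,x}L^2_{\kappa_2,y}}$.

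\textbf{Main obstacle.} The hard part will be this frozen–frequency kernel bound. By the radial Dunkl inversion formula on $\mathbb{R}^{d-k}$ (the analogue of \eqref{radial-Dunkl} with $N$ replaced by $N_1$) and the Bessel asymptotics \eqref{Bessel-Fourier}, $\widetilde{K}_t(\cdot,\eta,t)$ is a Hankel–type integral with radial phase $\phi_\eta(\rho)=(\rho^2+|\eta|^2)^{m/2}$ and smooth, compactly supported radial amplitude. After a partition of unity in $|\xi|$ separating the region near the origin, I would apply the $\mathbb{R}^{d-k}$–version of Lemma~\ref{general case} on the pieces supported away from $\xi=0$ and the standard non-degenerate stationary–phase estimate on the remaining piece near $\xi=0$ (a non-degenerate critical point of the radial phase, as $\phi_\eta''(0)=m|\eta|^{m-2}>0$), with constants uniform over the compact family $\{\phi_\eta:|\eta|\le 2\}$. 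A direct computation gives $\phi_\eta'(\rho)=m\rho(\rho^2+|\eta|^2)^{m/2-1}$ and $\phi_\eta''(\rho)=m(\rho^2+|\eta|^2)^{m/2-2}[(m-1)\rho^2+|\eta|^2]$. For $m>1$ the bracket is strictly positive on the support $\{\rho^2+|\eta|^2\in[1/4,4]\}$, so Lemma~\ref{general case} delivers the sharp $(1+|(x,t)|)^{-N_1/2}$ decay at once; this is precisely where $m\neq 1$ is used, excluding the degenerate regime $\phi_\eta''\equiv 0$ (whose flat counterpart is the familiar half–derivative loss in the wave estimate). When $0<m<1$, $\phi_\eta''$ vanishes at the single point $\rho_0=|\eta|/\sqrt{1-m}$ of the support, where $\phi_\eta'\neq 0$; the contribution of a shrinking neighbourhood of $\rho_0$ must be controlled by the finer one–dimensional van der Corput bookkeeping from the proof of \cite{LMS} (which already treats fractional exponents), and this is the genuinely delicate step. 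Once the frozen–frequency bound is in hand, \eqref{Deq5}, hence Lemma~\ref{Deq6}, follows by Riesz--Thorin interpolation exactly as for Lemma~\ref{eq6}.
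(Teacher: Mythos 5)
Your proposal follows the paper's own proof essentially step for step: the same three Riesz--Thorin endpoints, Plancherel at $(2,2)$, Dunkl convolution plus Young's inequality and Corollary~\ref{fractional} at $(\infty,\infty)$, and the partial Dunkl transform in $y$ with Plancherel, Minkowski, and a frozen-$\eta$ kernel bound at $(\infty,2)$. The only place you depart from (and improve on) the paper is the frozen-frequency step: the paper simply cites Corollary~\ref{fractional} for $\widetilde K(\cdot,\eta,t)$, even though its phase $(|\xi|^2+|\eta|^2)^{m/2}$ is not of the form $|\xi|^\alpha$ and its $\xi$-support can reach the origin where $\phi_\eta'$ vanishes, whereas your computation of $\phi_\eta'$ and $\phi_\eta''$, the partition of unity near $\xi=0$, and the flagged degeneracy of $\phi_\eta''$ for $0<m<1$ correctly pinpoint what must actually be verified (via Lemma~\ref{general case} and a van der Corput argument) to justify the decay rate $\frac{d+2\gamma_1-k}{2}$ uniformly in $|\eta|\le 2$.
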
 
	\begin{proof}
		By the Riesz-Thorin interpolation theorem, we only need to obtain (\ref{Deq5}) for the  three cases:
		(a) $r=\widetilde{r}=2$,
		(b) $r=\widetilde{r}=\infty$,
		(c) $r=\infty$ and $\tilde{r}=2$.

		The case (a) directly follows from the Plancherel theorem. For the case (b), i.e., when  $r=\widetilde{r}=\infty$,  considering the Dunkl transform and its inversion formula, we can write
		$$
		\begin{aligned}
			e^{-i t \Delta_\kappa} P_0 f(x, y) 
			& =K*_{\kappa} f(x, y),
		\end{aligned}
		$$
		where the kernel $K$ is given by 
		\begin{align}\label{Deq9}
			K(x, y, t)=\frac{1}{{c_{\K_1}c_{\K_2}}} \int_{\mathbb{R}^{d-k}} \int_{\mathbb{R}^k} E_{\kappa}(i(x,y), (\xi, \eta))e^{ it (|\xi|^2+ |\eta|^2)^{\frac{m}{2}}} \psi(\xi, \eta)h_{\kappa_1}(\xi) h_{\kappa_2}(\eta) d \eta d \xi.
		\end{align}
		Using  Young's inequality, we get
		\begin{align}\label{DKernel estimate 1}
			\left\|e^{-i t\Delta_\kappa} \mathcal{P}_0f\right\|_{L_{\kappa}^{\infty}}=\left\|K*_{\kappa} f\right\|_{L_{\kappa}^{\infty}} \leq\left\|K\right\|_{L_{\kappa}^{\infty}}\|f\|_{L_{\kappa}^1} .
		\end{align}
		Now, our aim is to estimate the kernel $K$.  By  Corollary \ref{fractional},    (\ref{Deq9})  can be estimated in the following way:
		\begin{align}\label{oscillatory}\nonumber
			\left|K(x, y, t)\right| & =\frac{1}{{ c_{\K_1}c_{\K_2}}} \left|\int_{\mathbb{R}^{d-k}} \int_{\mathbb{R}^k} E_{\kappa}(i(x,y), (\xi, \eta))e^{i t (|\xi|^2+ |\eta|^2)^{\frac{m}{2}}} \psi(\xi, \eta)h_{\kappa_1}(\xi) h_{\kappa_2}(\eta)  d \eta d \xi \right| \\\nonumber
			& \lesssim(1+|(x, y, t)|)^{-\frac{N}{2}} \\
			& \lesssim(1+|t|)^{-\frac{N}{2}}.
		\end{align}
		Then from the inequality \eqref{DKernel estimate 1}, we get 
		$$		\left\|e^{i t(-\Delta_\kappa)^{\frac{m}{2}}} \mathcal{P}_0 f\right\|_{L_{\kappa}^{\infty}}=\left\|K*_{\kappa} f\right\|_{L_{\kappa}^{\infty}}   \lesssim(1+|t|)^{-\beta(\infty, \infty)} \|f\|_{L_{\kappa}^1} $$
		with $ \beta(\infty, \infty)=N/ 2 \geq 0$ and   this concludes part (b). 
		
		For the case $(c)$ i.e., when $r=\infty$ and $\tilde{r}=2$,  using Minkowski's inequality and Plancherel's theorem with respect to $y$ variable, we get 
		\begin{align}\label{DKernel 2}\nonumber
			\left\|e^{i t(-\Delta_\kappa)^{\frac{m}{2}}}\mathcal{P}_0 f\right\|_{L_{\kappa_1, x}^{\infty} L_{\kappa_2, y}^2} 
			& =\left\|\| K*_{\kappa_2} f \|_{L_{\kappa_2, y}^2}\right\|_{L_{\kappa_1, x}^{\infty}} \\\nonumber
			& \leq\left\|\int_{\mathbb{R}^{d-k}} \| K\left(x-x^{\prime}, \cdot\right) *_{\kappa_2, y} f\left(x^{\prime}, \cdot\right) \|_{L_{\kappa_2, y}^2} h_{\kappa_1}(x') d x^{\prime}\right\|_{L_{\kappa_1, x}^{\infty}} \\
			& =\left\|\int_{\mathbb{R}^{d-k}} \| \widetilde{K}\left(x-x^{\prime}, \cdot\right) \tilde{f}\left(x^{\prime}, \cdot\right) \|_{L_{\kappa_2, \eta}^2} h_{\kappa_1}(x')  d x^{\prime}\right\|_{L_{\kappa_1, x}^{\infty}},
		\end{align}
		where  $\tilde{f}=\mathcal{F}_{{ \K_2,y}}(f(x, \cdot))$ denotes the  Dunkl transform of $f$ with respect to  the   $y \in \mathbb{R}^k$ variable and similarly $\tilde{K}=\mathcal{F}_{{\K_2,y}}(K(x, \cdot))$, which is given by 
		$$
		\widetilde{K}(x, \eta, t)=\frac{1}{{c_{\K_1}}} \int_{\mathbb{R}^{d-k}}  E_{\kappa}(i (x, y), (\xi, \eta))e^{i t (|\xi|^2+ |\eta|^2)^{\frac{m}{2}}} \psi(\xi, \eta)h_{\kappa_1}(\xi)    d \xi.
		$$
Again,  applying Corollary \ref{fractional}  for fixed $|\eta| \leq 2$, we  get
		\begin{align}\label{Deq7}
			\sup _{|\eta| \leq 2}\left|\widetilde{K}(x, \eta, t)\right| \lesssim(1+|(x, t)|)^{-\frac{d+2\gamma_1-k}{2}},
		\end{align}
        where $(d-k)+2\gamma_1$ is the homogeneous dimension of $\mathbb{R}^{d-k}.$ 
        
		Using  the estimate  \eqref{Deq7} and   Young's inequality, from \eqref{DKernel 2},  we obtain 
		$$
		\begin{aligned}
			\left\|e^{i t(-\Delta_\kappa)^{\frac{m}{2}}}\mathcal{P}_0 f\right\|_{L_{\kappa_1, x}^{\infty} L_{\kappa_2, y}^2} 
			&\leq	\left\|\int_{\mathbb{R}^{d-k}} \| \widetilde{K}\left(x-x^{\prime}, \cdot\right) \tilde{f}\left(x^{\prime}, \cdot\right) \|_{L_{\kappa_2, \eta}^2}h_{\kappa_1}(x')   d x^{\prime}\right\|_{L_{\kappa_1, x}^{\infty}}\\
			&	\lesssim 	\left\|\int_{\mathbb{R}^{d-k}}  (1+|(x-x', t)|)^{-\frac{d+2\gamma_1-k}{2}}\|   \tilde{f}\left(x^{\prime}, \cdot\right) \|_{L_{\kappa_2, \eta}^2} h_{\kappa_1}(x')   d x^{\prime}\right\|_{L_{\kappa_1, x}^{\infty}}\\ & \lesssim\left\|(1+|(\cdot, t)|)^{-\frac{d+2\gamma_1-k}{2}} *_{\kappa,x} \| \tilde{f} \|_{L_{\kappa_2, \eta}^2}\right\|_{L_{\kappa_1, x}^{\infty}} \\
			& \lesssim(1+|t|)^{-\frac{d+2\gamma_1-k}{2}}\left \| \| \tilde{f} \|_{L_{\kappa_2, \eta}^2}\right\|_{L_{\kappa_1, x}^1} \\
			& \leq(1+|t|)^{-\frac{d+2\gamma_1-k}{2}}\|f\|_{L_{\kappa_1, x}^1 L_{\kappa_2, y}^2}\\
			& =(1+|t|)^{-\beta(\infty, 2)}\|f\|_{L_{\kappa_1, x}^1 L_{\kappa_2, y}^2} 
		\end{aligned}
		$$
		with $ \beta(\infty, 2)=\frac{d+2\gamma_1-k}{2} \geq 0$. This completes the proof of the lemma.  
	\end{proof}
 	\begin{lem} 	Let $d \geq 1,1 \leq k \leq d$,    $m\in (0, \infty)\backslash \{1\}$, and 
		\begin{align*}
			\frac{2}{q} \leq(d+2\gamma_1-k)\left(\frac{1}{2}-\frac{1}{r}\right)+(k+2\gamma_2)\left(\frac{1}{2}-\frac{1}{\widetilde{r}}\right), \quad 2 \leq \widetilde{r} \leq r<\infty, \quad 2\leq q \leq \infty, 
		\end{align*}
		then  
		\begin{align}\label{Deq4}
			\left\|e^{i t(-\Delta_\kappa)^{\frac{m}{2}} } \mathcal{P}_0 f\right\|_{L_t^q L_{\kappa_1, x}^r L_{\kappa_2, y}^{\tilde{\tau}}} \lesssim\|f\|_{L_{\kappa}^2}.
		\end{align}

	\end{lem}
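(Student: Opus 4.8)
The plan is to transcribe the proof of Lemma~\ref{Loc} into the Dunkl setting, the only genuinely new ingredient being the fixed-time decay estimate \eqref{Deq5} already established in Lemma~\ref{Deq6}. I would first record two structural facts that make the argument work. By Plancherel for the Dunkl transform, $e^{it(-\Delta_\kappa)^{m/2}}$ is a strongly continuous unitary group on $L^2_\kappa$, obeying the group law $e^{it(-\Delta_\kappa)^{m/2}}e^{is(-\Delta_\kappa)^{m/2}}=e^{i(t+s)(-\Delta_\kappa)^{m/2}}$; and, being Dunkl--Fourier multipliers, $\mathcal{P}_0$ and $e^{it(-\Delta_\kappa)^{m/2}}$ commute. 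Granting these, the standard $TT^*$ argument reduces \eqref{Deq4} to the equivalent estimate
\begin{equation*}
\Big\|\int_{\mathbb{R}} e^{i(t-\tau)(-\Delta_\kappa)^{m/2}}\mathcal{P}_0 g(\tau,\cdot)\,d\tau\Big\|_{L_t^q L_{\kappa_1,x}^r L_{\kappa_2,y}^{\widetilde r}}\lesssim\|g\|_{L_t^{q'} L_{\kappa_1,x}^{r'} L_{\kappa_2,y}^{\widetilde r'}},
\end{equation*}
which is what I would prove.

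Next, applying \eqref{Deq5} pointwise in $\tau$ inside the integral, together with Minkowski's inequality in the spatial variables, bounds the left-hand side by
\begin{equation*}
\big\|(1+|\cdot|)^{-\beta(r,\widetilde r)}*_t\|g\|_{L_{\kappa_1,x}^{r'}L_{\kappa_2,y}^{\widetilde r'}}\big\|_{L_t^q},\qquad \beta(r,\widetilde r)=(d+2\gamma_1-k)\Big(\tfrac12-\tfrac1r\Big)+(k+2\gamma_2)\Big(\tfrac12-\tfrac1{\widetilde r}\Big),
\end{equation*}
where the convolution $*_t$ is the ordinary one on $\mathbb{R}$. The admissibility hypothesis is exactly $\tfrac2q\le\beta(r,\widetilde r)$, so it remains to bound this time-convolution.

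Then I would run the same three-case analysis as in Lemma~\ref{Loc}. If $\tfrac2q<\beta(r,\widetilde r)$ with $2\le q\le\infty$, Young's inequality (with $1+\tfrac1q=\tfrac2q+\tfrac1{q'}$) reduces matters to $(1+|\cdot|)^{-\beta(r,\widetilde r)}\in L^{q/2}_t$, which holds since $\tfrac{q}{2}\beta(r,\widetilde r)>1$. If $\tfrac2q=\beta(r,\widetilde r)$ with $2<q<\infty$, I would dominate $(1+|\cdot|)^{-\beta(r,\widetilde r)}$ by $|\cdot|^{-\beta(r,\widetilde r)}$ and apply the one-dimensional Hardy--Littlewood--Sobolev inequality, for which the exponent identity $\beta(r,\widetilde r)=\tfrac2q$ and the range restriction $q'<q$ are precisely met. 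If $q=\infty$ and $\beta(r,\widetilde r)=0$, then necessarily $r=\widetilde r=2$ (with the $x$-factor absent when $d+2\gamma_1-k=0$), and the estimate follows from Plancherel. Assembling the three cases yields \eqref{Deq4}.

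I do not expect any essential obstacle here: all of the analysis after invoking \eqref{Deq5} takes place on the time axis $\mathbb{R}$ with Lebesgue measure, so the non-positivity of Dunkl translation and the (generally open) $L^p_\kappa$-boundedness of $\tau_x$ never intervene. The only place where the Dunkl structure enters is the dispersive bound \eqref{Deq5}, whose decay rate $\beta(r,\widetilde r)$ already encodes the homogeneous dimensions $d+2\gamma_1-k$ and $k+2\gamma_2$ in the correct slots; consequently the endpoint arithmetic is identical to the Euclidean case of Lemma~\ref{Loc}.
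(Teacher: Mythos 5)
Your proposal is correct and follows essentially the same route as the paper: a $TT^*$ reduction, insertion of the fixed-time decay bound \eqref{Deq5}, and the three-case analysis (Young, one-dimensional Hardy--Littlewood--Sobolev, and Plancherel at the trivial endpoint). Your identification of the Young exponent as $L^{q/2}_t$ with the integrability condition $\tfrac{q}{2}\beta(r,\widetilde r)>1$ is in fact slightly more careful than the paper's write-up, which records that norm as $L^q_t$.
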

	\begin{proof}
		By   the standard $T T^*$ argument,  the required estimate (\ref{Deq4})  is equivalent to
		$$
		\left\|\int_{\mathbb{R}} e^{i(t-\tau)(-\Delta_\kappa)^{\frac{m}{2}}} \mathcal{P}_0  g d \tau\right\|_{L_t^q L_{\kappa_1, x}^r L_{\kappa_2, y}^{\tilde{r}}} \lesssim\|g\|_{L_t^{q^{\prime}} L_{\kappa_1, x}^{r^{\prime}} L_{\kappa_2, y}^{\tilde{r}^{\prime}}}.
		$$
		Now  by  the estimate \eqref{Deq5} in Lemma \ref{Deq6}, we have
		$$
		\begin{aligned}
			\left\|\int_{\mathbb{R}} e^{i(t-\tau)(-\Delta_\kappa)^{\frac{m}{2}}} \mathcal{P}_0 g d \tau\right\|_{L_t^q L_{\kappa_1, x}^r L_{\kappa_2, y}^{\tilde{r}}} & \leq\left\|\int_{\mathbb{R}} \| e^{i(t-\tau)(-\Delta_\kappa)^{\frac{m}{2}}} \mathcal{P}_0 g \|_{L_{\kappa_1, x}^r L_{\kappa_2, y}^{\tilde{r}}} d \tau\right\|_{L_t^q} \\
			& \lesssim\left\|\int_{\mathbb{R}}  (1+|t-\tau|)^{-\beta(r, \widetilde{r})}\|g\|_{L_{\kappa_1, x}^{r^{\prime}} L_{\kappa_2, y}^{\tilde{r}^{\prime}}} d \tau\right\|_{L_t^q} \\
			& \lesssim\left\|(1+|\cdot|)^{-\beta(r, \widetilde{r})} *_t \| g \|_{L_{\kappa_1, x}^{r^{\prime}} L_{\kappa_2, y}^{\tilde{r}^{\prime}}}\right\|_{L_t^q}.
		\end{aligned}
		$$
		From the notation, $	\frac{2}{q} \leq(d+2\gamma_1-k)\left(\frac{1}{2}-\frac{1}{r}\right)+(k+2\gamma_2)\left(\frac{1}{2}-\frac{1}{\widetilde{r}}\right)$  reduces to   $2 / q \leq \beta(r, \tilde{r})$. We now consider two cases.
		\begin{itemize}
			\item When $2 / q<\beta(r, \tilde{r})$ and $2 \leq q \leq \infty$:    Young's inequality yields
			$$
			\begin{aligned}
				\left\|(1+|\cdot|)^{-\beta(r, \widetilde{r})} *_t \| g \|_{L_{\kappa_1, x}^{r^{\prime}} L_{\kappa_2, y}^{\widetilde{r}}}\right\|_{L_t^q} & \lesssim\left\|(1+|\cdot|)^{-\beta(r, \widetilde{r})}\right\|_{L_t^q}\|g\|_{L_t^{q^{\prime}} L_{\kappa_1, x}^{r^{\prime}} L_{\kappa_2, y}^{\widetilde{r}}} \\
				& \lesssim\|g\|_{L_t^{q^{\prime}} L_{\kappa_1, x}^{r^{\prime}} L_{\kappa_2, y}^{\widetilde{r}^{\prime}}}.
			\end{aligned}
			$$
			\item 	When $2 / q=\beta(r, \tilde{r})$ with $2<q<\infty$: applying  the Hardy-Littlewood-Sobolev inequality for one-dimension, to get
			$$
			\begin{aligned}
				\left\|(1+|\cdot|)^{- \beta(r, \tilde{r})} *_t \| g \|_{L_{\kappa_1, x}^{r^{\prime}} L_{\kappa_2, y}^{\tilde{r}^{\prime}}}\right\|_{L_t^q} & \lesssim\left\||\cdot|^{- \beta(r, \tilde{r})} *_t \| g \|_{L_{\kappa_1, x}^{r^{\prime}} L_{\kappa_2, y}^{\tilde{r}^{\prime}}}\right\|_{L_t^q} \\
				& \lesssim\|g\|_{L_t^{q^{\prime}} L_{\kappa_1, x}^{\tau_x^{\prime}} L_{\kappa_2, y}^{\tilde{r}^{\prime}}}.
			\end{aligned}
			$$
			\item 	When  $q=\infty$ and $ \beta(r, \widetilde{r})=0$: this implies  that  $r=\widetilde{r}=2$. In this case, the required estimate (\ref{Deq5})   holds trivially by Plancherel's theorem and hence completes the proof of the lemma.
		\end{itemize}
	\end{proof} 
	Now we are in a position to prove  Strichartz estimates for the higher order Dunkl-Schr\"odinger equation in the partial regularity framework.
	
	\begin{proof}[Proof of  Theorem \ref{D Main Dunkl}]
		Susbtituting $e^{i t(-\Delta_\kappa)^{\frac{m}{2}}} f$  into \eqref{DLP} and  using   Minkowski inequality along with  the that fact that $P_j$ can commute with $e^{i t(-\Delta_\kappa)^{\frac{m}{2}}}$,  we conclude that 
		\begin{align}\label{Deq12}\nonumber
			\left\|e^{i t(-\Delta_\kappa)^{\frac{m}{2}}} f\right\|_{L_t^q L_{\kappa_1, x}^r L_{\kappa_2, y}^{\tilde{r}}}^2 &\lesssim\left\| \left( \sum_{j \in \mathbb{Z}} \left|\mathcal{P}_j e^{i t(-\Delta_\kappa)^{\frac{m}{2}}} f\right|^2\right)^{1 / 2}\right\|_{L_t^q L_{\kappa_1, x}^r L_{\kappa_2, y}^{\tilde{r}}}^2 \\
			& \lesssim \sum_{j \in \mathbb{Z}}\left\|e^{i t(-\Delta_\kappa)^{\frac{m}{2}}} \mathcal{P}_j f\right\|_{L_t^q L_{\kappa_1, x}^r L_{\kappa_2, y}^{\tilde{r}}}^2.
		\end{align}
		Using the Dunkl transform and its  inversion formula, we can write 
		$$
		\begin{aligned}
			&e^{i t(-\Delta_\kappa)^{\frac{m}{2}}} \mathcal{P}_j  f(x, y) \\& =\frac{1}{(2 \pi)^d} \int_{\mathbb{R}^{d}}      E_{\kappa}(i(x, y),(\xi, \eta) ) e^{it  (|\xi|^2+|\eta|^2)^{\frac{m}{2}}} \psi_j(\xi, \eta) \mathcal{F}_\kappa{f} ( \xi,  \eta )   h_{\kappa_2}(\eta )   h_{\kappa_1}(\xi) d \xi  d \eta\\
			&=\frac{1}{(2 \pi)^d}  \int_{\mathbb{R}^{d}}      E_{\kappa}(i(x, y),(\xi, \eta) ) e^{it  (|\xi|^2+|\eta|^2)^{\frac{m}{2}}}   \psi(2^{-j}\xi, 2^{-j}\eta) \mathcal{F}_\kappa{f} ( \xi,  \eta )   h_{\kappa_2}(\eta )   h_{\kappa_1}(\xi) d \xi  d \eta.
		\end{aligned}
		$$
		By the  change of variables $2^{-j} \xi \rightarrow \xi$ and $2^{-j} \eta \rightarrow \eta$,  we get 
		$$
		\begin{aligned}
			&	e^{i t(-\Delta_\kappa)^{\frac{m}{2}}} \mathcal{P}_j  f(x, y) \\&= \frac{2^{jN}}{(2 \pi)^d} \int_{\mathbb{R}^{d}}      E_{\kappa}(i(2^jx, 2^jy),(\xi, \eta) ) e^{it (|2^j\xi|^2+|2^j\eta|^2)^{\frac{m}{2}}}   \psi(\xi, \eta) \mathcal{F}_\kappa{f} ( 2^j\xi, 2^j \eta )   h_{\kappa_2}(2^j\eta )   h_{\kappa_1}(2^j\xi) d \xi  d \eta\\
			& = \frac{2^{jN}}{(2 \pi)^d} \int_{\mathbb{R}^{d}}      E_{\kappa}(i(2^jx, 2^jy),(\xi, \eta) ) e^{i2^{mj} t (|2^j\xi|^2+|2^j\eta|^2)^{\frac{m}{2}}}   \psi(\xi, \eta) \mathcal{F}_\kappa{f} ( 2^j\xi, 2^j \eta )   h_{\kappa_2}(\eta )   h_{\kappa_1}(\xi) d \xi  d \eta\\
			&
			=e^{i 2^{mj} t(-\Delta_\kappa)^{\frac{m}{2}}} \mathcal{P}_0 f_j\left(2^j x, 2^j y\right)
		\end{aligned}
		$$
		where $f_j(x, y):=f\left(2^{-j} x, 2^{-j} y\right)$.  Using (\ref{Deq4}) and the above scaling estimates,  we can write 
		\begin{align}\label{eq14}\nonumber
			\left\|e^{i t(-\Delta_\kappa)^{\frac{m}{2}}} \mathcal{P}_j  f\right\|_{L_t^q L_{\kappa_1, x}^r L_{\kappa_2, y}^{\tilde{r}}}^2 
			&   \lesssim 2^{j\left(-\frac{{m}}{q}-\frac{d+2\gamma_1-k}{r}-\frac{k+2\gamma_2}{\tilde{r}}\right)}\left\|e^{i t(-\Delta_\kappa)^{\frac{m}{2}}} \mathcal{P}_0 f_j\right\|_{L_t^q L_{\kappa_1, x}^r L_{\kappa_2, y}^{\tilde{r}}} \\\nonumber
			& \lesssim 2^{j\left(-\frac{{{m}}}{q}-\frac{d+2\gamma_1-k}{r}-\frac{k+2\gamma_2}{\tilde{r}}\right)}\left\|f_j\right\|_{L_{\kappa}^2} \\\nonumber
			& \lesssim  2^{-\frac{jN}{2}}2^{js}2^{j\left(-s-\frac{{{m}}}{q}-\frac{d+2\gamma_1-k}{r}-\frac{k+2\gamma_2}{\tilde{r}}+\frac{d}{2}\right)}\left\|f_j\right\|_{L_{\kappa}^2} \\
			& \leq  2^{j s}\|f\|_{L_{\kappa}^2},
		\end{align}
		where in the last line we used the scaling condition \eqref{Deq2}.  Using the 		quasi-orthogonal relations $P_jP_k=0$ if $|j-k|>1$ and the identity relation,   if we write $P_j f=P_j \widetilde{P}_j f$   with $\widetilde{P}_j=\sum_{k:|j-k| \leq 1} P_k$, then   from    (\ref{Deq12}) and \eqref{eq14}, we can write  
		$$
		\begin{aligned}
			\left\|e^{i t(-\Delta_\kappa)^{\frac{m}{2}}} f\right\|_{L_t^q L_{\kappa_1, x}^r L_{\kappa_2, y}^{\tilde{r}}}^2  
			& \lesssim \sum_{j \in \mathbb{Z}}\left\|e^{i t(-\Delta_\kappa)^{\frac{m}{2}}} \mathcal{P}_j  f\right\|_{L_t^q L_{\kappa_1, x}^r L_{\kappa_2, y}^{\tilde{r}}}^2 \\
			&= \sum_{j \in \mathbb{Z}}\left\|e^{i t(-\Delta_\kappa)^{\frac{m}{2}}} \mathcal{P}_j  \widetilde{P}_j f\right\|_{L_t^q L_{\kappa_1, x}^r L_{\kappa_2, y}^{\tilde{\tau}}}^2\\
			&   \lesssim \sum_{j \in \mathbb{Z}} 2^{2 j s}\left\|\widetilde{P}_j f\right\|_{L_{\kappa}^2}^2 \lesssim\|f\|_{\dot{H}_{\kappa}^s}^2. 
		\end{aligned}
		$$ This completes the proof of the theorem. 
\end{proof}
 Now we consider the particular case $m=1$, that is ,  Strichartz estimates for the Dunkl-wave equation. 
	\begin{theorem}\label{D Main Dunkl wave}
		Let $N_1:=d+2\gamma_1 \geq 2,$ $1 \leq k \leq N_1-1,$ $ 2 \leq \widetilde{r} \leq r<\infty $ and $ 2<q \leq \infty$. If $ q, r, \tilde{r}$ satisfy  
		\begin{align}\label{eq00W}
			\frac{2}{q} \leq(d+2\gamma_1-k-1)\left(\frac{1}{2}-\frac{1}{r}\right)+(k+2\gamma_2)\left(\frac{1}{2}-\frac{1}{\widetilde{r}}\right). 
		\end{align}
		Then we have
		\begin{align}\label{Deq11W}
			\left\|e^{i t(-\Delta_\kappa)^{\frac{1}{2}}} f\right\|_{L_t^q\left(\mathbb{R} ; L_{\kappa_1, x}^r\left(\mathbb{R}^{d-k} ; L_{\kappa_2,y}^{\tilde{r}}\left(\mathbb{R}^k\right)\right)\right)} \lesssim\|f\|_{\dot{H}_{\kappa}^s}
		\end{align}
		under the scaling condition
		\begin{align}\label{Deq2W}
			\frac{{{1}}}{q}=-s +(d+2\gamma_1-k)\left(\frac{1}{2}-\frac{1}{r}\right)+(k+2\gamma_2)\left(\frac{1}{2}-\frac{1}{\widetilde{r}}\right).
		\end{align}
	\end{theorem}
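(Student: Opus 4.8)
The proof will follow exactly the three-stage scheme used for Theorem \ref{D Main Dunkl}: a fixed-time dispersive decay estimate for the frequency-localized wave propagator, a $TT^*$/Hardy--Littlewood--Sobolev argument upgrading it to a frequency-localized Strichartz estimate, and a Littlewood--Paley plus parabolic-scaling argument that passes to $\dot H^s_\kappa$ data. The only new feature is that the wave propagator $e^{it(-\Delta_\kappa)^{1/2}}$ carries a degenerate phase (its relevant Hessian drops one rank compared with the $m\neq1$ case), so every decay exponent that appeared for $e^{it(-\Delta_\kappa)^{m/2}}$ is lowered by $\tfrac12$; this is precisely what the $\alpha=1$ branch of Corollary \ref{fractional}, equivalently the first ($\phi'\neq0$) branch of Lemma \ref{general case}, delivers, and it is what produces the extra $-1$ in the admissibility condition \eqref{eq00W} relative to \eqref{Deq0}.

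First I would prove the wave analogue of Lemma \ref{Deq6}: for $2\le\widetilde r\le r\le\infty$,
$$\left\|e^{it(-\Delta_\kappa)^{1/2}}\mathcal P_0 f\right\|_{L^r_{\kappa_1,x}L^{\widetilde r}_{\kappa_2,y}}\lesssim(1+|t|)^{-\beta_W(r,\widetilde r)}\left\|f\right\|_{L^{r'}_{\kappa_1,x}L^{\widetilde r'}_{\kappa_2,y}},\qquad \beta_W(r,\widetilde r):=(d+2\gamma_1-k-1)\Big(\tfrac12-\tfrac1r\Big)+(k+2\gamma_2)\Big(\tfrac12-\tfrac1{\widetilde r}\Big).$$
By Riesz--Thorin interpolation this reduces to three endpoints. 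The case $r=\widetilde r=2$ is Plancherel. For $r=\widetilde r=\infty$ one writes the propagator as a Dunkl convolution with the kernel \eqref{Deq9} (with $m=1$); since $(|\xi|^2+|\eta|^2)^{1/2}=|(\xi,\eta)|$ and $\psi$, $h_\kappa$ split, Corollary \ref{fractional} with $\alpha=1$ gives $|K(x,y,t)|\lesssim(1+|(x,y,t)|)^{-(N-1)/2}\le(1+|t|)^{-(N-1)/2}$, and since $N-1=(d+2\gamma_1-k-1)+(k+2\gamma_2)=2\beta_W(\infty,\infty)$, Young's inequality finishes this case. For $r=\infty$, $\widetilde r=2$ one takes the partial Dunkl transform in $y$, reducing to the partial kernel $\widetilde K(x,\eta,t)=\tfrac1{c_{\kappa_1}}\int_{\mathbb{R}^{d-k}}E_{\kappa_1}(ix,\xi)e^{it(|\xi|^2+|\eta|^2)^{1/2}}\psi(\xi,\eta)h_{\kappa_1}(\xi)\,d\xi$, which is radial in $x$, and applies the first branch of Lemma \ref{general case} on $\mathbb{R}^{d-k}$ (whose homogeneous dimension is $(d-k)+2\gamma_1=N_1-k\ge1$) with the frozen phase $\rho\mapsto(\rho^2+|\eta|^2)^{1/2}$, $|\eta|\le2$, obtaining $\sup_{|\eta|\le2}|\widetilde K(x,\eta,t)|\lesssim(1+|(x,t)|)^{-(N_1-k-1)/2}$; Plancherel in $y$ and Young's inequality in $x$ then give $\beta_W(\infty,2)=\tfrac{d+2\gamma_1-k-1}{2}\ge0$, using $k\le N_1-1$.

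Given this decay, the frequency-localized Strichartz estimate $\|e^{it(-\Delta_\kappa)^{1/2}}\mathcal P_0 f\|_{L^q_tL^r_{\kappa_1,x}L^{\widetilde r}_{\kappa_2,y}}\lesssim\|f\|_{L^2_\kappa}$ for $2/q\le\beta_W(r,\widetilde r)$, $2\le\widetilde r\le r<\infty$, $2\le q\le\infty$ follows by the same $TT^*$ computation as in the lemma preceding the proof of Theorem \ref{D Main Dunkl}, using Young's inequality when $2/q<\beta_W$, the one-dimensional Hardy--Littlewood--Sobolev inequality when $2/q=\beta_W$, and Plancherel in the remaining endpoint. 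The final step is verbatim the scaling argument of Theorem \ref{D Main Dunkl}: substitute into the Littlewood--Paley bound \eqref{DLP}, use Minkowski and the commutation of $\mathcal P_j$ with the propagator, then the change of variables $2^{-j}(\xi,\eta)\mapsto(\xi,\eta)$ together with the homogeneity $|2^j(\xi,\eta)|=2^j|(\xi,\eta)|$ gives $e^{it(-\Delta_\kappa)^{1/2}}\mathcal P_j f(x,y)=\big(e^{i2^jt(-\Delta_\kappa)^{1/2}}\mathcal P_0 f_j\big)(2^jx,2^jy)$ with $f_j(x,y)=f(2^{-j}x,2^{-j}y)$; rescaling the mixed norms (picking up the weight homogeneities $2\gamma_1,2\gamma_2$), applying the previous step to $\mathcal P_j f=\mathcal P_j\widetilde P_j f$, and invoking the scaling relation \eqref{Deq2W} collapses the powers of $2^j$ to $2^{js}$, so that $\|e^{it(-\Delta_\kappa)^{1/2}}\mathcal P_j\widetilde P_j f\|_{L^q_tL^r_{\kappa_1,x}L^{\widetilde r}_{\kappa_2,y}}\lesssim2^{js}\|\widetilde P_j f\|_{L^2_\kappa}$; squaring, summing in $j$, and using the quasi-orthogonality of $\{\widetilde P_j\}$ gives \eqref{Deq11W}.

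The main obstacle is the endpoint $r=\infty$, $\widetilde r=2$ of the decay estimate: one must extend the radial-Dunkl and stationary-phase mechanism behind Lemma \ref{general case} to the non-homogeneous frozen-frequency phase $(|\xi|^2+|\eta|^2)^{1/2}$ on $\mathbb{R}^{d-k}$, and check that the requisite derivative condition holds uniformly for $|\eta|\le 2$ — in particular treating the regime where $|\eta|$ is comparable to $|\xi|$, in which the phase is nearly parabolic and the support in $|\xi|$ reaches down to the origin, so that the uniform decay rate is governed by the worst case $|\eta|=0$. Once this uniform bound with the reduced homogeneous dimension $(d-k)+2\gamma_1$ and the characteristic one-power loss is secured, the rest is a routine repetition of the corresponding steps in the proof of Theorem \ref{D Main Dunkl}.
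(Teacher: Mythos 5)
Your proposal is correct and follows essentially the same route as the paper: the paper's entire proof of this theorem is the remark that one repeats the argument of Theorem \ref{D Main Dunkl}, replacing the decay rate $\tfrac{N}{2}$ in Lemma \ref{Deq6} by the rate $\tfrac{N-1}{2}$ supplied by the $\alpha=1$ branch of Corollary \ref{fractional}, which is exactly the three-stage scheme (fixed-time decay, $TT^*$ with Hardy--Littlewood--Sobolev, Littlewood--Paley plus scaling) that you carry out. Your discussion of the $r=\infty$, $\widetilde r=2$ endpoint --- in particular the need for uniformity of the frozen-frequency phase $(|\xi|^2+|\eta|^2)^{1/2}$ over $|\eta|\le 2$ --- is in fact more explicit than anything written in the paper.
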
   
	\begin{proof}
			The proof proceeds analogously to that of Theorem \ref{D Main Dunkl} with appropriate modification in Lemma \ref{Deq6}. In this case, however, Corollary \ref{fractional} modifies the decay estimate \eqref{oscillatory}, resulting in a decay rate of $\tfrac{N-1}{2}$.
	\end{proof}

	\section*{Acknowledgement}
	SSM is supported by the DST-INSPIRE Faculty Fellowship DST/INSPIRE/04/2023/002038.   IS is supported by the Institute post doctoral fellowship, Indian Institute of Technology Bombay.    VK is supported by the FWO Odysseus 1 grant G.0H94.18N: Analysis and Partial Differential Equations and the Methusalem program of the Ghent University Special Research Fund (BOF) (Grant number 01M01021). VK is also supported by FWO Senior Research Grant G011522N.

\end{document}